\documentclass[12pt]{amsart}        
\usepackage{latexsym,mathtools}    
\usepackage{mathrsfs}
\usepackage{epsfig,setspace} 
\usepackage{a4}
\usepackage{comment}
\usepackage{amssymb}
\usepackage{amsthm}
\usepackage{amsbsy} 
\usepackage{upgreek}
\usepackage{relsize}
\usepackage{tikz-cd}
\usepackage{ytableau}
\usepackage{amsfonts,amssymb,latexsym,amscd,amsmath,euscript,enumerate,verbatim,calc}
\allowdisplaybreaks
\usepackage{url}
\usepackage{hhline}
\usepackage{pifont}
\usepackage{relsize}
\usepackage[toc,page]{appendix}
\usepackage[backend=biber,maxnames=10]{biblatex}  

\addbibresource{refs.bib}
\DeclareFieldFormat{doi}{\href{https://dx.doi.org/#1}{\textsc{doi}}}
\DeclareFieldFormat{url}{\href{#1}{\textsc{url}}}
\renewbibmacro*{doi+eprint+url}{
  \printfield{doi}
  \newunit\newblock
  \iftoggle{bbx:eprint}{
    \usebibmacro{eprint}
  }{}
  \newunit\newblock
  \iffieldundef{doi}{
    \usebibmacro{url+urldate}}
  {}
}
\ExecuteBibliographyOptions{isbn=false,giveninits=true}
\renewbibmacro{in:}{}
\AtEveryBibitem{
  \ifentrytype{book}
  {\clearfield{pages}}
}

\usepackage[colorlinks=true,linkcolor=blue,citecolor=magenta]{hyperref} 

\theoremstyle{definition}

\newtheorem*{theorem*}{Theorem}
\newtheorem{theorem}{Theorem}[section]   
\newtheorem{lemma}[theorem]{Lemma}
\newtheorem{proposition}[theorem]{Proposition}

\newtheorem{problem}[theorem]{Problem}
 
\newtheorem{corollary}[theorem]{Corollary} 
\newtheorem{definition}[theorem]{Definition}

\newtheorem{remark}[theorem]{Remark} 
\newtheorem{example}[theorem]{Example}

\def\F{{\mathbb F}}
\def\ZZ{{\mathbb Z}}
\def\QQ{{\mathbb Q}}
\def\Fq{{\mathbb F}_q}
\def\MM{{\mathrm M}}
\def\mm{{\mathsf m}}
\def\he{\mathscr{H}}
\makeatletter
\@namedef{subjclassname@2020}{\textup{2020} Mathematics Subject Classification} 
\makeatother

\title[Subspace Profiles, $q$-Whittaker functions and Krylov methods]{Subspace profiles over finite fields and\\ $q$-Whittaker expansions of symmetric functions}       
\author{Samrith Ram} 
\address{Indraprastha Institute of Information Technology Delhi, New Delhi, India.}
\email{samrithram@gmail.com}
\subjclass[2020]{15B33, 05A15, 05A05, 05E05} 
\keywords{finite field, $q$-Whittaker function, symmetric function, Hall-Littlewood function, invariant subspace lattice, Krylov subspace, anti-invariant subspace.}  

\begin{document} 
\begin{abstract}
 Bender, Coley, Robbins, and Rumsey posed the problem of counting the number of subspaces which have a given profile with respect to a linear endomorphism defined on a finite vector space. We settle this problem in full generality by giving an explicit counting formula involving symmetric functions. This formula can be expressed compactly in terms of a Hall scalar product involving dual $q$-Whittaker functions and another symmetric function that is determined by conjugacy class invariants of the endomorphism. As corollaries, we obtain new combinatorial interpretations for the coefficients in the $q$-Whittaker expansions of several symmetric functions. These include the power sum, complete homogeneous, products of modified Hall-Littlewood polynomials, and certain products of $q$-Whittaker functions. These results are used to derive a formula for the number of anti-invariant subspaces (as defined by Barría and Halmos) with respect to an arbitrary operator. We also give an application to an open problem in Krylov subspace theory.    
\end{abstract}
\maketitle
\tableofcontents

\section{Introduction}\label{sec:introduction}

Let $\Fq$ denote the finite field with $q$ elements where $q$ is a prime power. For each positive integer $n$, write $\MM_n(\Fq)$ for the algebra of $n\times n$ matrices over $\Fq$. 
\begin{definition}
Given a matrix $\Delta \in \MM_n(\Fq)$, a subspace $W$ of $\Fq^n$ has $\Delta$-profile $\mu=(\mu_1,\mu_2,\ldots)$ if  
\begin{align*}
  \dim (W+\Delta W+\cdots +\Delta^{j-1}W)=\mu_1+\mu_2+\cdots+\mu_{j} \mbox{ for }j\geq 1.
\end{align*}
  
\end{definition}

Let $\sigma(\mu,\Delta)$ denote the number of subspaces with $\Delta$-profile $\mu$. The $\Delta$-profile of a subspace was referred to as `dimension sequence' by Bender, Coley, Robbins and Rumsey \cite[p. 2]{MR1141317} who proposed the following combinatorial problem.
\begin{problem}\label{prob:main}
  Given $\mu$ and $\Delta$, determine $\sigma(\mu,\Delta)$.
\end{problem}
They showed, by a beautiful probabilistic argument involving Möbius inversion on the lattice of subspaces, that the quantities $\sigma(\mu,\Delta)$ for $\mu$ varying satisfy a system of equations (see Theorem \ref{thm:bendereqns}). They solved these equations in two special cases to obtain elegant product formulas. If $\Delta$ is regular nilpotent (nilpotent with one-dimensional null space), then
\begin{equation}\label{eq:regnilprofiles}
  \sigma(\mu,\Delta) = \prod_{i\geq 2}q^{\mu_i^2}{\mu_{i-1} \brack \mu_i}_q.
\end{equation}
 When $\Delta$ is  simple (has irreducible characteristic polynomial),
  \begin{equation}\label{eq:simpleprofiles}
  \sigma(\mu,\Delta) = \frac{q^n-1}{q^{\mu_1}-1}\prod_{i\geq 2}q^{\mu_i^2-\mu_i}{\mu_{i-1} \brack \mu_i}_q.
\end{equation}
They remarked that these formulas do not appear to have simple counting proofs. 

 A few years later, Niederreiter \cite[p. 11]{MR1334623} proposed the following problem in connection with his multiple recursive matrix method for pseudorandom number generation.
\begin{problem}
  Let $m,\ell$ be positive integers. Given an element $\alpha\in \F_{q^{m\ell}}$ such that $\F_{q^{m\ell}}=\Fq(\alpha)$, determine the number of $m$-dimensional $\Fq$-subspaces $W$ of $\F_{q^{m\ell}}$  such that
  \begin{align*}
    \F_{q^{m\ell}}=W\oplus \alpha W\oplus \cdots \oplus \alpha^{\ell-1}W.
  \end{align*}
\end{problem} 
It is not difficult to see that Niederreiter's problem is equivalent to determining $\sigma(\mu,\Delta)$ in the case where $\mu$ is a partition of $m\ell$ with all parts equal to $m$ and $\Delta$ is an $m\ell\times m\ell$ matrix with irreducible characteristic polynomial. However, Niederreiter does not seem to have been aware of the results of the authors of \cite{MR1141317} and the same is true for several subsequent papers which discuss the problem of Niederreiter. Niederreiter's problem has interesting connections with finite projective geometry and group theory  \cite{split}. Several special instances of Problem~\ref{prob:main} have been solved in the literature \cite{MR2831705,split,MR4448290,MR4263652,MR3093853,MR4682040,MR4797454,MR4555237,ram2023diagonal}. The consideration of subspace profiles when $\Delta$ is a regular diagonal operator has recently led to a new proof of the Touchard-Riordan formula concerned with crossings of chord diagrams \cite{MR4555237}. In the special case where $\Delta$ is a diagonal matrix, the quantity $\sigma(\mu,\Delta)$ can be expressed as a sum over semistandard tableaux. The study of subspace profiles in this case has interesting connections with multiset Mahonian statistics, $q$-Whittaker symmetric functions and the $q$-rook theory of Garsia and Remmel. We refer to \cite{ram2023diagonal} for these results on diagonal operators. 
 
In this paper we solve Problem \ref{prob:main} in full generality by giving an explicit formula for $\sigma(\mu,\Delta)$ for arbitrary $\mu$ and $\Delta$. The fact that the theory of symmetric functions can be leveraged to answer the counting problem is a very recent development. In addition to the results of Bender, Coley, Robbins and Rumsey, our primary tools are ideas from the theory of symmetric functions and some previous results for diagonal operators. We show that Problem \ref{prob:main} admits a compact solution involving $q$-Whittaker functions, a class of symmetric functions that occur as specializations of Macdonald polynomials. The $q$-Whittaker functions occur as joint eigenfunctions of $q$-deformed Toda chain Hamiltonians with support in the positive Weyl chamber (see Etingof \cite{MR1729357} or Ruijsenaars \cite{MR1090424}). The following theorem (appearing later as Theorem \ref{thm:allprofiles}) is our main result.   
\begin{theorem}\label{thm:main}
For each partition $\mu$, 
\begin{align*}
 \sigma(\mu,\Delta)=(-1)^{\sum_{j\geq 2}\mu_j} q^{\sum_{j\geq 2}{\mu_j \choose 2}}\langle  F_\Delta(x),\widetilde{W}_{\mu}(x;q)\, h_{n-|\mu|} \rangle,
\end{align*}
for each prime power $q$ and each matrix $\Delta\in \MM_n(\Fq)$.
\end{theorem}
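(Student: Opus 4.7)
My first task would be to identify the symmetric function $F_\Delta(x)$ attached to the operator $\Delta$. Since $\sigma(\mu,\Delta)$ depends only on the conjugacy class of $\Delta$, and that conjugacy class over $\Fq$ is recorded by a function assigning a partition $\lambda^{(f)}$ to each monic irreducible $f\in\Fq[x]$ (with $\sum_f |\lambda^{(f)}|\deg f = n$), I expect $F_\Delta$ to factor as a product of building blocks indexed by these primary invariants. Since the abstract advertises a combinatorial interpretation of the $q$-Whittaker expansion of products of modified Hall-Littlewood polynomials, the natural candidate is to take $F_\Delta$ to be such a product, with one factor of modified Hall-Littlewood type $\widetilde{H}_{\lambda^{(f)}}$ (with an appropriate plethystic substitution involving $q^{\deg f}$) for each irreducible $f$; this distills exactly the conjugacy data of $\Delta$ into a single symmetric function.

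The two main inputs I would combine are: (i) the Bender, Coley, Robbins and Rumsey equations (Theorem \ref{thm:bendereqns}), which provide linear relations among the numbers $\sigma(\mu,\Delta)$ for varying $\mu$ and relate them to simpler quantities such as counts of $\Delta$-invariant subspaces, and (ii) the diagonal case from \cite{ram2023diagonal}, which establishes the formula when $\Delta$ is diagonal, i.e.\ when every $\lambda^{(f)}$ is a single column and each $\deg f = 1$. The plan is to use (ii) as a template and bootstrap to arbitrary conjugacy classes through (i) together with symmetric-function manipulations, with the Hall inner product doing the bookkeeping across primary components.

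Concretely, I would first verify the theorem in the base case $\mu = (k)$: here the sign and $q$-power prefactors both reduce to $1$, the left-hand side is the number of $k$-dimensional $\Delta$-invariant subspaces (since the chain $W\subseteq W+\Delta W\subseteq\cdots$ stabilizes after one step), and the right-hand side becomes $\langle F_\Delta,\widetilde{W}_{(k)}\,h_{n-k}\rangle$. The base case is therefore tantamount to a generating function identity for invariant subspace counts in terms of the conjugacy invariants of $\Delta$, which both fixes the correct choice of $F_\Delta$ and is consistent with the diagonal specialization from \cite{ram2023diagonal}. For general $\mu$, I would use the BCRR system to express $\sigma(\mu,\Delta)$ as an alternating combination of invariant-subspace counts on various subquotients of $\Fq^n$ determined by $\mu$ and $\Delta$; on the symmetric function side, this recursion should translate into a corresponding expansion of $\widetilde{W}_\mu\,h_{n-|\mu|}$ as an alternating combination of terms $\widetilde{W}_{(k)}\,h_{n-k}$, with precisely the sign $(-1)^{\sum_{j\geq 2}\mu_j}$ and $q$-power $q^{\sum_{j\geq 2}\binom{\mu_j}{2}}$ appearing in the statement.

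The principal obstacle, as I anticipate it, is showing that the BCRR Möbius inversion on the subspace side corresponds, under the Hall pairing with $F_\Delta$, to the specific sign and $q$-binomial prefactor that show up in Theorem \ref{thm:main}. This should reduce to a purely symmetric-function identity for $\widetilde{W}_\mu\,h_{n-|\mu|}$ that is independent of $\Delta$ and whose proof likely requires careful manipulation of modified Hall-Littlewood expansions together with the Macdonald specializations underlying the $q$-Whittaker basis. Once this identity is secured, the theorem follows by combining it with the base-case invariant-subspace count and the BCRR recursion, specialized to an arbitrary conjugacy class.
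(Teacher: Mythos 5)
The overall architecture you describe --- identify $F_\Delta$ as a product of plethystically twisted modified Hall--Littlewood polynomials indexed by the primary invariants of $\Delta$, anchor the computation in the diagonal case, and propagate with the Bender--Coley--Robbins--Rumsey (BCRR) equations --- matches the paper's strategy in broad strokes, and the Pieri-type identity you anticipate for $\widetilde{W}_\mu\, h_{n-|\mu|}$ is indeed the final step (the paper writes $\widetilde{W}_\mu h_{n-|\mu|}=\sum_{\eta\vdash n}\psi_{\eta/\mu}(t)\widetilde{W}_\eta$ via the Hall--Littlewood Pieri rule). But your recursion runs in the wrong direction, and this is a genuine gap. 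You propose to take the single-row cases $\mu=(k)$ (counts of $k$-dimensional invariant subspaces) as base cases and solve the BCRR system upward to general $\mu$. A dimension count shows this cannot work: the unknowns $\sigma(\mu,\Delta)$ for $|\mu|\le n$ number $\sum_{m\le n}p(m)$, while BCRR supplies only $\sum_{m<n}p(m)$ equations, so the $n+1$ single-row values are too few to close the system as soon as $p(n)>n+1$. What the paper actually uses as boundary data is the full set of values $\sigma(\mu,\Delta)$ for \emph{all} $\mu\vdash n$; with those known, the BCRR equations become a square, nonsingular system in the $|\mu|<n$ unknowns, whose unique solution is $c_{\eta\mu}=\psi_{\eta/\mu}$. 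Relatedly, the expansion you want is of $\widetilde{W}_\mu h_{n-|\mu|}$ in the basis $\{\widetilde{W}_\eta:\eta\vdash n\}$, not as a combination of the terms $\widetilde{W}_{(k)}h_{n-k}$.

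The missing idea is therefore the proof for full profiles $|\mu|=n$ and arbitrary $\Delta$, which is the crux (Section 4 of the paper). The mechanism there is: (a) a Chen--Tseng-type recursion shows a priori that $\sigma(\mu,\Delta)=\sum_{\lambda\vdash n}g_{\mu\lambda}(q)X_\lambda(\Delta)$ with coefficients independent of $\Delta$, so the candidate formula need only be verified on a family of test matrices whose flag data span; (b) diagonal matrices of every type $\nu\vdash n$ suffice because the matrix $\bigl(b_{\mu\nu}(q)\bigr)$ of diagonal-case profile counts is triangular with nonzero diagonal in dominance order, hence nonsingular; (c) the verification on diagonal matrices reduces to the $q$-Whittaker expansion $\prod_i W_{(\nu_i)}=\sum_{\mu}(1-t)^{|\mu|-\mu_1}b_{\mu\nu}(t)W_\mu$, proved by plethystic duality. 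None of (a)--(c) appears in your outline, and without it the proposed bootstrap from the diagonal case to arbitrary conjugacy classes has no starting point of the right size.
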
 
Here $\langle \cdot,\cdot \rangle$ denotes the Hall scalar product while $h_\lambda$ and $\widetilde{W}_\lambda$ denote the complete homogeneous and dual (with respect to the Hall scalar product) $q$-Whittaker symmetric functions  respectively. In addition, $F_\Delta(x)$ is a symmetric function depending on conjugacy class invariants of the matrix $\Delta$ that can be expressed in terms of plethystic substitutions involving modified Hall-Littlewood polynomials (see Proposition \ref{prop:ifgf}). The ideas used in the proof of Theorem \ref{thm:main} are elementary and combinatorial; the arguments do not use representation theoretic or geometric techniques. 

We show that several symmetric functions such as the power sum symmetric functions, the complete homogeneous symmetric functions, products of modified Hall-Littlewood polynomials, and certain products of $q$-Whittaker functions are of the form $F_\Delta$ for suitably chosen $\Delta$ (see Section \ref{sec:fdelta}). In the case where $\mu$ is a partition of $n$, the theorem above entails new combinatorial interpretations of the coefficients in the $q$-Whittaker expansion of each of these symmetric functions (Corollary \ref{cor:comb}). 

In Section \ref{sec:partialprofiles}, we use our results to derive an explicit formula for the number of anti-invariant subspaces with respect to a linear operator, a notion that goes back to Barría and Halmos \cite{MR748946}. An application of our results to Krylov subspace theory is considered in Section \ref{sec:krylov}. Given positive integers $k$ and $\ell$, we assign a probability $\psi_{k,\ell}(\Delta)$ to each matrix $\Delta\in \MM_n(\Fq)$ (see Equation \eqref{eq:defpsi}). Obtaining bounds on the probability $\psi_{k,\ell}(\Delta)$ is a difficult and important problem (Brent, Gao and Lauder \cite[p.\ 277]{MR1982139}) whose resolution is critical for understanding and evaluating the efficiency of several algorithms that rely on Krylov subspace methods. Examples of such algorithms include the Number Field Sieve, used in integer factorization, and the block Wiedemann algorithm of Coppersmith \cite{MR1192970}. In general, such algorithms have applications in many mathematical areas, including quadrature methods, the analytic theory of continued fractions, expansions of infinite series, orthogonalization algorithms, and the mathematical underpinnings of quantum mechanics (Liesen and Strakoš \cite[p.\ 8]{MR3024841}). In Theorem \ref{thm:krylov}, we derive an explicit formula for the probability $\psi_{k,\ell}(\Delta)$ in terms of dual $q$-Whittaker functions.

This article is organized as follows. In Section \ref{sec:fdelta}, we begin with a brief overview of the algebra of symmetric functions and its various bases. We show how to associate a symmetric function $F_\Delta$ with each matrix $\Delta$ and give several examples. We also prove the existence of a formula which expresses $\sigma(\mu,\Delta)$ in terms of $F_\Delta$. In Section \ref{sec:diagonal}, we consider diagonal operators and $q$-Whittaker functions which will later play an integral part in the derivation of a formula for $\sigma(\mu,\Delta)$. Section~\ref{sec:fullprofiles} is concerned with a formula for $\sigma(\mu,\Delta)$ when $\mu$ is a partition of the ambient vector space dimension. This formula is derived by setting up a system of equations and using a linear algebraic approach. We also give combinatorial finite-field interpretations for coefficients in the $q$-Whittaker expansions of various symmetric functions. Finally, in Section \ref{sec:allprofiles}, we use the equations of Bender, Coley, Robbins and Rumsey to determine $\sigma(\mu,\Delta)$ for arbitrary $\mu$ and $\Delta$. Applications to anti-invariant subspaces and Krylov subspace methods are considered in Sections \ref{sec:partialprofiles} and \ref{sec:krylov} respectively.   

\section{Generating function for flags of invariant subspaces}\label{sec:fdelta}

We begin with a brief overview of symmetric functions, focusing on some relevant facts; the primary  references are Macdonald~\cite{MR1354144} and Stanley \cite[Chap. 7]{MR1676282}. A weak composition of an integer $n$ is a sequence $\alpha=(\alpha_1,\alpha_2,\ldots)$ of nonnegative integers with sum $n$. A partition of $n$ is a weak composition of $n$ in which the sequence of integers is weakly decreasing. If $\lambda$ is a partition of $n$, we write $\lambda \vdash n$. Nonzero terms in the sequence $\lambda$ are called parts of $\lambda$ and the number of parts of $\lambda$ is denoted $\ell(\lambda)$. It is customary to ignore trailing zeroes in weak compositions and partitions. For instance, the weak compositions $(3,1,2,0,1,0,0,\ldots)$, $(3,1,2,0,1,0)$ and $(3,1,2,0,1)$ are all considered equivalent.  For a weak composition $\alpha$, write $|\alpha|$ for the sum $\sum_{i\geq 1}\alpha_i$. 

Let $\QQ(t)$ denote the field of rational functions in an indeterminate $t$. Denote by $\Lambda_{\QQ(t)}$ the algebra of formal symmetric functions in infinitely many variables $x=(x_1,x_2,\ldots)$ with coefficients in $\QQ(t)$. The algebra $\Lambda_{\QQ(t)}$ admits several natural bases indexed by integer partitions: the monomial symmetric functions $m_\lambda$, the elementary symmetric functions $e_\lambda$, the power sum symmetric functions $p_\lambda$, the complete homogeneous symmetric functions $h_\lambda$ and the Schur functions $s_\lambda$. The ring of symmetric functions is also equipped with an involutory automorphism $\omega$ which satisfies
\begin{align*}
\omega e_\lambda= h_\lambda;\quad \omega h_\lambda= e_\lambda;\quad \omega s_\lambda= s_{\lambda'}.
\end{align*}
Here, and throughout this paper, $\lambda'$ denotes the partition conjugate to $\lambda$.

The $q$-Whittaker functions $W_\lambda(x;t)$ and Hall-Littlewood polynomials $P_\lambda(x;t)$ are two more bases of $\Lambda_{\QQ(t)}$. Both occur as specializations of a more general class of two-parameter symmetric functions, the Macdonald polynomials.

The ring of symmetric functions is endowed with the Hall scalar product $\langle \cdot , \cdot\rangle$ with respect to which the bases $m_\lambda$ and $h_\lambda$ are dual. The dual basis of the Hall-Littlewood polynomials $P_\lambda(x;t)$ with respect to the Hall scalar product consists of the transformed Hall-Littlewood polynomials $H_{\lambda}(x;t)$. They are related to the $q$-Whittaker functions (Bergeron \cite[p. 5]{bergeron2020survey}) by
\begin{align}
\omega H_{\lambda'}(x;t)=W_{\lambda}(x;t).\label{eq:hisomegaw}
\end{align}

The modified Hall-Littlewood polynomials are indexed by integer partitions and are defined by $  \tilde{H}_{\lambda}(x;t)=t^{n(\lambda)}  H_{\lambda}(x;t^{-1}),$ where $n(\lambda)=\sum_{i\geq 1}(i-1)\lambda_i$. The modified Hall-Littlewood polynomial also satisfies
\begin{align*}
  \tilde{H}_\lambda(x;t)=\sum_{\mu}\tilde{K}_{\mu\lambda}(t)s_\mu,
\end{align*}
where $s_\mu$ denotes a Schur function and $\tilde{K}_{\mu\lambda}(t)$ is a modified Kostka-Foulkes polynomial, defined as the generating polynomial of the cocharge statistic on semistandard tableaux of shape $\mu$ and content $\lambda$:
\begin{align*}
  \tilde{K}_{\mu\lambda}(t)=\sum_{\mathcal{T} \in {\rm SSYT}(\mu,\lambda)}t^{\rm cocharge{(\mathcal{T})}}.
\end{align*}
For prime power $q$, the specialization $\tilde{K}_{\lambda\mu}(q)$ coincides with the value taken by the character of the irreducible unipotent ${\rm GL}_n(\Fq)$-representation indexed by $\lambda$ on the unipotent conjugacy class with Jordan form partition $\mu$ (Lusztig \cite[Eq.~2.2]{MR641425}).

For a formal power series $F$, denote by $p_k[F]$ the series obtained by substituting each indeterminate appearing in $F$ by its $k$th power. For an arbitrary symmetric function $g\in \Lambda_{\QQ(t)}$, the \emph{plethystic substitution} $g[F]$ is the series obtained by writing $g$ as a polynomial in the power sum symmetric functions $p_r$ and then substituting $p_r[F]$ for each $p_r$ appearing in $g$. The reader is referred to Haglund \cite[p. 19]{MR2371044} or Haiman \cite[p. 13]{MR2051783} for further details on plethystic notation.

 For each nonnegative integer $n$, define the $q$-analogs 
$[n]_q:=1+q+\cdots+q^{n-1}\mbox{ and }[n]_q!:=[1]_q[2]_q\cdots [n]_q.$ For a weak composition $\alpha$ of $n$, the $q$-multinomial coefficient is defined by
\begin{align*}
  {n \brack \alpha}_q:=\frac{[n]_q!}{\prod_{i\geq 1}[\alpha_i]_q!}.
\end{align*}

Our objective, in this section, is to show that $\sigma(\mu,\Delta)$ can be expressed in terms of suitably defined flags of invariant subspaces of $\Delta$. We begin with a definition.
\begin{definition}
Given a matrix $\Delta\in \MM_n(\Fq)$ and a weak composition $\alpha=(\alpha_1,\ldots,\alpha_\ell)$ of $n$, let $X_{\alpha}(\Delta)$ denote the number of flags $  (0)=W_0\subseteq W_1\subseteq \cdots \subseteq W_\ell=\Fq^n$ of $\Delta$-invariant subspaces satisfying $\dim W_i/W_{i-1}=\alpha_i$ for $1\leq i\leq r$.
\end{definition}
\begin{example}\label{eg:scalar}
If $\Delta=cI$ where $I$ denotes the $n\times n$ identity matrix and $c\in \Fq$, then $X_\alpha(\Delta)={n \brack \alpha}_q,$ a $q$-multinomial coefficient.
\end{example}

Let $\mathcal{L}(\Delta)$ denote the lattice of $\Delta$-invariant subspaces of $\Fq^n$. It is well known that the lattice $\mathcal{L}(\Delta)$ is self-dual \cite{MR213378}. It follows easily from this fact that every interval in $\mathcal{L}(\Delta)$ is self-dual. The next proposition shows that $X_\alpha(\Delta)$ depends only on the integer partition obtained by sorting the coordinates of $\alpha$ in weakly decreasing order.

\begin{proposition}\label{prop:sortparts}
If $\alpha$ and $\alpha'$ are weak compositions such that the coordinates of $\alpha'$ are a permutation of the coordinates of $\alpha$, then $X_\alpha(\Delta)=X_{\alpha'}(\Delta).$ 
\end{proposition}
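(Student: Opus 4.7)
My plan is to reduce to the case of an adjacent transposition of two entries and then invoke the self-duality of intervals in $\mathcal{L}(\Delta)$. After padding $\alpha$ and $\alpha'$ with trailing zeros so that both have the same length, the hypothesis says that the entries of $\alpha'$ are a permutation of those of $\alpha$; since adjacent transpositions generate the symmetric group, it suffices to establish the proposition when $\alpha'$ is obtained from $\alpha$ by exchanging $\alpha_i$ and $\alpha_{i+1}$. Set $a=\alpha_i$ and $b=\alpha_{i+1}$. I would group the flags counted by $X_\alpha(\Delta)$ according to the partial flag $(W_0,\dots,W_{i-1},W_{i+1},\dots,W_\ell)$ obtained by deleting $W_i$, and similarly for $X_{\alpha'}(\Delta)$. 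Since the dimension constraints on the partial flag involve only the sum $a+b=\dim W_{i+1}/W_{i-1}$, the outer summation runs over the same set of partial flags on both sides, and the proposition reduces to the equality
\begin{align*}
\#\{W\in\mathcal{L}(\Delta):U\subseteq W\subseteq V,\,\dim W/U=a\}=\#\{W\in\mathcal{L}(\Delta):U\subseteq W\subseteq V,\,\dim W/U=b\}
\end{align*}
for every interval $[U,V]$ of $\mathcal{L}(\Delta)$ with $\dim V/U=a+b$.

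For this local equality I would pass to the quotient $V/U$ equipped with the induced operator $\bar{\Delta}$; the assignment $W\mapsto W/U$ is a lattice isomorphism from $[U,V]$ onto $\mathcal{L}(\bar{\Delta})$ that translates the quantity $\dim W/U$ into the dimension of the image inside $V/U$. By the self-duality cited in the text, $\mathcal{L}(\bar{\Delta})$ admits an anti-automorphism; tracing its standard construction through $W\mapsto W^\perp$ followed by an identification of $\mathcal{L}(\bar{\Delta}^T)$ with $\mathcal{L}(\bar{\Delta})$ coming from a similarity between $\bar{\Delta}^T$ and $\bar{\Delta}$, one sees that this anti-automorphism sends $a$-dimensional subspaces to $(\dim(V/U)-a)$-dimensional ones, i.e.\ to $b$-dimensional ones. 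Restricting this bijection to the relevant dimensions yields the required equality and finishes the proof.

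The step I expect to require the most care is the dimension-reversing property in the last paragraph. Because $\mathcal{L}(\bar{\Delta})$ is not in general graded by subspace dimension (for example when $\bar{\Delta}$ has an irreducible factor, so that its invariant subspace lattice is a two-element chain containing no subspace of intermediate dimension), this property cannot be deduced from the order-reversing nature of the anti-automorphism on its own; it must be read off from the explicit orthogonal-complement construction of the duality, which is what guarantees that dimensions are sent to codimensions.
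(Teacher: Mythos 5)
Your proof is correct and follows essentially the same route as the paper's: reduce to an adjacent transposition and invoke the self-duality of the interval $[W_{i-1},W_{i+1}]$ (equivalently, of $\mathcal{L}(\bar{\Delta})$ on the quotient $W_{i+1}/W_{i-1}$) to match invariant subspaces of relative dimension $a$ with those of relative dimension $b$. Your closing observation---that the dimension-reversing property must be read off from the explicit annihilator-plus-similarity construction of the duality, since the lattice is not graded by dimension and order-reversal alone would not suffice---is a worthwhile precision that the paper's one-line appeal to self-duality leaves implicit.
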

\begin{proof}
We may assume that the permutation taking $\alpha$ to $\alpha'$ involves only finitely many coordinates.  Since each such permutation is obtained by a series of transpositions of adjacent elements, it suffices to prove the proposition when $\alpha'$ is obtained from $\alpha$ by swapping two adjacent coordinates, say $\alpha_i$ and $\alpha_{i+1}$. Let $\mathcal{F}_\alpha(\Delta)$ denote the collection of all flags counted by $X_\alpha(\Delta)$. We will construct an explicit bijection between $\mathcal{F}_\alpha(\Delta)$ and $\mathcal{F}_{\alpha'}(\Delta)$. Consider a flag
  $$(0)= W_0\subseteq \cdots \subseteq W_{i-1}\subseteq W_i\subseteq W_{i+1}\subseteq\cdots \subseteq W_\ell=\Fq^n$$
  in $\mathcal{F}_\alpha(\Delta)$. The interval $[W_{i-1},W_{i+1}]\subseteq \mathcal{L}(\Delta)$ is self-dual and $\dim W_{i+1}/W_{i-1}=\alpha_i+\alpha_{i+1}$. Self-duality implies the existence of a bijection $\phi:[W_{i-1},W_{i+1}]\to [W_{i-1},W_{i+1}]$  which takes each invariant subspace $U$ with $\dim U/W_{i-1}=\alpha_i$ to a unique invariant subspace $U'$ satisfying $\dim U'/W_{i-1}=\alpha_{i+1}$. The map
  \begin{align*}
    (W_0,\ldots, W_{i-1}, W_i, W_{i+1},\ldots, W_\ell)\mapsto     (W_0,\ldots, W_{i-1}, \phi(W_i), W_{i+1},\ldots, W_\ell)
  \end{align*}
  is easily seen to be a bijection between $\mathcal{F}_\alpha(\Delta)$ and $\mathcal{F}_{\alpha'}(\Delta)$.
\end{proof}
To study flags of invariant subspaces it will be convenient to introduce a generating function in infinitely many variables $x=(x_1,x_2,\ldots)$.
\begin{definition}\label{def:ifgf}
For $\Delta\in \MM_n(\Fq)$, the \emph{invariant flag generating function} $F_{\Delta}(x)$ is defined by 
 \begin{align*}
   F_{\Delta}(x):=\sum_{\alpha }X_{\alpha}(\Delta)x^{\alpha},
 \end{align*}
where the sum is taken over all inequivalent weak compositions $\alpha$ of $n$ and $x^{\alpha}$ denotes the product $x_1^{\alpha_1}x_2^{\alpha_2}\cdots$.
\end{definition}

It is important to take the sum over inequivalent weak compositions of $n$ since contributions from compositions obtained from $\alpha$ by padding zeroes would make the coefficient of $x^\alpha$ infinite. The following result is an easy consequence of Proposition~\ref{prop:sortparts} and the definition of the monomial symmetric function $m_\lambda$.

\begin{proposition}
For each matrix $\Delta \in \MM_n(\Fq)$,
\begin{align*}
  F_\Delta(x)=\sum_{\lambda}X_{\lambda}(\Delta)m_{\lambda},
 \end{align*}
where the sum is taken over all partitions $\lambda$ of $n$.
\end{proposition}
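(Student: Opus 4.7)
The goal is to reindex the defining sum for $F_\Delta(x)$ by integer partitions rather than by weak compositions, turning it into a $\mathbb{Z}$-linear combination of monomial symmetric functions. The plan is to use Proposition~\ref{prop:sortparts} to collapse all weak compositions whose nonzero entries give the same partition into a single coefficient, and then to recognize the remaining sum as a monomial symmetric function.

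First I would recall that in infinitely many variables, the monomial symmetric function $m_\lambda$ is by definition
\begin{align*}
m_\lambda(x) = \sum_{\alpha} x^\alpha,
\end{align*}
where $\alpha$ ranges over all distinct weak compositions (taken up to trailing zeros) whose nonzero coordinates, sorted in weakly decreasing order, yield $\lambda$. Equivalently, such $\alpha$ are obtained from $\lambda$ by permuting its parts among the positions $1,2,3,\ldots$ and filling the unused positions with $0$'s.

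Next I would note that for any weak composition $\alpha$ of $n$ whose nonzero entries rearrange to the partition $\lambda$, Proposition~\ref{prop:sortparts} gives $X_\alpha(\Delta) = X_\lambda(\Delta)$. The proposition is stated for permutations of coordinates, but this also covers insertion or deletion of internal zeros: a zero coordinate $\alpha_i = 0$ in a flag forces the trivial step $W_i = W_{i-1}$, which adds no new choices and which can be moved past any adjacent coordinate by the transposition argument of the proposition. Thus $X_\alpha(\Delta)$ depends only on the partition $\lambda$ associated with $\alpha$.

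Grouping the sum in Definition~\ref{def:ifgf} by this partition, I would then write
\begin{align*}
F_\Delta(x) = \sum_{\alpha} X_\alpha(\Delta)\, x^\alpha
           = \sum_{\lambda \vdash n} X_\lambda(\Delta) \sum_{\alpha \leftrightarrow \lambda} x^\alpha
           = \sum_{\lambda \vdash n} X_\lambda(\Delta)\, m_\lambda(x),
\end{align*}
where $\alpha \leftrightarrow \lambda$ means that the nonzero coordinates of $\alpha$ rearrange to $\lambda$. This is the claim. There is no genuine obstacle here; the only thing to be careful about is the bookkeeping surrounding the equivalence relation on weak compositions (i.e.\ that trailing zeros are suppressed while internal zeros are permitted), which is handled cleanly once Proposition~\ref{prop:sortparts} is extended to zero-insertions as above.
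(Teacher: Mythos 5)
Your proposal is correct and follows exactly the route the paper intends: the paper gives no separate proof, stating only that the result is an easy consequence of Proposition~\ref{prop:sortparts} and the definition of $m_\lambda$, which is precisely the grouping argument you carry out. Your extra care about internal zeros is sound (and already covered by Proposition~\ref{prop:sortparts}, since permuting the coordinates of the infinite sequence moves zeros past nonzero entries), so nothing further is needed.
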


\begin{example}\label{eg:simple}
  If $\Delta\in \MM_n(\Fq)$ is \emph{simple} (it has irreducible characteristic polynomial), then the only $\Delta$-invariant subspaces are the zero subspace and $\Fq^n$. Therefore $F_\Delta(x)=\sum_{i\geq 1}x_i^n=p_n$, the power sum symmetric function.
\end{example}
\begin{example}\label{eg:regularnilpotent}
  If $\Delta\in \MM_n(\Fq)$ is regular nilpotent (nilpotent with one-dimensional null space), then there is precisely one $\Delta$-invariant subspace of dimension $k$ for each $0\leq k\leq n$. In this case $F_\Delta(x)=\sum_{\alpha}x^\alpha=h_n$, the complete homogeneous symmetric function.
\end{example}

The following lemma follows easily from Kirillov \cite[Eq. 0.4]{MR1768934} where it is stated for unipotent operators.
\begin{lemma}\label{lem:nilflags} 
The invariant flag generating function of a nilpotent matrix over $\Fq$ with Jordan form partition $\lambda$ is the modified Hall-Littlewood polynomial $\tilde{H}_{\lambda}(x;q)$.
\end{lemma}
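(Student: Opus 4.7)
My plan is to reduce the nilpotent case to the unipotent setting and then derive the identity from the character-theoretic interpretation of $\tilde{K}_{\mu\lambda}(q)$ already recorded in the excerpt. First, if $N\in \MM_n(\Fq)$ is nilpotent with Jordan type $\lambda$, then $u=I+N$ is unipotent with the same Jordan type, and a subspace $W\subseteq \Fq^n$ is $N$-invariant if and only if it is $u$-invariant, since $N$ and $u$ generate the same commutative subalgebra of $\MM_n(\Fq)$. Hence $F_N(x)=F_u(x)$, and it suffices to prove $F_u(x)=\tilde{H}_\lambda(x;q)$ for a unipotent $u$ of Jordan type $\lambda$.

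Next, I would express $X_\alpha(u)$ as a permutation character value. The set of flags of $\Fq^n$ with successive dimension jumps $\alpha$ is the homogeneous space $\mathrm{GL}_n(\Fq)/P_\alpha$, where $P_\alpha$ is the standard parabolic of type $\alpha$, so counting $u$-fixed flags amounts to evaluating the permutation character $\mathrm{Ind}_{P_\alpha}^{\mathrm{GL}_n(\Fq)} 1$ at $u$. A classical decomposition (due to Steinberg) writes this permutation character as $\sum_\mu K_{\mu\alpha}\chi^\mu$, where $\chi^\mu$ is the irreducible unipotent character indexed by $\mu$ and $K_{\mu\alpha}$ is a Kostka number. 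Combining with the Lusztig identity $\chi^\mu(u)=\tilde{K}_{\mu\lambda}(q)$ cited in the excerpt gives
$$X_\alpha(u)=\sum_\mu K_{\mu\alpha}\,\tilde{K}_{\mu\lambda}(q).$$

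Finally, I would assemble the generating function. Inserting the Schur expansion $\tilde{H}_\lambda(x;q)=\sum_\mu \tilde{K}_{\mu\lambda}(q)s_\mu$ from the excerpt, and then the monomial expansion $s_\mu=\sum_\nu K_{\mu\nu}m_\nu$, yields $\tilde{H}_\lambda(x;q)=\sum_\nu X_\nu(u)m_\nu$, which is exactly $F_u(x)$ by the monomial expansion of $F_\Delta$ proved earlier in Section~\ref{sec:fdelta}. The main obstacle I foresee is pinning down the parabolic permutation character decomposition with conventions matching those used in the excerpt for indexing unipotent characters, since sign twists and conjugate partitions are easy to misplace; once the conventions are aligned, the rest is a bookkeeping identity. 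An alternative route that avoids character theory would be to argue directly with Hall polynomials: the number of $N$-invariant subspaces of each prescribed Jordan type is a Hall polynomial evaluated at $q$, so $X_\alpha(u)$ is a sum of products of Hall polynomials, and one can then appeal to the isomorphism between the classical Hall algebra and the Hall-Littlewood subalgebra of $\Lambda_{\QQ(t)}$.
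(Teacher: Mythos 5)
Your argument is correct, and it is genuinely different from what the paper does: the paper gives no proof at all, simply observing that the statement ``follows easily from Kirillov [Eq.~0.4], where it is stated for unipotent operators,'' so the only content the paper supplies is the implicit reduction from nilpotent $N$ to unipotent $u=I+N$ (your first step, which is right, since $NW\subseteq W$ iff $(I+N)W\subseteq W$). You instead reprove Kirillov's identity from scratch: fixed-point counting on $\mathrm{GL}_n(\Fq)/P_\alpha$ gives $X_\alpha(u)=\bigl(\mathrm{Ind}_{P_\alpha}^{\mathrm{GL}_n(\Fq)}1\bigr)(u)$, Steinberg's decomposition $\mathrm{Ind}_{P_\alpha}^{\mathrm{GL}_n(\Fq)}1=\sum_\mu K_{\mu\alpha}\chi^\mu$ together with Lusztig's $\chi^\mu(u)=\tilde{K}_{\mu\lambda}(q)$ gives $X_\alpha(u)=\sum_\mu K_{\mu\alpha}\tilde{K}_{\mu\lambda}(q)$, and comparing with $\tilde{H}_\lambda(x;q)=\sum_\mu\tilde{K}_{\mu\lambda}(q)s_\mu=\sum_\nu\bigl(\sum_\mu K_{\mu\nu}\tilde{K}_{\mu\lambda}(q)\bigr)m_\nu$ closes the loop; every step is a standard fact and the bookkeeping is consistent with the indexing convention the paper adopts for $\tilde{K}_{\lambda\mu}(q)$. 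What your route buys is self-containedness at the cost of importing character theory of $\mathrm{GL}_n(\Fq)$ (Steinberg plus Lusztig), which sits a little oddly in a paper that advertises purely elementary combinatorial arguments; your sketched alternative via Hall polynomials and the Hall-algebra isomorphism is closer in spirit to how Kirillov and Macdonald actually establish the cited identity and would be the more natural way to make the lemma self-contained. The one point worth making explicit if you write this up is the convention check you already flag: verify on the identity element ($\lambda=(1^n)$, where $\chi^\mu(1)$ must match $\tilde{K}_{\mu,(1^n)}(q)$) that the unipotent character labelled $\mu$ in Steinberg's decomposition is the one labelled $\mu$ (not $\mu'$) in Lusztig's formula as quoted in the paper.
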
  

Our objective now is to determine the invariant flag generating function for an arbitrary matrix $\Delta\in \MM_n(\Fq)$. Recall that the action of $\Delta$ on $\Fq^n$ defines an $\Fq[t]$-module structure on $\Fq^n$ in which the action of $t$ on a column vector $v$ is given by $t\cdot v=\Delta v$. By the structure theorem for finitely generated modules over a principal ideal domain, this module is isomorphic to a direct sum
\begin{align}
 \bigoplus_{i=1}^k \bigoplus_{j=1}^{\ell_i}\frac{\Fq[t]}{(g_i^{\lambda_{i,j}})},\label{eq:isom}
\end{align}
where $g_i(t)\in \Fq[t]$ are distinct monic irreducible polynomials and the sequence $\lambda^i=(\lambda_{i,1},\lambda_{i,2},\ldots,\lambda_{i,\ell_i})$ is an integer partition for each $1\leq i\leq k$.  Let $d_i$ denote the degree of $g_i$ for $1\leq i\leq  k$.
\begin{definition}
With $d_i$ and $\lambda_i$ as above, the \emph{similarity class type} of the matrix $\Delta$ is the multiset $\tau=\{(d_1,\lambda^1),(d_2,\lambda^2),\ldots,(d_k,\lambda^k)\}$. The \emph{size} of $\tau$ is the integer $\sum_{i=1}^k d_i|\lambda^i|$.
\end{definition}
The notion of similarity class type can be traced back to the work of Green~\cite{MR72878} who studied the characters of the finite general linear groups. Considering similarity class types allows for a $q$-independent classification of conjugacy classes in these groups.

\begin{example} Consider $n\times n$ matrices over $\Fq$.
  \begin{enumerate}
  \item A simple matrix has type $\{(n,(1))\}$.
  \item A scalar multiple of the identity has type $\{(1,(1^n))\}.$  
  \item A regular nilpotent matrix has type $\{(1,(n))\}$.
  \end{enumerate}
\end{example}
From a combinatorial viewpoint one advantage of working with types is that several combinatorial invariants of a matrix often depend only on its type. We will soon see that this viewpoint is particularly convenient in the context of invariant flag generating functions. We have the following explicit formula for $F_\Delta(x)$.
\begin{proposition}\label{prop:ifgf}
  If $\Delta \in \MM_n(\Fq)$ is a matrix of similarity class type  $\tau=\{(d_i,\lambda^i)\}_{1\leq i\leq k}$, then
  \begin{align}
  F_{\Delta}(x)=\prod_{i=1}^k\tilde{H}_{\lambda^i}(x_1^{d_i},x_2^{d_i},\ldots;q^{d_i})=\prod_{i=1}^k p_{d_i}[\tilde{H}_{\lambda^i}(x;t)]_{|t=q}, \label{eq:ifgf}
\end{align}
where $p_r$ denotes the power sum symmetric function. Here, the plethystic substitution $p_{d_i}[\tilde{H}_{\lambda^i}(x;t)]$ is performed before evaluating at $t=q$.
\end{proposition}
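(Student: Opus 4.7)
The plan is to establish Proposition \ref{prop:ifgf} by decomposing $\Fq^n$ into the primary components of its $\Fq[t]$-module structure under $\Delta$, computing the invariant flag generating function on each component, and multiplying. First, a multiplicativity reduction: writing $\Fq^n=V_1\oplus\cdots\oplus V_k$ where $V_i$ is the $g_i$-primary component, the pairwise coprimality of the $g_i$ together with the Chinese remainder theorem forces every $\Delta$-invariant subspace $W\subseteq \Fq^n$ to split uniquely as $W=\bigoplus_i(W\cap V_i)$. Consequently invariant flags of $\Fq^n$ are in bijection with tuples of invariant flags of the $V_i$, with step dimensions adding termwise, so that
\begin{align*}
X_\alpha(\Delta)=\sum_{\alpha^{(1)}+\cdots+\alpha^{(k)}=\alpha}\prod_{i=1}^k X_{\alpha^{(i)}}(\Delta|_{V_i})
\end{align*}
and therefore $F_\Delta(x)=\prod_i F_{\Delta|_{V_i}}(x)$. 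This reduces the proposition to the case of a single primary component.

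Next I would treat one primary component, so fix an irreducible $g$ of degree $d$ and a partition $\lambda$, with $V\cong \bigoplus_j \Fq[t]/(g^{\lambda_j})$. The action algebra $\Fq[\Delta|_V]$ is isomorphic to the Artinian local ring $R:=\Fq[t]/(g^{\lambda_1})$ with residue field $\F_{q^d}$. Because $g$ is separable (being irreducible over the perfect field $\Fq$), Hensel's lemma provides a lift of a root of $g$ in $\F_{q^d}$ to an element $\xi\in R$ with $g(\xi)=0$; the subring $\Fq[\xi]\subseteq R$ is then isomorphic to $\F_{q^d}$. Through this embedding, $V$ becomes an $\F_{q^d}$-vector space of dimension $|\lambda|$, and the element $u:=\Delta|_V-\xi$ is nilpotent and generates the maximal ideal of $R$, so $R\cong \F_{q^d}[u]/(u^{\lambda_1})$. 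Under the decomposition $V\cong\bigoplus_j \Fq[t]/(g^{\lambda_j})$, the $j$th summand becomes $\F_{q^d}[u]/(u^{\lambda_j})$, a single Jordan block of size $\lambda_j$ for $u$; hence $u$ has Jordan type $\lambda$ on the $\F_{q^d}$-vector space $V$.

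Since $\xi$ is a polynomial in $\Delta|_V$ with coefficients in $\Fq$, an $\Fq$-subspace of $V$ is $\Delta$-invariant if and only if it is closed under both $\F_{q^d}$-scalars and $u$; that is, $\Fq[t]$-submodules and $\F_{q^d}[u]$-submodules of $V$ coincide. Because $\dim_{\Fq}W=d\cdot\dim_{\F_{q^d}}W$ for any such submodule, invariant flags over $\Fq$ with step dimensions $\alpha$ correspond bijectively to invariant flags of the nilpotent $\F_{q^d}$-operator $u$ with step dimensions $\beta$ satisfying $\alpha_j=d\beta_j$. Applying Lemma \ref{lem:nilflags} over $\F_{q^d}$ then gives
\begin{align*}
F_{\Delta|_V}(x)=\sum_{\beta}X_\beta(u)\,x_1^{d\beta_1}x_2^{d\beta_2}\cdots=\tilde{H}_\lambda(x_1^d,x_2^d,\ldots;q^d)=p_d\bigl[\tilde{H}_\lambda(x;t)\bigr]_{\,|\,t=q},
\end{align*}
the last equality being the definition of plethystic substitution by $p_d$, which raises every indeterminate (including $t$) to its $d$th power. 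Multiplying over the primary components yields the proposition.

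The main obstacle is the arithmetic change of rings in the single-component step: producing the coefficient field $\F_{q^d}$ inside the action algebra cleanly, and verifying that $\Fq[t]$-submodules and $\F_{q^d}[u]$-submodules of $V$ coincide. Once this is established, Lemma \ref{lem:nilflags} and the primary decomposition reduce the remainder of the argument to bookkeeping.
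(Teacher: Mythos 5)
Your argument is correct and follows essentially the same route as the paper: reduce to primary components via the splitting $W=\bigoplus_i(W\cap V_i)$ (the paper cites Brickman--Fillmore's lattice product decomposition for this), invoke the isomorphism $\Fq[t]/(g^r)\cong\F_{q^d}[u]/(u^r)$ (which the paper asserts without proof and you justify via Hensel's lemma), and apply Lemma \ref{lem:nilflags} over $\F_{q^d}$ with the dimension rescaling $x_i\mapsto x_i^{d}$, $q\mapsto q^{d}$. The extra care you take with the coefficient field $\F_{q^d}\subseteq R$ and the coincidence of $\Fq[t]$- and $\F_{q^d}[u]$-submodules is sound detail, not a different method.
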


\begin{proof}
Since $\mathcal{L}(\Delta)$ is the product of the invariant subspace lattices corresponding to its primary parts (Brickman and Fillmore \cite[Thm. 1]{MR213378}), it can be seen that  
\begin{align}
  F_\Delta=\prod_{i=1}^k F_{\Delta_i},\label{eq:primdecomp}
\end{align}
where $\Delta_i$ denotes the primary part of $\Delta$ with similarity class type $\{(d_i,\lambda^i)\}$ for $1\leq i\leq k$. If $g(t)\in \Fq[t]$ is an irreducible polynomial of degree $d$, then $\Fq[t]/(g^r)$ is isomorphic to $\mathbb{F}_{q^d}[u]/(u^r)$. In view of Equation \eqref{eq:isom} and Lemma \ref{lem:nilflags}, this yields
\begin{align*}
  F_{\Delta_i}= \tilde{H}_{\lambda^i}(x_1^{d_i},x_2^{d_i},\ldots;q^{d_i})=p_{d_i}[\tilde{H}_{\lambda^i}(x;t)]_{|t=q},
\end{align*}
and the result follows.
\end{proof}
The expression for $F_\Delta$ in Proposition~\ref{prop:ifgf} is a product of symmetric functions where the parameter $t$ is specialized to a prime power $q$. Rather than specialize the parameter, one can work directly with the parametric versions $\tilde{H}_\lambda(x;t)$ by considering similarity class types instead of matrices. For a similarity class type $\tau=\{(d_1,\lambda^1),(d_2,\lambda^2),\ldots,(d_k,\lambda^k)\},$  define
\begin{align}
  F_{\tau}(x;t):=\prod_{i=1}^k p_{d_i}[\tilde{H}_{\lambda^i}(x;t)].\label{eq:ftau}
\end{align}
 Given a prime power $q$ and a matrix $\Delta\in \MM_n(\mathbb{F}_{q})$ with similarity class type $\tau$, it is clear that $F_\Delta(x)=F_{\tau}(x;q)$. With this approach statements about matrices can be restated in terms of similarity class types. For instance, instead of the quantity $X_\lambda(\Delta)$, one considers $X_\lambda(\tau)$ defined as the coefficient of the monomial $m_\lambda$ in $F_\tau(x;t)$ as in Equation \eqref{eq:ftau}. Although our statements will often involve matrices, it is useful to keep in mind the corresponding statements for similarity class types and we will give such examples.
\begin{remark}
For arbitrary $\tau$ and $q$, there may be no matrix of similarity class type $\tau$ over $\Fq$. However, for a fixed $\tau$, a matrix of type $\tau$ over $\Fq$ always exists for sufficiently large prime powers $q$.  
\end{remark}

We now give examples of various symmetric functions which arise as $F_\Delta$ for some $\Delta$. In Section \ref{sec:fullprofiles} we will see that a combinatorial interpretation can be given for the coefficients in the $q$-Whittaker expansions of each of these functions. The next two examples show that the power sum and homogeneous symmetric functions arise as invariant flag generating functions.
\begin{example}\label{eg:ifgfispower}
    A matrix is \emph{regular semisimple} if its characteristic polynomial is a product of distinct irreducible polynomials $g_i(t)(1\leq i\leq k)$ over $\Fq$. Let $\lambda=(\lambda_1,\ldots,\lambda_k)$ denote the integer partition obtained by arranging the degrees of the $g_i$ in weakly decreasing order. The invariant flag generating function is given by $\prod_{i=1}^k p_{\lambda_i}=p_\lambda$. 
\end{example}
\begin{example}\label{eg:regularsplit}
    A matrix is \emph{regular split} if its minimal polynomial is equal to its characteristic polynomial and each is a product of linear factors: $\prod_{i=1}^k (x-a_i)^{\lambda_i}$. In this case the invariant flag generating function is $\prod_{i=1}^k h_{\lambda_i}=h_\lambda.$
\end{example}
The following is a simultaneous generalization of Examples \ref{eg:regularnilpotent} and \ref{eg:regularsplit}.
\begin{example}
  A matrix is \emph{triangulable} if it is similar to an upper triangular matrix. Triangulability is characterized by the minimal polynomial being a product of linear factors \cite[Sec. 6.4]{MR0276251} over $\Fq$. For such a matrix $\Delta$, we have $d_i=1$ in the decomposition \eqref{eq:isom} above for $1\leq i\leq k$. Since $p_1[f]=f$ for any symmetric function $f$, it follows from Proposition \ref{prop:ifgf} that 
  \begin{align*}
    F_\Delta=\prod_{i\geq 1} \tilde{H}_{\lambda^i}(x;q).
  \end{align*}
\end{example}
The next example will prove useful later on in the proof of Theorem \ref{thm:fullprofiles}.
\begin{example}\label{eg:diagonalifgf}
Let $\Delta \in \MM_n(\Fq)$ be a diagonalizable matrix with characteristic polynomial $\prod_{i=1}^k (x-a_i)^{\nu_i}$ for some partition $\nu$ of $n$ and distinct elements $a_i\in \Fq(1\leq i\leq k).$ Since $\Delta$ acts by  a scalar multiple of the identity on each eigenspace, it follows from Example \ref{eg:scalar}, that $F_\Delta=\prod_{i\geq 1}F_{\nu_i}$ where, for each positive integer $m$,
  \begin{align*}
    F_m:=\sum_{\lambda \vdash m}{m \brack \lambda}_qm_{\lambda}=W_{(m)}(x;q),
  \end{align*}
  a $q$-Whittaker function (see Macdonald \cite[p. 323, Eq. 4.9]{MR1354144} and \cite[p. 314, Eg. 1]{MR1354144} for the last equality above). Therefore $ F_\Delta=\prod_{i\geq 1} W_{(\nu_i)}(x;q).$
\end{example}

Our immediate goal is to show that the number of subspaces with a given $\Delta$-profile can be expressed in terms of $F_\Delta(x)$.
\begin{definition}
Let $r$ be a positive integer and suppose $\Delta \in \MM_n(\Fq)$. Given partitions $\lambda=(\lambda_1,\ldots,\lambda_r)$ and $\nu=(\nu_1,\ldots,\nu_r)$, denote by $\phi(\lambda,\nu;\Delta)$ the number of flags $W_1\supseteq W_2\supseteq \cdots \supseteq W_r$ of subspaces of $\Fq^n$ such that
\begin{align*}
  \dim W_i=\lambda_i &\qquad\mbox{ for } 1\leq i\leq r,\\
  \dim (W_i\cap \Delta^{-1}W_i)=\nu_i &\qquad\mbox{ for } 1\leq i\leq r,\\
  W_i \cap \Delta^{-1}W_i\supseteq W_{i+1} &\qquad\mbox{ for }1\leq i\leq r-1.  
\end{align*}
\end{definition}
Note that $\phi(\lambda,\lambda;\Delta)$ is just equal to the number of flags $W_1\supseteq W_2\supseteq \cdots \supseteq W_r$ of $\Delta$-invariant subspaces such that $\dim W_i=\lambda_i$ for $1\leq i\leq r$.

The following lemma shows that $\sigma(\mu,\Delta)$ can be expressed in the form $\phi(\lambda,\nu;\Delta)$ for suitably chosen $\lambda$ and $\nu$ \cite[Prop. 4.6]{pr}. 
\begin{lemma}
  Given a partition $\mu=(\mu_1,\ldots,\mu_k)$, let $m_i=\mu_1+\cdots+\mu_i$ for $1\leq i\leq k$. We have
  \begin{align*}
    \sigma(\mu,\Delta)=\phi(\lambda,\nu;\Delta),
  \end{align*}
  where $\lambda=(m_k,m_{k-1},\ldots,m_1)$ and $\nu=(m_k,m_{k-1}-\mu_k,m_{k-2}-\mu_{k-1},\ldots,m_1-\mu_2)$.
\end{lemma}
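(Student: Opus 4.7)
The plan is to exhibit an explicit bijection between the subspaces $W \subseteq \Fq^n$ of $\Delta$-profile $\mu$ and the flags counted by $\phi(\lambda,\nu;\Delta)$. For any subspace $W$, set
\[
V_j := W+\Delta W+\cdots+\Delta^{j-1}W, \qquad j\geq 1,
\]
and define the map $W \mapsto (W_1,\ldots,W_k)$ by $W_i := V_{k-i+1}$. Injectivity is immediate since $W=V_1=W_k$. The bulk of the argument is to verify that the image flag lies in the set enumerated by $\phi(\lambda,\nu;\Delta)$, and then to invert the construction.

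First I would verify the defining conditions of $\phi(\lambda,\nu;\Delta)$ for the flag $(V_k,V_{k-1},\ldots,V_1)$. The dimension condition $\dim W_i = m_{k-i+1} = \lambda_i$ is just the hypothesis on the $\Delta$-profile. The containment $V_{k-i+1}\cap \Delta^{-1}V_{k-i+1}\supseteq V_{k-i}$ holds because $\Delta V_{k-i}\subseteq V_{k-i+1}$ by the very definition of the $V_j$. The central technical point is the identity
\[
\dim\bigl(V_j\cap \Delta^{-1}V_j\bigr) = m_j - \mu_{j+1} \qquad (1\leq j\leq k),
\]
which matches $\nu_{k-j+1}$ after reindexing (and equals $m_k$ when $j=k$, giving $\nu_1$). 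To prove it, consider the composition $V_j \xrightarrow{\Delta} V_{j+1}\twoheadrightarrow V_{j+1}/V_j$. The kernel is exactly $V_j\cap \Delta^{-1}V_j$, while the relation $V_{j+1}=V_j+\Delta V_j$ forces the map to be surjective onto $V_{j+1}/V_j$, a space of dimension $\mu_{j+1}$. Rank-nullity then gives the claimed dimension. For $j=k$, the profile stabilizes, so $V_k$ is $\Delta$-invariant and the identity reduces to $\dim(V_k\cap\Delta^{-1}V_k)=m_k$.

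For surjectivity of the map, I would start with a flag $(W_1,\ldots,W_k)$ satisfying the $\phi$-conditions, set $W:=W_k$, and prove by decreasing induction on $i$ that $W_i = V_{k-i+1}$. The base case $W_k=V_1=W$ is trivial. For the inductive step, the hypothesis $W_i\cap\Delta^{-1}W_i\supseteq W_{i+1}$ (used at index $i-1$) yields $\Delta W_i\subseteq W_{i-1}$, so $V_{k-i+2}=V_{k-i+1}+\Delta V_{k-i+1} = W_i+\Delta W_i \subseteq W_{i-1}$; equality follows by comparing dimensions, since $\dim W_{i-1}=\lambda_{i-1}=m_{k-i+2}=\dim V_{k-i+2}$ (the latter by the dimension computation of the previous paragraph applied to $W$). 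Finally, $W_1=V_k$ is $\Delta$-invariant by the condition $\nu_1=\lambda_1$, so $V_{j+1}=V_j$ for all $j\geq k$, confirming that $W$ has $\Delta$-profile exactly $\mu$.

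The only subtle point is the dimension computation for $V_j\cap\Delta^{-1}V_j$; everything else is bookkeeping with the two defining containments. Once the identity $V_{j+1}=V_j+\Delta V_j$ is in hand, the surjectivity in the rank-nullity argument is automatic, and the bijection follows.
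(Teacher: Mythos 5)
Your bijection is correct, and it is the natural argument; note that the paper itself does not prove this lemma but only cites it from an external reference, so there is no in-paper proof to compare against. All the forward verifications are sound: the kernel of the composite $V_j\xrightarrow{\Delta}V_{j+1}\twoheadrightarrow V_{j+1}/V_j$ is indeed $V_j\cap\Delta^{-1}V_j$, surjectivity follows from $V_{j+1}=V_j+\Delta V_j$, and rank--nullity gives $\dim(V_j\cap\Delta^{-1}V_j)=m_j-\mu_{j+1}$, which matches $\nu_{k-j+1}$ (including $j=k$, where $\Delta$-invariance of $V_k$ gives $\nu_1=m_k$). The one place you should tighten the write-up is the surjectivity step: you justify $\dim V_{k-i+2}=m_{k-i+2}$ by invoking ``the dimension computation of the previous paragraph applied to $W$,'' but that computation assumed $W$ has profile $\mu$, which is precisely what you are trying to establish --- as phrased, this is circular. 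The fix is immediate and uses the same rank--nullity identity run in the other direction: by the inductive hypothesis $W_i=V_{k-i+1}$, the $\phi$-condition gives $\dim(V_{k-i+1}\cap\Delta^{-1}V_{k-i+1})=\nu_i=m_{k-i+1}-\mu_{k-i+2}$, and since the composite $V_{k-i+1}\to V_{k-i+2}/V_{k-i+1}$ is surjective with this kernel, $\dim V_{k-i+2}=m_{k-i+1}+\mu_{k-i+2}=m_{k-i+2}$, which is what the dimension comparison needs. With that adjustment (and the final observation you already make, that $\nu_1=\lambda_1$ forces $\Delta W_1\subseteq W_1$ so the profile stabilizes at $m_k$), the proof is complete.
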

Chen and Tseng \cite[Lemma 2.7]{MR3093853} (see \cite[Lem. 2.7]{MR4349887} for a more general result)  proved that the quantities $\phi(\lambda,\nu;\Delta)$ satisfy a recursion in which the base cases are of the form $\phi(\eta,\eta;\Delta)$. For $\Delta\in \MM_n(\Fq)$, this recursion involves coefficients that are products of $q$-binomial coefficients that depend only on $\lambda$ and $\nu$. These observations, together with Proposition \ref{prop:sortparts}, imply the following result.

\begin{theorem}\label{thm:formofanswer}
For each partition $\mu$ and each partition $\lambda$ of $n$, there exist polynomials $g_{\mu\lambda}(t)\in \ZZ[t]$, such that  
\begin{align*}
  \sigma(\mu,\Delta)=\sum_{\lambda \vdash n}g_{\mu \lambda}(q)X_\lambda(\Delta),
\end{align*}
for every prime power $q$ and every matrix $\Delta \in \MM_n(\Fq)$.
\end{theorem}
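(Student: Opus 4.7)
The plan is to combine three ingredients already developed in the excerpt: the Lemma just before the theorem (which expresses $\sigma(\mu,\Delta)$ as a flag count $\phi(\lambda,\nu;\Delta)$), the Chen--Tseng recursion for $\phi(\lambda,\nu;\Delta)$, and Proposition \ref{prop:sortparts} (which says $X_\alpha$ is invariant under reordering the parts of $\alpha$).

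First I would apply the Lemma to rewrite $\sigma(\mu,\Delta)=\phi(\lambda,\nu;\Delta)$ for the specific $\lambda,\nu$ attached to $\mu$. Both of these sequences have entries bounded by $n$ and are determined by $\mu$ alone, independent of $q$ and $\Delta$. Next I would invoke the Chen--Tseng recursion (cited from \cite{MR3093853,MR4349887}): since the recursion decreases $(\lambda,\nu)$ in a suitable well-founded order toward base cases of shape $\phi(\eta,\eta;\Delta)$, and since each recursive step contributes a coefficient that is a product of $q$-binomial coefficients depending only on the entries of $\lambda$ and $\nu$, iterating it yields an expansion
\[
  \phi(\lambda,\nu;\Delta)\;=\;\sum_{\eta}c_{\lambda\nu\eta}(q)\,\phi(\eta,\eta;\Delta),
\]
where the $c_{\lambda\nu\eta}(t)\in\ZZ[t]$ are polynomials in $t$ that do not depend on $q$ or on $\Delta$, and the sum is over a finite family of partitions $\eta$ (with parts bounded by $n$).

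The third step is to identify the base cases $\phi(\eta,\eta;\Delta)$ with the invariant flag counts $X_\alpha(\Delta)$. By definition $\phi(\eta,\eta;\Delta)$ is the number of chains $W_1\supseteq W_2\supseteq\cdots\supseteq W_r$ of $\Delta$-invariant subspaces with $\dim W_i=\eta_i$. Prepending $W_0=\Fq^n$ and appending $W_{r+1}=(0)$, then reversing, produces an increasing flag of $\Delta$-invariant subspaces from $(0)$ to $\Fq^n$ whose consecutive quotient dimensions form the weak composition
\[
  \alpha(\eta)\;=\;(\eta_r,\,\eta_{r-1}-\eta_r,\,\ldots,\,\eta_1-\eta_2,\,n-\eta_1)
\]
of $n$; conversely every such flag arises this way. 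Hence $\phi(\eta,\eta;\Delta)=X_{\alpha(\eta)}(\Delta)$, and Proposition \ref{prop:sortparts} lets me replace $\alpha(\eta)$ by its sorted partition $\widetilde{\alpha(\eta)}\vdash n$, giving $\phi(\eta,\eta;\Delta)=X_{\widetilde{\alpha(\eta)}}(\Delta)$.

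Finally I would collect terms according to the resulting partition of $n$: defining
\[
  g_{\mu\lambda}(t)\;:=\;\sum_{\eta\,:\,\widetilde{\alpha(\eta)}=\lambda}c_{\lambda(\mu)\,\nu(\mu)\,\eta}(t)\;\in\;\ZZ[t],
\]
the identity $\sigma(\mu,\Delta)=\sum_{\lambda\vdash n}g_{\mu\lambda}(q)X_\lambda(\Delta)$ follows, with polynomials $g_{\mu\lambda}$ that are manifestly independent of $q$ and of $\Delta$. The only real difficulty is the second step: one must trust (or verify) that the Chen--Tseng recursion really terminates at the diagonal base cases $\phi(\eta,\eta;\Delta)$ with coefficients in $\ZZ[q]$; the remaining steps are purely combinatorial bookkeeping, with the reversal/prepending trick converting decreasing chains of invariant subspaces into the increasing flags counted by $X_\alpha(\Delta)$.
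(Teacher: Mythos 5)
Your proposal follows essentially the same route as the paper: reduce $\sigma(\mu,\Delta)$ to $\phi(\lambda,\nu;\Delta)$ via the preceding lemma, iterate the Chen--Tseng recursion down to the base cases $\phi(\eta,\eta;\Delta)$, identify those with invariant-flag counts $X_\alpha(\Delta)$, and sort via Proposition \ref{prop:sortparts}. Your write-up is in fact more detailed than the paper's (which states the theorem as an immediate consequence of these observations), and the one point you flag as requiring trust --- that the recursion terminates at diagonal base cases with coefficients in $\ZZ[q]$ --- is exactly what the paper also takes on faith from the cited references.
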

\section{Diagonal operators and $q$-Whittaker functions}\label{sec:diagonal}
We wish to determine the polynomials $g_{\mu\lambda}(t)$ appearing in Theorem \ref{thm:formofanswer}. This is accomplished by setting up a system of linear equations in Section \ref{sec:fullprofiles}. Diagonalizable operators play a central role in this analysis and we begin by discussing some relevant results. 

\begin{definition}
    The type of  a diagonalizable matrix is the integer partition obtained by sorting the dimensions of its eigenspaces in weakly decreasing order.
\end{definition}
Note that a diagonalizable matrix of type $\nu$ over $\Fq$ exists if and only if the number of parts of $\nu$ is at most $q$. The following result was proved in \cite[Thm.~4.21]{ram2023diagonal}.
\begin{theorem}\label{thm:diagonal}
For each pair $\mu,\nu$ of integer partitions of $n$, there exist polynomials $b_{\mu\nu}(t)\in \ZZ[t]$ such that
  \begin{align*}
    \sigma(\mu,\Delta)=(q-1)^{\sum_{j\geq 2}\mu_j}q^{\sum_{j\geq 2}{\mu_j \choose 2}}b_{\mu\nu}(q),
  \end{align*}
for each prime power $q$ and each diagonal matrix $\Delta\in \MM_n(\Fq)$ of type $\nu$. 
\end{theorem}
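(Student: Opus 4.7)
The plan is to enumerate subspaces with prescribed $\Delta$-profile directly, by putting each such subspace into a canonical form adapted to the eigenspace decomposition. First observe that $\sigma(\mu,\Delta)$ is invariant under conjugation of $\Delta$, and any two diagonal matrices of the same type $\nu=(\nu_1,\dots,\nu_k)$ are conjugate via a permutation matrix. Hence $\sigma(\mu,\Delta)$ depends only on $\nu$, and I may fix a diagonal $\Delta$ acting by distinct scalars $a_1,\dots,a_k\in\Fq$ on eigenspaces $V_i$ of dimension $\nu_i$, with $\Fq^n=V_1\oplus\cdots\oplus V_k$.

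Next I would put each subspace $W$ of dimension $|\mu|$ into a canonical form compatible with both the eigenspace decomposition and the Krylov filtration $W\subseteq W+\Delta W\subseteq\cdots$. Concretely, the idea is to choose a basis $\{w_{ij}\}$ of $W$ in which $w_{ij}$ first appears at the $i$-th level of the filtration (so there are $\mu_i$ basis vectors at level $i$) and to expand each $w_{ij}$ in the eigenbasis. Because $\Delta$ acts diagonally, the behaviour of a single $w_{ij}$ under iterated application of $\Delta$ is governed by a Vandermonde matrix in the $a_r$'s, whose rank determines exactly which eigenspaces project nontrivially into each graded piece. Recording this data produces a semistandard-tableau-like labelling of the Young diagram of shape $\mu'$ with entries in $\{1,\dots,k\}$ and content $\nu$.

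Then, for each such tableau $T$, I would count the subspaces mapping to $T$. The key claim is that this count factors cleanly into: a factor of $(q-1)$ for each scalar entry in an off-pivot position of the canonical form that is forced to be nonzero (there are $\sum_{j\ge 2}\mu_j$ such entries, one per cell in columns $\ge 2$ of the diagram of $\mu$); a factor of $q$ for each free ``upper-triangular'' entry (totalling $\sum_{j\ge 2}\binom{\mu_j}{2}$); and a residual integer polynomial in $q$ recording the remaining combinatorial choices. Summing over all tableaux of shape $\mu'$ and content $\nu$ collects the residual contributions into the single polynomial $b_{\mu\nu}(t)\in\ZZ[t]$, evaluated at $q$. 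The fact that the exponents of $q-1$ and $q$ depend only on $\mu$ (and not on $\nu$) is consistent with their being extracted at the level of the canonical form before any tableau enumeration takes place.

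The main obstacle will be the second step: defining the canonical form so that the profile and the $q$-counts decouple in exactly this way. In particular one must verify that the ``upper-triangular freedoms'' contribute precisely $\binom{\mu_j}{2}$ factors of $q$ for each column $j\ge 2$, and that the ``nonzero-off-pivot'' positions line up with cells in columns $\ge 2$ of $\mu$. This bookkeeping is delicate, and is where multiset Mahonian statistics and the $q$-rook theory of Garsia--Remmel enter; this is the engine of the argument in \cite{ram2023diagonal} that the present proof would follow.
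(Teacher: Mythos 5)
First, a point of reference: the paper does not actually prove Theorem~\ref{thm:diagonal} --- it is imported from \cite[Thm.~4.21]{ram2023diagonal}, and the surrounding facts (the recurrence in Proposition~\ref{prop:recforb}, the tableau expansion in Remark~\ref{rem:bvanishes}) are likewise cited. Your outline is in the spirit of that reference: reduce to a fixed diagonal matrix of type $\nu$ (correct, by conjugation invariance and permutation-conjugacy of diagonal matrices of the same type), put each subspace into a canonical form relative to the eigenspace decomposition, index the resulting strata by tableaux of content $\nu$, and factor the count over each stratum into powers of $q-1$, powers of $q$, and a residual integer polynomial. As a strategy this is sound and matches how the cited work proceeds.

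As a proof, however, the proposal has a genuine gap, which you acknowledge: the entire content of the theorem is the claim that every stratum count factors as $(q-1)^{|\mu|-\mu_1}q^{\sum_{j\geq 2}\binom{\mu_j}{2}}$ times a polynomial in $q$, uniformly over strata, and you neither construct the canonical form nor verify this factorization. Two specific issues would need repair. (i) The single-vector Vandermonde observation controls the cyclic subspace of one vector, but the profile $\dim(W+\Delta W+\cdots+\Delta^{j-1}W)$ is a property of the whole subspace; the interactions among the basis vectors $w_{ij}$ --- which eigenspace components become redundant modulo the span of the others at each level of the Krylov filtration --- are precisely where the difficulty and the statistic underlying $b_{\mu\nu}$ reside, and the sketch says nothing about how to control them. (ii) Your cell bookkeeping conjugates incorrectly: $\sum_{j\geq 2}\mu_j=|\mu|-\mu_1$ counts the cells of $\mu$ outside the first \emph{row} (equivalently, the cells of $\mu'$ in columns $\geq 2$), whereas ``cells in columns $\geq 2$ of $\mu$'' would give $|\mu|-\ell(\mu)$; similarly $\binom{\mu_j}{2}$ is attached to the $j$th \emph{part} of $\mu$, and the tableaux indexing the strata have shape $\mu$, not $\mu'$ (cf.\ Remark~\ref{rem:bvanishes}). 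These slips are fixable, but as written the exponents you extract do not match the statement, and the decisive combinatorial step remains outsourced to \cite{ram2023diagonal}.
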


 The polynomials $b_{\mu\nu}(t)$ satisfy the following recurrence relation \cite[Cor. 5.2]{ram2023diagonal} which was derived by considering the decomposition of the open Schubert cells in the Grassmanian according to subspace profiles with respect to a diagonal operator.
\begin{proposition}\label{prop:recforb} 
If $\mu,\nu$ are partitions of the same size with $\nu=(\nu_1,\ldots,\nu_k),$ then 
\begin{align*}
  b_{\mu\nu}(t)=\sum_{\substack{\rho:\mu/\rho\text{ is a horizontal}\\ \text{strip of size } \nu_k}}\theta_{\mu/\rho}(t)b_{\rho \tilde{\nu}}(t) , 
\end{align*}
where $\tilde{\nu}$ denotes the partition obtained by deleting the last part of $\nu$ and
\begin{align}
  \theta_{\mu/\rho}(t):=\frac{[|\mu|-|\rho|]_t!}{[\mu_1-\rho_1]_t!} \prod_{i\geq 1}{\rho_{i}-\rho_{i+1} \brack \mu_{i+1}-\rho_{i+1}}_t.\label{eq:skewfunc}
\end{align}
\end{proposition}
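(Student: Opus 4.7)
The plan is to prove the recurrence by enumerating subspaces of profile $\mu$ via a projection that effectively peels off the last eigenspace of $\Delta$, then comparing with the rescaling in Theorem~\ref{thm:diagonal}. Fix a diagonal $\Delta$ of type $\nu=(\nu_1,\ldots,\nu_k)$ acting on $V=\F_q^n$ with $n=|\nu|$, and write $V=V'\oplus E$ where $E$ is a $\Delta$-eigenspace of dimension $\nu_k$ (with eigenvalue $\alpha$) and $V'$ is the sum of the remaining eigenspaces, so that $\Delta|_{V'}$ is diagonal of type $\tilde\nu$ and $\Delta|_E=\alpha\,\mathrm{Id}_E$. Let $\pi\colon V\to V'$ denote the projection along $E$.

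First, to each subspace $W\subseteq V$ with $\Delta$-profile $\mu$ I would associate $W':=\pi(W)\subseteq V'$. Since $W+\Delta W+\cdots$ exhausts $V$ (because $|\mu|=n$), the image $W'+\Delta'W'+\cdots$ exhausts $V'$, so the $\Delta|_{V'}$-profile $\rho$ of $W'$ satisfies $|\rho|=|\tilde\nu|$, forcing $|\mu/\rho|=\nu_k$. The next step is to show that $\mu/\rho$ is automatically a horizontal strip, i.e.\ that the interlacing inequalities $\mu_i\geq \rho_i\geq \mu_{i+1}$ hold for all $i\geq 1$. The left inequality is immediate since $\pi$ is non-expansive on Krylov sums; the right inequality reflects the fact that $\Delta$ acts as a scalar on $E$, so at each step the Krylov sum of $W$ can gain at most $\dim E/(E\cap\text{previous sum})$ new dimensions beyond what projecting to $V'$ already records.

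The enumeration then proceeds by computing the fiber size: for a fixed $W'\subseteq V'$ with profile $\rho$, one counts the $W\subseteq V$ projecting to $W'$ and having $\Delta$-profile $\mu$. Such a $W$ is determined by (i) the intersection $W\cap E$, and (ii) a family of linear lifts $W'\to E/(W\cap E)$ compatible with the graded profile constraints imposed by the columns of $\mu/\rho$. Enumerating these data column by column yields a product of $q$-multinomial coefficients. Substituting into the decomposition
\begin{align*}
  \sigma(\mu,\Delta) \;=\; \sum_{\rho}(\text{fiber count at }\rho)\cdot\sigma(\rho,\Delta|_{V'}),
\end{align*}
invoking Theorem~\ref{thm:diagonal} on both sides, and cancelling the common rescaling factor, should leave the claimed identity for $b_{\mu\nu}(q)$ provided the fiber count matches $(q-1)^{\sum_{j\geq 2}(\mu_j-\rho_j)}q^{\sum_{j\geq 2}(\binom{\mu_j}{2}-\binom{\rho_j}{2})}\theta_{\mu/\rho}(q)$ with $\theta$ as in \eqref{eq:skewfunc}. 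Equality of polynomials in $q$ for infinitely many prime powers then promotes the identity to one in $\mathbb{Z}[t]$.

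The main obstacle is this last identification: one must carry out the fiber enumeration in enough detail to recognize the resulting $q$-polynomial as the specific product of factorials and $q$-binomials defining $\theta_{\mu/\rho}(t)$, which is essentially a finite-field incarnation of a Hall--Littlewood Pieri rule adapted to diagonal operators. The delicacy lies in tracking, column by column of the horizontal strip, how the freedom in choosing the $E$-lift interacts with the successive profile constraints, and in matching the resulting powers of $(q-1)$ and $q$ with those absorbed into the rescaling prefactors of $\sigma(\mu,\Delta)$ and $\sigma(\rho,\Delta|_{V'})$.
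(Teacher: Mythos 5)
The paper does not actually prove this proposition; it imports it verbatim from \cite[Cor.~5.2]{ram2023diagonal}, noting only that it is ``derived by considering the decomposition of the open Schubert cells in the Grassmannian according to subspace profiles with respect to a diagonal operator.'' Your strategy --- split off the last eigenspace as $V=V'\oplus E$, push $W$ down to $W'=\pi(W)$, and count fibers --- is the natural linear-algebra incarnation of that cell decomposition, and your bookkeeping of the rescaling factors from Theorem~\ref{thm:diagonal} is consistent (the fiber count must indeed equal $(q-1)^{\sum_{j\geq 2}(\mu_j-\rho_j)}q^{\sum_{j\geq 2}(\binom{\mu_j}{2}-\binom{\rho_j}{2})}\theta_{\mu/\rho}(q)$ for the recurrence to drop out). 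So the approach is sound and essentially the intended one.

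As written, though, the proposal has a genuine gap: the entire quantitative content of the proposition is the identification of the fiber count with the specific product $\frac{[|\mu|-|\rho|]_t!}{[\mu_1-\rho_1]_t!}\prod_{i\geq 1}{\rho_i-\rho_{i+1}\brack \mu_{i+1}-\rho_{i+1}}_t$ (up to the rescaling prefactors), and you explicitly defer exactly this step as ``the main obstacle.'' Asserting that a column-by-column enumeration of lifts ``yields a product of $q$-multinomial coefficients'' does not single out $\theta_{\mu/\rho}$ from other $q$-polynomials with the same $q=1$ specialization; one must actually carry out the count, and also check that the fiber cardinality depends only on $\rho$ and not on the particular $W'$, so that the sum over $W'$ factors as claimed. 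A second, smaller gap is the assertion that $\mu/\rho$ is automatically a horizontal strip. The inequality $\rho_i\leq\mu_i$ does follow from $\rho_1+\cdots+\rho_j=\mu_1+\cdots+\mu_j-e_j$ with $e_j:=\dim(K_j\cap E)$ and $K_j:=W+\Delta W+\cdots+\Delta^{j-1}W$, since $e_j$ is nondecreasing; but for $\rho_i\geq\mu_{i+1}$ your appeal to ``$\Delta$ acts as a scalar on $E$'' needs to be made into an argument, e.g.\ that $\Delta-\alpha$ induces a surjection $K_j/K_{j-1}\to K_{j+1}/K_j$ whose kernel contains $(E\cap K_j+K_{j-1})/K_{j-1}$, giving $\mu_{j+1}\leq\mu_j-(e_j-e_{j-1})=\rho_j$. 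Both gaps are fillable, but until the fiber computation is done the recurrence is not established.
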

Note that $  \theta_{\mu/\rho}(t)=0$ unless the skew diagram $\mu/\rho$ is a horizontal strip (see Macdonald \cite[p. 5]{MR1354144} for the definition).
\begin{remark}\label{rem:bvanishes}
It follows from Proposition \ref{prop:recforb} that $b_{\mu\nu}(t)$ can be written as a sum over semistandard tableaux of shape $\mu$ and content $\nu$ \cite[Def. 3.7]{ram2023diagonal}. In particular, $b_{\mu\nu}(t)=0$ unless $\mu\geq \nu$ in the dominance order (see Stanley \cite[Sec. 7.2]{MR1676282} for the definition). It also follows from the form of $\theta_{\mu/\rho}(t)$ that $b_{\mu\nu}(t)$ is a polynomial in $t$ with nonnegative integer coefficients which is nonzero whenever $\mu\geq \nu$.
\end{remark}

The polynomials $b_{\mu\nu}(t)$ can also be obtained from a statistic on a suitably defined class of set partitions. The fact that these polynomials can be expressed in terms of both set partition statistics and semistandard tableaux has led to an elementary correspondence between these two classical combinatorial classes. This correspondence yields a way to associate a set partition statistic to each Mahonian statistic on multiset permutations. The polynomials $b_{\mu\nu}(t)$ also have several interesting specializations. In particular, when $\mu=(m,m)$ and $\nu=(1^{2m})$ the polynomial $b_{\mu\nu}(t)$ coincides with the Touchard-Riordan generating polynomial for chord diagrams by their number of crossings. For more on this topic and some connections with $q$-rook theory, the reader is referred to \cite{ram2023diagonal}. Here we discuss a connection between these polynomials and $q$-Whittaker functions.

 Though they were originally studied by Macdonald \cite{MR1354144}, the name $q$-Whittaker was coined by Gerasimov, Lebedev and Oblezin \cite{MR2575477}. As noted earlier, the $q$-Whittaker functions $W_\lambda(x;t)$ form a basis for $\Lambda_{\QQ(t)}$. They interpolate between the Schur functions and the elementary symmetric functions:  
\begin{align*}
  W_\lambda(x;0)=s_{\lambda}(x), \qquad W_\lambda(x;1)=e_{\lambda'}(x).
\end{align*}
They also expand positively in the Schur basis,
\begin{align}
  W_{\mu}=\sum_{\lambda}K_{\lambda'\mu'}(t)s_\lambda,\label{eq:whittoschur}
\end{align}
where $K_{\lambda\mu}(t)$ denotes a Kostka-Foulkes polynomial (Bergeron \cite[Eq. 3.8]{bergeron2020survey}). In a representation theoretic context, the $q$-Whittaker functions arise in the setting of the graded Frobenius characteristic of the cohomology ring of Springer fibers. We refer to the survey article of Bergeron \cite{bergeron2020survey} for more on $q$-Whittaker functions.

The polynomials $b_{\mu\nu}(t)$ are closely related to the coefficients in the monomial expansion of the $q$-Whittaker function $W_\lambda$. More precisely, we have the following proposition \cite[Thm. 5.3]{ram2023diagonal}.
\begin{proposition}\label{prop:bmunuascoeff}
  For partitions $\mu$ and $\nu$,  
\begin{align*}
  b_{\mu\nu}(t)=\frac{\prod_{i\geq 1}[\nu_i]_t!}{\prod_{i\geq 1}[\mu_i-\mu_{i+1}]_t!}\langle W_\mu,h_\nu\rangle.
\end{align*}
\end{proposition}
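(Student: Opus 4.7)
The plan is to prove the identity by induction on $\ell(\nu)$, showing that the scaled scalar product $\frac{\prod_i [\nu_i]_t!}{\prod_i [\mu_i-\mu_{i+1}]_t!}\langle W_\mu, h_\nu\rangle$ satisfies the recurrence from Proposition \ref{prop:recforb}. The base case $\nu=\emptyset$ forces $\mu=\emptyset$ by the dominance-order vanishing noted in Remark \ref{rem:bvanishes}, and both sides trivially equal $1$. The vanishing condition extends to general $\mu\not\geq \nu$: using Equation \eqref{eq:whittoschur} to expand $W_\mu=\sum_\lambda K_{\lambda'\mu'}(t)s_\lambda$ shows that $\langle W_\mu, h_\nu\rangle=\sum_\lambda K_{\lambda'\mu'}(t)K_{\lambda\nu}$ is forced to be zero unless $\nu\leq\lambda\leq\mu$ in dominance order, matching the vanishing of $b_{\mu\nu}(t)$.

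For the inductive step, I would factor $h_\nu=h_{\nu_k}\, h_{\tilde\nu}$ and use self-adjointness of multiplication under the Hall pairing to write
\begin{align*}
\langle W_\mu,\, h_\nu\rangle \;=\; \langle h_{\nu_k}^{\perp}\, W_\mu,\, h_{\tilde\nu}\rangle,
\end{align*}
where $h_r^\perp$ denotes the skewing operator dual to multiplication by $h_r$. The critical ingredient is a Pieri-type formula for the $q$-Whittaker basis
\begin{align*}
h_r^{\perp}\, W_\mu(x;t) \;=\; \sum_{\rho} \Phi_{\mu/\rho}(t)\, W_\rho(x;t),
\end{align*}
where the sum runs over partitions $\rho\subseteq\mu$ with $\mu/\rho$ a horizontal strip of size $r$, and $\Phi_{\mu/\rho}(t)$ is an explicit product of $q$-binomial coefficients. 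Such a rule can be extracted from Macdonald's Pieri rule for the Hall-Littlewood basis by invoking $W_\lambda=\omega H_{\lambda'}$ from \eqref{eq:hisomegaw} together with the $\omega$-invariance of the Hall scalar product, with suitable transposition of indexing partitions.

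Substituting the Pieri expansion and the inductive hypothesis for $\langle W_\rho, h_{\tilde\nu}\rangle$, multiplying through by the global prefactor $\prod_i[\nu_i]_t!/\prod_i[\mu_i-\mu_{i+1}]_t!$, and using $\prod_i[\nu_i]_t!/\prod_i[\tilde\nu_i]_t!=[\nu_k]_t!$, the required recurrence reduces to verifying the purely algebraic identity
\begin{align*}
\theta_{\mu/\rho}(t) \;=\; [\nu_k]_t!\,\frac{\prod_i [\rho_i-\rho_{i+1}]_t!}{\prod_i [\mu_i-\mu_{i+1}]_t!}\,\Phi_{\mu/\rho}(t)
\end{align*}
for each horizontal strip $\mu/\rho$ of size $\nu_k$, with $\theta_{\mu/\rho}(t)$ defined as in \eqref{eq:skewfunc}.

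The principal obstacle is pinning down $\Phi_{\mu/\rho}(t)$ in precisely the form required: the transfer from the $P_\lambda(x;t)$ Pieri rule to one for $W_\lambda(x;t)$ via $\omega$ and conjugation of indexing partitions requires careful bookkeeping, and the Pieri coefficient must be repackaged so that the products range over the rows of $\mu$ rather than its columns. Once this is done, the closing $q$-binomial identity above is a direct comparison of exponents in the $q$-factorial products, most cleanly carried out row-by-row along the horizontal strip. Sanity checks at $\mu=(m)$ (where $W_{(m)}=\sum_{\lambda\vdash m}{m\brack \lambda}_t m_\lambda$ recovers $b_{(m),\nu}(t)=1$) and at $\mu=\nu=(1^n)$ (where $W_{(1^n)}=e_n$) confirm the normalization.
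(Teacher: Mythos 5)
The paper does not actually prove this proposition; it imports it by citation from \cite[Thm.~5.3]{ram2023diagonal}, so there is no internal proof to compare against. Your proposed argument is, however, a sound self-contained derivation from the ingredients available in this paper, and the two pieces you flag as needing verification do work out. First, the Pieri-type rule: the Hall--Littlewood Pieri rule in the form the paper itself uses later, $P_{\mu'}e_r=\sum_{\eta}\psi_{\eta/\mu}(t)P_{\eta'}$ with $\psi_{\eta/\mu}(t)=\prod_{i\geq 1}{\eta_i-\eta_{i+1}\brack \eta_i-\mu_i}_t$ over horizontal strips, dualizes (since $\langle H_{\lambda},P_{\kappa}\rangle=\delta_{\lambda\kappa}$) to $e_r^{\perp}H_{\eta'}=\sum_{\mu}\psi_{\eta/\mu}(t)H_{\mu'}$, and applying $\omega$ with Equation \eqref{eq:hisomegaw} gives exactly $h_r^{\perp}W_{\mu}=\sum_{\rho}\psi_{\mu/\rho}(t)W_{\rho}$, i.e.\ $\Phi_{\mu/\rho}=\psi_{\mu/\rho}$ as in \eqref{eq:uniqsol}. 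Second, the closing identity: writing ${\mu_i-\mu_{i+1}\brack \mu_i-\rho_i}_t=\frac{[\mu_i-\mu_{i+1}]_t!}{[\mu_i-\rho_i]_t!\,[\rho_i-\mu_{i+1}]_t!}$ and using $\prod_{i\geq 1}[\mu_{i+1}-\rho_{i+1}]_t!\cdot[\mu_1-\rho_1]_t!=\prod_{i\geq 1}[\mu_i-\rho_i]_t!$, both sides of
\begin{align*}
\theta_{\mu/\rho}(t)=[\nu_k]_t!\,\frac{\prod_i[\rho_i-\rho_{i+1}]_t!}{\prod_i[\mu_i-\mu_{i+1}]_t!}\,\psi_{\mu/\rho}(t)
\end{align*}
reduce to $[\nu_k]_t!\prod_{i\geq 1}\frac{[\rho_i-\rho_{i+1}]_t!}{[\mu_i-\rho_i]_t!\,[\rho_i-\mu_{i+1}]_t!}$, so the induction on $\ell(\nu)$ closes against Proposition \ref{prop:recforb}. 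The dominance-order vanishing discussion is a harmless consistency check rather than a needed step, since the recurrence reproduces the vanishing automatically. The only cosmetic caveat is that the cited source most likely argues by matching tableau formulas (the monomial expansion of $W_\mu$ against the SSYT expression for $b_{\mu\nu}$ from Remark \ref{rem:bvanishes}), whereas your route goes through the recurrence and a dual Pieri rule; both are legitimate, and yours has the advantage of using only facts already quoted in this paper.
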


\begin{remark}
  The symmetric functions $\widetilde{W}_\lambda$ defined by
  \begin{align}
    \widetilde{W}_\lambda(x;t)=\frac{(1-t)^{-\lambda_1}}{\prod_{i\geq 1}[\lambda_i-\lambda_{i+1}]_t!}W_\lambda[X(1-t)] \label{eq:dualwhit}
  \end{align}
  are dual to the $q$-Whittaker functions with respect to the Hall scalar product (Bergeron \cite{bergeron2020survey}). We have the following dual of Equation \eqref{eq:hisomegaw}:
  \begin{align}
\omega P_{\lambda'}=    \widetilde{W}_\lambda. \label{eq:wisomegap}
  \end{align}
\end{remark}
  
\begin{proposition}\label{prop:prodofH}
  For each partition $\nu=(\nu_1,\ldots,\nu_k)$, 
  \begin{align*}
\prod_{i=1}^k    W_{(\nu_i)} =\sum_{\mu}(1-t)^{|\mu|-\mu_1}b_{\mu\nu}(t)W_{\mu}.
  \end{align*}
\end{proposition}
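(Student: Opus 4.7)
The plan is to compute the coefficient of $W_\mu$ in the left-hand side by Hall duality with the basis $\{\widetilde{W}_\mu\}$, and then simplify by a short plethystic manipulation together with Proposition~\ref{prop:bmunuascoeff}. The key intermediate identity I will establish is
\begin{align*}
W_{(n)}(x;t)[X(1-t)] = (t;t)_n\, h_n(x),
\end{align*}
where $(t;t)_n = (1-t)^n [n]_t!$. This follows from Equation~\eqref{eq:wisomegap} at $\lambda = (n)$, which gives $\widetilde{W}_{(n)} = \omega P_{(1^n)} = \omega e_n = h_n$ (using the classical identity $P_{(1^n)} = e_n$ for the Hall--Littlewood polynomial of a column shape); substituting this into \eqref{eq:dualwhit} with $\lambda = (n)$ and solving for $W_{(n)}[X(1-t)]$ yields the displayed identity.

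Since plethystic substitution is a ring homomorphism, the identity above multiplies out to
\begin{align*}
\Bigl(\prod_{i=1}^k W_{(\nu_i)}\Bigr)[X(1-t)] = \prod_{i=1}^k (t;t)_{\nu_i}\, h_{\nu_i} = (1-t)^{|\nu|} \Bigl(\prod_{i=1}^k [\nu_i]_t!\Bigr) h_\nu.
\end{align*}
I then expand $\prod_i W_{(\nu_i)}$ in the $\{W_\mu\}$ basis by computing $\langle \prod_i W_{(\nu_i)},\, \widetilde{W}_\mu \rangle$. Substituting \eqref{eq:dualwhit} for $\widetilde{W}_\mu$ and invoking the self-adjointness of the map $f \mapsto f[X(1-t)]$ with respect to the Hall scalar product (which is immediate from $p_r[X(1-t)] = (1-t^r)\,p_r$ and the orthogonality $\langle p_\lambda,p_\mu\rangle = z_\lambda\delta_{\lambda\mu}$) gives
\begin{align*}
\frac{(1-t)^{-\mu_1}}{\prod_i [\mu_i - \mu_{i+1}]_t!}\, \Bigl\langle \Bigl(\prod_i W_{(\nu_i)}\Bigr)[X(1-t)],\, W_\mu \Bigr\rangle = \frac{(1-t)^{|\nu|-\mu_1}\,\prod_i [\nu_i]_t!}{\prod_i [\mu_i - \mu_{i+1}]_t!}\, \langle W_\mu, h_\nu \rangle.
\end{align*}
An application of Proposition~\ref{prop:bmunuascoeff} rewrites $\langle W_\mu, h_\nu \rangle$ in terms of $b_{\mu\nu}(t)$; the factorials cancel exactly, leaving $(1-t)^{|\nu|-\mu_1}\, b_{\mu\nu}(t)$. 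Since both sides vanish unless $|\mu| = |\nu|$, this agrees with $(1-t)^{|\mu|-\mu_1}\, b_{\mu\nu}(t)$, completing the expansion.

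The only real obstacle is making the intermediate identity $\widetilde{W}_{(n)} = h_n$ and the self-adjointness of $f \mapsto f[X(1-t)]$ explicit, as both are short plethystic facts but neither is stated outright in the excerpt. Everything else is a direct chain of identities already available above, and the argument is entirely algebraic, bypassing any counting interpretation via Example~\ref{eg:diagonalifgf}.
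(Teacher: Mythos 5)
Your proof is correct and follows essentially the same route as the paper: both arguments combine Proposition~\ref{prop:bmunuascoeff}, the plethystic relation \eqref{eq:dualwhit}, the identity $\widetilde{W}_{(k)}=h_k$, and the (self-)adjointness of $f\mapsto f[X(1-t)]$ to identify $\langle \prod_i W_{(\nu_i)},\widetilde{W}_\mu\rangle$ with $(1-t)^{|\mu|-\mu_1}b_{\mu\nu}(t)$. Your derivation of $\widetilde{W}_{(n)}=h_n$ from \eqref{eq:wisomegap} and $P_{(1^n)}=e_n$ is a legitimate substitute for the paper's citation of Bergeron, and the rest is the same computation organized in the opposite direction (moving the plethysm onto the product of $W_{(\nu_i)}$'s rather than onto $W_\mu$ and $h_\nu$).
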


\begin{proof}
  It follows easily from the definition of plethysm that
  \begin{align*}
  \langle f,g\rangle=\langle f[X(1-t)], g\left[\frac{X}{1-t}\right]\rangle,
  \end{align*}
 for symmetric functions $f$ and $g$. By Proposition \ref{prop:bmunuascoeff}, 
  \begin{align*}
    b_{\mu\nu}(t)&=\frac{\prod_{i\geq 1}[\nu_i]_t!}{\prod_{i\geq 1}[\mu_i-\mu_{i+1}]_t!}\langle W_\mu,h_\nu\rangle \\
                 &=\frac{\prod_{i\geq 1}[\nu_i]_t!}{\prod_{i\geq 1}[\mu_i-\mu_{i+1}]_t!} \langle W_\mu[X(1-t)],h_\nu\left[\frac{X}{1-t}\right]\rangle \\
    &=(1-t)^{\mu_1}\prod_{i\geq 1}[\nu_i]_t!\, \langle \widetilde{W}_\mu,\prod_{i=1}^kh_{\nu_i}\left[\frac{X}{1-t}\right]\rangle ,
  \end{align*}
 where the last equality follows from Equation \eqref{eq:dualwhit}. For each positive integer $k$, we have $h_k=\widetilde{W}_{(k)}$ (follows from Bergeron \cite[Eq. 3.4]{bergeron2020survey}). Therefore
  \begin{align*}
    b_{\mu\nu}(t)&=(1-t)^{\mu_1}\prod_{i\geq 1}[\nu_i]_t! \langle \widetilde{W}_\mu,\prod_{i=1}^k\widetilde{W}_{(\nu_i)}\left[\frac{X}{1-t}\right]\rangle \\
    &=(1-t)^{\mu_1-|\mu|} \langle \widetilde{W}_\mu,\textstyle\prod_{i=1}^kW_{(\nu_i)}\rangle,
  \end{align*}
  again by Equation \eqref{eq:dualwhit}. It follows that $(1-t)^{|\mu|-\mu_1}b_{\mu\nu}(t)$ is the coefficient of $W_\mu$ in the $q$-Whittaker expansion of $\prod_{i=1}^kW_{(\nu_i)}$.
\end{proof}

\section{Full profiles}\label{sec:fullprofiles} 
In this section we obtain an explicit formula for $\sigma(\mu,\Delta)$ when $\mu$ is a partition of $n$. Arbitrary profiles are considered in Section \ref{sec:allprofiles}. 

Let $a_{\mu\lambda}(t)$ and $\tilde{a}_{ \mu\lambda}(t)$ denote the transition coefficients between the Hall-Littlewood functions and the elementary symmetric functions:
\begin{align*}
e_{\mu}=\sum_{\lambda}a_{\mu\lambda}(t)P_{\lambda}(x;t) \mbox{ and }  P_{\mu}(x;t)=\sum_{\lambda}\tilde{a}_{\mu\lambda}(t)e_{\lambda}.
\end{align*}
It is known that $a_{\mu\lambda}(1)$ is equal to the number of $(0,1)$-matrices with row sums $\mu$ and column sums $\lambda$. In fact, one can view the polynomials $a_{\mu\lambda}(t)$ as the generating polynomial for a suitably defined combinatorial statistic on $(0,1)$-matrices (Macdonald \cite[p. 211]{MR1354144}). On the other hand, the polynomials $\tilde{a}_{\mu\lambda}(t)$ do not have nonnegative coefficients in general and do not appear to have a nice combinatorial description. A very intricate explicit formula for $\tilde{a}_{\mu\lambda}(t)$ was found by Lassalle and Schlosser \cite[Thm. 7.5]{MR2222355} who gave an expression for the more general two-parameter coefficients in the elementary expansion of Macdonald polynomials. Recall that the Kostka numbers $K_{\lambda \mu}$ arise as coefficients in the monomial expansion of the Schur function, $ s_\lambda=\sum_{\mu} K_{\lambda\mu}m_{\mu}.$ The following lemma (Kirillov \cite[Eq. 0.3]{MR1768934}) expresses the polynomials $a_{\mu\lambda}(t)$ above in terms of Kostka numbers and Kostka-Foulkes polynomials.

\begin{lemma}\label{lem:dinkostka} 
  We have $a_{\mu\lambda}(t)=\sum_{\eta}K_{\eta'\mu}K_{\eta\lambda}(t).$
\end{lemma}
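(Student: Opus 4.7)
The plan is to prove the identity by routing through the Schur basis: I would expand $e_\mu$ in Schur functions using the involution $\omega$, and then re-expand each Schur function in the Hall-Littlewood basis, finally comparing with the defining relation of $a_{\mu\lambda}(t)$.

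First I would start from the classical Schur expansion of the complete homogeneous symmetric function, $h_\mu = \sum_\eta K_{\eta\mu}\, s_\eta$, where $K_{\eta\mu}$ is the ordinary Kostka number. Applying the involutory automorphism $\omega$ described in the excerpt — which sends $h_\mu$ to $e_\mu$ and $s_\eta$ to $s_{\eta'}$ — yields $e_\mu = \sum_\eta K_{\eta\mu}\, s_{\eta'}$. Re-indexing the summation by $\eta \leftrightarrow \eta'$ (using that conjugation is an involution on the set of partitions) converts this to
\begin{align*}
e_\mu = \sum_{\eta} K_{\eta'\mu}\, s_{\eta}.
\end{align*}

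Next, I would use the defining expansion of the Kostka-Foulkes polynomials, $s_\eta = \sum_\lambda K_{\eta\lambda}(t)\, P_\lambda(x;t)$, to rewrite each Schur function on the right. Substituting and interchanging the order of summation gives
\begin{align*}
e_\mu = \sum_\lambda \Bigl( \sum_\eta K_{\eta'\mu}\, K_{\eta\lambda}(t) \Bigr) P_\lambda(x;t).
\end{align*}
Since the Hall-Littlewood polynomials $\{P_\lambda(x;t)\}$ form a basis of $\Lambda_{\QQ(t)}$, comparing this with the defining relation $e_\mu = \sum_\lambda a_{\mu\lambda}(t)\, P_\lambda(x;t)$ and invoking linear independence yields the asserted identity.

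The argument is entirely formal and involves no real obstacle; the only delicate point is keeping careful track of the conjugate partition $\eta'$ that appears after applying $\omega$ and re-indexing. Everything else is a routine manipulation with the standard transition matrices between the bases $\{h_\mu\},\{e_\mu\},\{s_\eta\},\{P_\lambda\}$.
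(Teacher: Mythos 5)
Your proof is correct, and the chain $h_\mu=\sum_\eta K_{\eta\mu}s_\eta \xrightarrow{\ \omega\ } e_\mu=\sum_\eta K_{\eta'\mu}s_\eta$, followed by $s_\eta=\sum_\lambda K_{\eta\lambda}(t)P_\lambda(x;t)$ and comparison of coefficients, is exactly the standard derivation of this identity. The paper does not prove the lemma itself but cites it to Kirillov (Eq.~0.3), so your argument supplies the routine verification that the citation stands in for; no gaps.
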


  For each partition $\lambda$, let $\epsilon_\lambda=(-1)^{|\lambda|-\ell(\lambda)}$, where $\ell(\lambda)$ denotes the number of parts of $\lambda$. The following theorem is the main result of this section.

\begin{theorem}\label{thm:fullprofiles}
  
For each partition $\mu$ of $n$,
\begin{align*}
  \sigma(\mu,\Delta)=\epsilon_{\mu'}\, q^{\sum_{j\geq 2}{\mu_j \choose 2}}\sum_{\lambda \vdash n}\tilde{a}_{\mu' \lambda}(q)X_\lambda(\Delta),
\end{align*}
for every prime power $q$ and every matrix $\Delta\in \MM_n(\Fq)$.
\end{theorem}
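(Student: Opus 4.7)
The strategy is to identify the polynomials $g_{\mu\lambda}(t)$ supplied by Theorem~\ref{thm:formofanswer} by first checking the proposed formula for diagonal operators and then promoting it to arbitrary $\Delta$ via a linear-independence argument. Since $\{h_\lambda\}$ and $\{m_\nu\}$ are dual under the Hall scalar product, $X_\lambda(\Delta)=\langle F_\Delta,h_\lambda\rangle$, and Theorem~\ref{thm:formofanswer} rewrites as
\begin{align*}
\sigma(\mu,\Delta)=\langle F_\Delta,G_\mu(x;q)\rangle,\qquad G_\mu(x;t):=\sum_{\lambda\vdash n}g_{\mu\lambda}(t)\,h_\lambda.
\end{align*}
Because \eqref{eq:wisomegap} gives $\widetilde{W}_\mu=\omega P_{\mu'}=\sum_\lambda \tilde{a}_{\mu'\lambda}(t)h_\lambda$, Theorem~\ref{thm:fullprofiles} is equivalent to the claim that
\begin{align*}
\sigma(\mu,\Delta)=\epsilon_{\mu'}\,q^{\sum_{j\geq 2}\binom{\mu_j}{2}}\,\langle F_\Delta,\widetilde{W}_\mu(x;q)\rangle\qquad\text{for every }\Delta\in\MM_n(\Fq).
\end{align*}

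\smallskip

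\noindent\textbf{The diagonal case.} When $\Delta$ is diagonal of type $\nu$, Example~\ref{eg:diagonalifgf} gives $F_\Delta=\prod_{i}W_{(\nu_i)}(x;q)$, and Proposition~\ref{prop:prodofH} expands this as $\sum_\rho (1-q)^{|\rho|-\rho_1}b_{\rho\nu}(q)W_\rho(x;q)$. Duality between $\{W_\rho\}$ and $\{\widetilde{W}_\rho\}$ then gives $\langle F_\Delta,\widetilde{W}_\mu(x;q)\rangle=(1-q)^{|\mu|-\mu_1}b_{\mu\nu}(q)$. Since $|\mu'|=|\mu|$ and $\ell(\mu')=\mu_1$, one has $\epsilon_{\mu'}=(-1)^{|\mu|-\mu_1}$ and $\sum_{j\geq 2}\mu_j=|\mu|-\mu_1$, so multiplying by the prefactor produces $(q-1)^{\sum_{j\geq 2}\mu_j}q^{\sum_{j\geq 2}\binom{\mu_j}{2}}b_{\mu\nu}(q)$, which Theorem~\ref{thm:diagonal} identifies with $\sigma(\mu,\Delta)$. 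The target identity therefore holds for every diagonal $\Delta$.

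\smallskip

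\noindent\textbf{Promotion to arbitrary $\Delta$.} By Theorem~\ref{thm:formofanswer} on the left and the $h$-expansion of $\widetilde{W}_\mu$ on the right, the difference of the two sides of the target identity takes the form $\sum_{\lambda\vdash n}d_{\mu\lambda}(q)X_\lambda(\Delta)$ with polynomials $d_{\mu\lambda}\in\QQ[t]$, and vanishes for every diagonal $\Delta$ by the preceding paragraph. For $q\geq n$ every partition $\nu\vdash n$ is realized as the type of some diagonal $\Delta_\nu\in\MM_n(\Fq)$, and Proposition~\ref{prop:prodofH} together with Remark~\ref{rem:bvanishes} shows that the matrix $\bigl((1-t)^{|\rho|-\rho_1}b_{\rho\nu}(t)\bigr)_{\rho,\nu\vdash n}$ transitioning from $\{\prod_iW_{(\nu_i)}(x;t)\}_\nu$ to $\{W_\rho(x;t)\}_\rho$ is triangular in dominance order with nonzero diagonal. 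Thus $\{F_{\Delta_\nu}\}_{\nu\vdash n}$ is linearly independent in $\Lambda_\QQ$, which renders the matrix $\bigl(X_\lambda(\Delta_\nu)\bigr)_{\lambda,\nu\vdash n}$ invertible for all but finitely many prime powers $q$. This forces $d_{\mu\lambda}(q)=0$ for such $q$, and consequently $d_{\mu\lambda}\equiv 0$ as a polynomial.

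\smallskip

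\noindent The main technical hurdle is the linear-independence step in the last paragraph: one must argue that the symmetric functions $\prod_iW_{(\nu_i)}(x;t)$, as $\nu$ ranges over partitions of $n$, form a basis for the degree-$n$ component of $\Lambda_{\QQ(t)}$. This is handled by combining the dominance-order triangularity of the $b_{\mu\nu}(t)$ from Remark~\ref{rem:bvanishes} with the explicit expansion in Proposition~\ref{prop:prodofH}.
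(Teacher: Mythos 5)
Your proposal is correct and follows essentially the same route as the paper: verify the identity on diagonal operators using Theorem~\ref{thm:diagonal}, Example~\ref{eg:diagonalifgf} and Proposition~\ref{prop:prodofH}, then extend to arbitrary $\Delta$ by a linear-independence argument resting on the dominance-triangularity of $(b_{\mu\nu})$ from Remark~\ref{rem:bvanishes}, together with Theorem~\ref{thm:formofanswer} and polynomiality in $q$. The only difference is cosmetic but pleasant: by working throughout in the Hall-scalar-product picture (i.e.\ proving the equivalent Theorem~\ref{thm:fullprofiles1} directly and pairing $F_{\Delta_\nu}$ against $\widetilde{W}_\mu$), you invert the matrix $(X_\lambda(\Delta_\nu))$ rather than $(\tilde{\sigma}(\mu,\Delta_\nu))$ and bypass the generating-function computation via Lemma~\ref{lem:dinkostka} entirely.
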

\begin{proof}
  Define
  \begin{align}
    \tilde{\sigma}(\mu,\Delta):=\epsilon_{\mu}q^{-\sum_{j\geq 2}{\mu'_j \choose 2}}\sigma(\mu',\Delta),\label{eq:sigtilda}
  \end{align} 
and note that the theorem is equivalent to $\tilde{\sigma}(\mu,\Delta)=\sum_{\lambda}\tilde{a}_{\mu \lambda}(q)X_\lambda(\Delta)$. To prove this, it suffices to show that
  \begin{align*}
X_\lambda(\Delta)=  \sum_{\mu}a_{\lambda \mu}(q) \tilde{\sigma}(\mu,\Delta),
  \end{align*}
  since the matrices $(\tilde{a}_{\lambda \mu}(q))$ and $(a_{\lambda \mu}(q))$ for $\lambda$ and $\mu$ varying over partitions of $n$ are mutually inverse to each other. In view of Theorem \ref{thm:formofanswer}, it suffices to show that Theorem \ref{thm:fullprofiles} holds for sufficiently large prime powers $q$. Fix a prime power $q\geq n$ and, for each partition $\nu$ of $n$, let $\Delta_{\nu}$ be a diagonal matrix of type $\nu$ over $\Fq$. Given a partition $\lambda$ of $n$, consider the system of equations
    \begin{align}
X_\lambda(\Delta_\nu)=  \sum_{\mu}x_{\mu}\, \tilde{\sigma}(\mu,\Delta_\nu), \label{eq:system}
  \end{align}
  in the variables $x_{\mu}$ as $\nu$ varies over all partitions of $n$. We claim that the system has a unique solution given by $x_{\mu}=a_{\lambda \mu}(q)$. To prove the claim, it suffices to show that the determinant $(\tilde{\sigma}(\mu,\Delta_\nu))$  for $\mu$ and $\nu$ varying over partitions of $n$ is nonsingular and Equation \eqref{eq:system} holds for $x_{ \mu}=a_{\lambda \mu}(q)$.
    
    Consider the determinant $D=\det(\tilde{\sigma}(\mu,\Delta_\nu))$ whose rows and columns are indexed by partitions $\mu,\nu$ of $n$ respectively, in lexicographic order (so that $(1^n)$ comes first and $(n)$ comes last). By Theorem~\ref{thm:diagonal} and the defining equation \eqref{eq:sigtilda} for $\tilde{\sigma}(\mu,\Delta)$, it follows that $D$ is equal to the determinant $\det (b_{\mu'\nu}(q))$ up to sign and a power of $(q-1)$. By permuting the rows of the determinant, it is clear that $\det (b_{\mu'\nu}(q))$ equals $\det (b_{\mu\nu}(q))$ up to sign.  By Remark \ref{rem:bvanishes}, $b_{\mu\nu}(q)=0$ unless $\mu\geq \nu$ in the dominance order and $b_{\mu\mu}(q)\neq 0$ for each prime power $q$. Thus the matrix $(b_{\mu\nu}(q))$ is lower triangular with nonzero diagonal entries. Therefore it is nonsingular, implying that $D$ is nonsingular. 
    
We use monomial generating functions to prove that Equation \eqref{eq:system} holds for $x_{ \mu}=a_{\lambda \mu}(q)$. Accordingly, let $m_\lambda$ denote the monomial symmetric function indexed by the partition $\lambda$. The monomial generating function of the left hand side of Equation \eqref{eq:system} is given by
  \begin{align*}
    \sum_{\lambda}X_\lambda(\Delta_\nu)m_{\lambda}=\prod_{i\geq 1}W_{(\nu_i)}(x;q),
  \end{align*}
by Example \ref{eg:diagonalifgf}.
  On the other hand, the monomial generating function for the right hand side of Equation \eqref{eq:system} with $x_{\mu}=a_{\lambda\mu}(q)$ is given by
  \begin{align*}
    \sum_{\lambda}\sum_{\mu}a_{\lambda \mu}(q) \tilde{\sigma}(\mu,\Delta_\nu)m_\lambda &=\sum_{\mu}\tilde{\sigma}(\mu,\Delta_\nu)\sum_{\lambda}a_{\lambda \mu}(q) m_\lambda.
  \end{align*}
  By Lemma \ref{lem:dinkostka}, the last expression above is equal to   
    \begin{align*}
      \sum_{\mu}\tilde{\sigma}(\mu,\Delta_\nu)\sum_{\lambda}\sum_{\eta}K_{\eta\lambda}K_{\eta'\mu}(q)m_\lambda&=\sum_{\mu}\tilde{\sigma}(\mu,\Delta_\nu)\sum_{\eta}K_{\eta'\mu}(q)s_\eta\\
                                                                                                              &=\sum_{\mu}\tilde{\sigma}(\mu,\Delta_\nu)W_{\mu'}(x;q),
\end{align*}                                                                                        where the last equality follows from Equation \eqref{eq:whittoschur}. By Theorem \ref{thm:diagonal}, the last sum above is equal to                        
\begin{align*}                                                                                            \sum_{\mu} \epsilon_\mu (q-1)^{\sum_{j\geq 2}\mu'_j}b_{\mu'\nu}(q) W_{\mu'}(x;q) &=\sum_{\mu} (1-q)^{|\mu|-\mu'_1}b_{\mu'\nu}(q) W_{\mu'}(x;q)\\
  &= \prod_{i=1}^k W_{(\nu_i)}(x;q),
    \end{align*}
    by Proposition \ref{prop:prodofH}. This completes the proof.
\end{proof}    
The theorem can be restated in the following alternate form using the Hall scalar product.

\begin{theorem}\label{thm:fullprofiles1}
  
For each partition $\mu$ of $n$,
\begin{align*}
  \sigma(\mu,\Delta) &=\epsilon_{\mu'}\, q^{\sum_{j\geq 2}{\mu_j \choose 2}}\langle  F_\Delta , \widetilde{W}_{\mu}(x;q)\rangle,
\end{align*}
for every prime power $q$ and every matrix $\Delta \in \MM_n(\Fq)$.
\end{theorem}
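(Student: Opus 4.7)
The plan is to show that this identity is a direct consequence of Theorem \ref{thm:fullprofiles} after unpacking the Hall scalar product using the duality between $m_\lambda$ and $h_\lambda$ and the relation $\omega P_{\lambda'} = \widetilde{W}_\lambda$ from Equation \eqref{eq:wisomegap}. Concretely, since Theorem \ref{thm:fullprofiles} already gives
\begin{align*}
\sigma(\mu,\Delta) = \epsilon_{\mu'}\, q^{\sum_{j\geq 2}\binom{\mu_j}{2}}\sum_{\lambda\vdash n}\tilde{a}_{\mu'\lambda}(q)X_\lambda(\Delta),
\end{align*}
it suffices to prove
\begin{align*}
\langle F_\Delta,\widetilde{W}_\mu(x;q)\rangle = \sum_{\lambda\vdash n}\tilde{a}_{\mu'\lambda}(q)X_\lambda(\Delta).
\end{align*}

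First I would recall from Proposition \ref{prop:ifgf} (or rather its consequence stated just after Proposition \ref{prop:sortparts}) that $F_\Delta = \sum_{\lambda\vdash n} X_\lambda(\Delta)\, m_\lambda$. Next I would rewrite the dual $q$-Whittaker function $\widetilde{W}_\mu$ in the basis $\{h_\lambda\}$ as follows: by Equation \eqref{eq:wisomegap}, $\widetilde{W}_\mu = \omega P_{\mu'}$, and by the defining relation $P_{\mu'}(x;t) = \sum_\lambda \tilde{a}_{\mu'\lambda}(t)\, e_\lambda$ together with $\omega e_\lambda = h_\lambda$, one gets
\begin{align*}
\widetilde{W}_\mu(x;q) = \sum_{\lambda\vdash n} \tilde{a}_{\mu'\lambda}(q)\, h_\lambda.
\end{align*}
Finally, using the duality $\langle m_\lambda, h_\nu\rangle = \delta_{\lambda\nu}$ of the Hall scalar product gives
\begin{align*}
\langle F_\Delta,\widetilde{W}_\mu(x;q)\rangle = \sum_{\lambda,\nu}X_\lambda(\Delta)\tilde{a}_{\mu'\nu}(q)\langle m_\lambda, h_\nu\rangle = \sum_{\lambda\vdash n}\tilde{a}_{\mu'\lambda}(q)X_\lambda(\Delta),
\end{align*}
which is exactly what is needed. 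Substituting this into Theorem \ref{thm:fullprofiles} then yields the desired formula.

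There is essentially no obstacle here; the statement is a repackaging of Theorem \ref{thm:fullprofiles} in scalar-product form, and the only nontrivial input is the identity $\widetilde{W}_\mu = \omega P_{\mu'}$ from Equation \eqref{eq:wisomegap} together with the fact that $\omega$ swaps $e_\lambda$ and $h_\lambda$. Both have already been recorded in the paper, so the proof amounts to the three-line calculation above.
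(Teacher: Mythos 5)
Your proof is correct and takes essentially the same route as the paper: the paper's own proof is a one-line observation that Theorem \ref{thm:fullprofiles} can be rewritten as $\sigma(\mu,\Delta)=\epsilon_{\mu'}\,q^{\sum_{j\geq 2}{\mu_j \choose 2}}\langle F_\Delta,\omega P_{\mu'}\rangle$ followed by an appeal to Equation \eqref{eq:wisomegap}, and your computation simply makes explicit the expansion $\omega P_{\mu'}=\sum_\lambda \tilde a_{\mu'\lambda}(q)h_\lambda$ and the duality $\langle m_\lambda,h_\nu\rangle=\delta_{\lambda\nu}$ that this rewriting relies on.
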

\begin{proof}
Follows by writing Theorem \ref{thm:fullprofiles} as $\sigma(\mu,\Delta)=\epsilon_{\mu'}\, q^{\sum_{j\geq 2}{\mu_j \choose 2}}\langle  F_\Delta , \omega P_{\mu'}\rangle$ and using Equation \eqref{eq:wisomegap}.
\end{proof}

   Theorem \ref{thm:fullprofiles1} gives the following combinatorial interpretation for the coefficients in the $q$-Whittaker expansion of the invariant flag generating function  $F_{\tau}(x;t)$ (see Equation \eqref{eq:ftau}) associated to a similarity class type $\tau$.

\begin{corollary}\label{cor:comb}
For each similarity class type $\tau=\{(d_1,\lambda^1),(d_2,\lambda^2),\ldots,(d_k,\lambda^k)\}$ of size $n$,
  \begin{align*}
 \prod_{i=1}^k p_{d_i}[\tilde{H}_{\lambda^i}(x;t)]=\sum_{\mu\vdash n}\epsilon_{\mu'}t^{-\sum_{j\geq 2}{\mu_j \choose 2}}\sigma(\mu,\tau) W_\mu(x;t), 
  \end{align*}
 where $\sigma(\mu,\tau)$ denotes the number of subspaces which have profile $\mu$ with respect to a matrix of similarity class type $\tau$ over the finite field $\mathbb{F}_t$ for sufficiently large prime powers $t$.
\end{corollary}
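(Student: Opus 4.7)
The plan is to recognize that the left-hand side of the claimed identity is precisely the invariant flag generating function $F_{\tau}(x;t)$ defined in Equation \eqref{eq:ftau}, and then extract its $q$-Whittaker expansion by combining duality with Theorem \ref{thm:fullprofiles1}.

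First I would fix a sufficiently large prime power $q$ so that a matrix $\Delta \in \MM_n(\mathbb{F}_q)$ of similarity class type $\tau$ exists (the remark following Proposition \ref{prop:ifgf} guarantees this for all large enough $q$). Proposition \ref{prop:ifgf} then yields
\begin{align*}
F_\Delta(x) \;=\; F_\tau(x;q) \;=\; \prod_{i=1}^k p_{d_i}[\tilde{H}_{\lambda^i}(x;t)]_{\,|t=q},
\end{align*}
so the left-hand side of the corollary, evaluated at $t=q$, is exactly $F_\Delta(x)$.

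Next, since $\{W_\mu(x;q)\}_{\mu\vdash n}$ and $\{\widetilde{W}_\mu(x;q)\}_{\mu\vdash n}$ are dual bases of the degree-$n$ component of $\Lambda_{\mathbb{Q}(q)}$ with respect to the Hall scalar product, I can expand
\begin{align*}
F_\Delta(x) \;=\; \sum_{\mu\vdash n} \langle F_\Delta(x), \widetilde{W}_\mu(x;q)\rangle\, W_\mu(x;q).
\end{align*}
Theorem \ref{thm:fullprofiles1} identifies the scalar product:
\begin{align*}
\langle F_\Delta(x), \widetilde{W}_\mu(x;q)\rangle \;=\; \epsilon_{\mu'}\, q^{-\sum_{j\geq 2}\binom{\mu_j}{2}}\sigma(\mu,\Delta),
\end{align*}
where one uses $\epsilon_{\mu'}\in\{\pm 1\}$ so that $\epsilon_{\mu'}^{-1}=\epsilon_{\mu'}$. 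Substituting this back and writing $\sigma(\mu,\tau)=\sigma(\mu,\Delta)$ gives the claimed identity at $t=q$. Since this holds for all sufficiently large prime powers $q$, and both sides are polynomial (in fact, have coefficients in $\mathbb{Z}[t]$) when expanded in the $W_\mu$-basis, the identity extends formally to $\Lambda_{\mathbb{Q}(t)}$ by interpolation over infinitely many values of $t$.

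There is essentially no conceptual obstacle here, since the heavy lifting has already been done in Theorem \ref{thm:fullprofiles1}; the corollary is just a restatement obtained by reading off the coefficient of $W_\mu$ in the $q$-Whittaker expansion of $F_\tau$ via duality. The only mild care needed is bookkeeping of the sign $\epsilon_{\mu'}$ and the exponent $\sum_{j\geq 2}\binom{\mu_j}{2}$ when one passes from the scalar product formula to the Whittaker coefficient, and, if desired, justifying the passage from prime-power specializations of $t$ to an identity in $\Lambda_{\mathbb{Q}(t)}$.
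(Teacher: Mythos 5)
Your proposal is correct and follows exactly the route the paper intends: the corollary is stated as an immediate consequence of Theorem \ref{thm:fullprofiles1}, obtained by expanding $F_\tau(x;q)=F_\Delta(x)$ in the $W_\mu$-basis via duality with $\widetilde{W}_\mu$ and reading off the coefficients, then interpolating from sufficiently large prime powers $q$ to the parameter $t$. The sign and exponent bookkeeping (using $\epsilon_{\mu'}^{-1}=\epsilon_{\mu'}$) is handled correctly.
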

In view of the examples considered in Section \ref{sec:fdelta}, Corollary \ref{cor:comb} gives a combinatorial finite-field interpretation for the coefficients in the $q$-Whittaker expansion of the power sum symmetric functions, the complete homogeneous symmetric functions, products of modified Hall-Littlewood polynomials, and products of single-part $q$-Whittaker functions.

\begin{example}
  Specializing Corollary \ref{cor:comb} to the regular nilpotent and simple similarity class types and using Equations \eqref{eq:regnilprofiles} and \eqref{eq:simpleprofiles}, we obtain the following expansions for homogeneous and power sum symmetric functions:
  \begin{align*}
    h_n&=\sum_{\mu \vdash n}(-1)^{n-\mu_1}\left(\prod_{i\geq 2}t^{\mu_i+1 \choose 2}{\mu_{i-1} \brack \mu_i}_t\right) W_\mu(x;t),\\
  p_n&=\sum_{\mu \vdash n}(-1)^{n-\mu_1}\frac{t^n-1}{t^{\mu_1}-1}\left(\prod_{i\geq 2}t^{\mu_i \choose 2}{\mu_{i-1} \brack \mu_i}_t\right) W_\mu(x;t).
  \end{align*}
\end{example}
The $q$-Whittaker functions have recently arisen in the work of Karp and Thomas \cite{karp2023} in the context of counting certain partial flags compatible with a nilpotent endomorphism over a finite field. They also obtain an elegant probabilistic bijection between nonnegative integer matrices and pairs of semistandard tableaux. It would be interesting to relate their work to the counting problem considered in this paper. 

\section{Arbitrary profiles}\label{sec:allprofiles}
An explicit formula for $\sigma(\mu,\Delta)$ when $\mu$ is a partition of the ambient vector space dimension was obtained in Theorem \ref{thm:fullprofiles}. In this section we extend the result to arbitrary partitions $\mu$.

\begin{lemma}\label{lem:symfunction}
For each partition $|\mu|\leq n$, there exists a unique homogeneous symmetric function $G_{\mu}(x;t)\in \Lambda_{\QQ(t)}$ of degree $n$, such that
\begin{align*}
 \sigma(\mu,\Delta)=\langle F_\Delta,G_{\mu}(x;q) \rangle,
\end{align*}
for every prime power $q$ and each matrix $\Delta\in \MM_n(\Fq)$.
\end{lemma}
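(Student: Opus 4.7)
The strategy is to construct $G_\mu$ explicitly as a $\QQ(t)$-linear combination of the complete homogeneous symmetric functions $h_\lambda$ with $\lambda \vdash n$, using Theorem \ref{thm:formofanswer}, and then argue uniqueness by specialization.

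First I would observe that $X_\lambda(\Delta) = \langle F_\Delta, h_\lambda\rangle$ for every $\Delta\in \MM_n(\Fq)$: this is immediate from the expansion $F_\Delta = \sum_{\lambda \vdash n} X_\lambda(\Delta) m_\lambda$ combined with the fact that $\{h_\lambda\}$ and $\{m_\lambda\}$ are dual bases under the Hall scalar product. Next, given the partition $\mu$ with $|\mu|\leq n$, Theorem \ref{thm:formofanswer} provides polynomials $g_{\mu\lambda}(t)\in \ZZ[t]$ (indexed by $\lambda \vdash n$) such that $\sigma(\mu,\Delta) = \sum_{\lambda\vdash n} g_{\mu\lambda}(q) X_\lambda(\Delta)$ for every prime power $q$ and every $\Delta\in\MM_n(\Fq)$. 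Setting
\begin{equation*}
G_\mu(x;t) := \sum_{\lambda\vdash n} g_{\mu\lambda}(t)\, h_\lambda(x),
\end{equation*}
one obtains a homogeneous symmetric function of degree $n$ in $\Lambda_{\QQ(t)}$, and
\begin{equation*}
\langle F_\Delta, G_\mu(x;q)\rangle = \sum_{\lambda\vdash n} g_{\mu\lambda}(q)\langle F_\Delta, h_\lambda\rangle = \sum_{\lambda\vdash n} g_{\mu\lambda}(q) X_\lambda(\Delta) = \sigma(\mu,\Delta).
\end{equation*}
This establishes existence.

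For uniqueness, suppose $G$ and $G'$ are two homogeneous symmetric functions of degree $n$ satisfying the stated property. Then $\langle F_\Delta, (G - G')(x;q)\rangle = 0$ for every prime power $q$ and every $\Delta\in\MM_n(\Fq)$. By Example \ref{eg:regularsplit}, for any $\lambda\vdash n$, a regular split matrix of type $\lambda$ over $\Fq$ (which exists whenever $q \geq \ell(\lambda)$, hence certainly for $q\geq n$) has $F_\Delta = h_\lambda$. Consequently, $\langle h_\lambda, (G-G')(x;q)\rangle = 0$ for every $\lambda\vdash n$ and every prime power $q \geq n$. Since $\{h_\lambda\}_{\lambda\vdash n}$ spans the degree-$n$ component of $\Lambda$ and the Hall pairing between $\{h_\lambda\}$ and $\{m_\lambda\}$ is non-degenerate, every coefficient in the $m_\lambda$-expansion of $(G - G')(x;q)$ vanishes at each such $q$. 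Since $G$ and $G'$ have coefficients in $\QQ(t)$, these are rational functions of $t$ that vanish at infinitely many values, hence are identically zero. Therefore $G = G'$ in $\Lambda_{\QQ(t)}$.

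The main obstacle, if any, is the uniqueness argument, and specifically ensuring that one can produce enough operators $\Delta$ to separate degree-$n$ symmetric functions \emph{at each prime power $q$}. This is handled cleanly by Example \ref{eg:regularsplit} together with the observation that a rational function in $t$ vanishing for infinitely many prime-power specializations must be zero; the rest of the argument is a bookkeeping assembly of Theorem \ref{thm:formofanswer} and the Hall-duality of $\{h_\lambda\}$ and $\{m_\lambda\}$.
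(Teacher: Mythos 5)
Your proof is correct. The existence half is identical to the paper's: both invoke Theorem \ref{thm:formofanswer}, set $G_\mu(x;t)=\sum_{\lambda\vdash n}g_{\mu\lambda}(t)h_\lambda$, and use the $h$--$m$ duality to convert $X_\lambda(\Delta)=\langle F_\Delta,h_\lambda\rangle$. The uniqueness half differs in which family of matrices you use to separate degree-$n$ symmetric functions. The paper fixes a prime power $q$ and lets $\Delta$ range over nilpotent matrices of every Jordan type $\lambda\vdash n$ (these exist over every $\Fq$), so that $F_\Delta=\tilde{H}_\lambda(x;q)$; it then needs the fact that the $\tilde{H}_\lambda(x;q)$ form a basis of the degree-$n$ component, which it justifies via the unitriangularity of the modified Kostka--Foulkes matrix with powers of $q$ on the diagonal. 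You instead use regular split matrices, so that $F_\Delta=h_\lambda$, and conclude directly from the nondegeneracy of the $h$--$m$ pairing; the price is that such matrices only exist for $q\geq\ell(\lambda)$, so you get the vanishing of the $\QQ(t)$-coefficients only at prime powers $q\geq n$, which is still infinitely many points and hence suffices for rational functions. Your route is marginally more elementary (no Kostka--Foulkes input), while the paper's works uniformly at every fixed $q$; both are complete and correct.
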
 
\begin{proof}
   By Theorem \ref{thm:formofanswer}, there exist polynomials $g_{\mu\lambda}(t)\in \ZZ[t]$, depending only on $\mu$ and $\lambda$, such that
\begin{align*}
  \sigma(\mu,\Delta)=\sum_{\lambda \vdash n}g_{\mu \lambda}(q)X_\lambda(\Delta),
\end{align*}
for each prime power $q$ and each matrix $\Delta\in \MM_n(\Fq)$. If we define $ G_{\mu}(x;t):=\sum_{\lambda \vdash n}g_{\mu\lambda}(t)h_\lambda,$ then it is easily seen that $G_{\mu}(x;t)$ has the desired property, proving existence. To show uniqueness, suppose $G_{\mu}(x;t)$ and $\tilde{G}_{\mu}(x;t)$ are homogeneous symmetric functions of degree $n$ satisfying
\begin{align*}
\langle F_\Delta, G_{\mu}(x;q)\rangle=\langle F_\Delta, \tilde{G}_{\mu}(x;q)\rangle,
\end{align*}
for every prime power $q$ and every matrix $\Delta\in \MM_n(\Fq)$. Fix $q$ and allow $\Delta$ to vary over nilpotent matrices in $\MM_n(\Fq)$, to obtain
\begin{align*}
 \langle \tilde{H}_\lambda(x;q), G_{\mu}(x;q)-\tilde{G}_{\mu}(x;q)\rangle=0,
\end{align*}
for each partition $\lambda$ of $n$. The relations $ \tilde{H}_\lambda(x;q)=\sum_{\mu}\tilde{K}_{\mu\lambda}(q)s_\mu,$ are upper triangular with the diagonal entries $\tilde{K}_{\mu\mu}(q)$ being powers of $q$. Therefore, the $\tilde{H}_\lambda(x;q)$ form a basis for homogeneous symmetric functions in $\Lambda_{\QQ(t)}$ of degree $n$. It follows that $G_{\mu}(x;q)=\tilde{G}_{\mu}(x;q)$. Since this holds for each prime power $q$, it follows that $G_{\mu}(x;t)=\tilde{G}_\mu(x;t)$, proving uniqueness.
\end{proof}

The following result of Bender, Coley, Robbins and Rumsey was derived by a probabilistic argument and shows that $\sigma(\mu,\Delta)$ satisfies a remarkably simple system of equations.
\begin{theorem}\label{thm:bendereqns}\cite[Eq. 4]{MR1141317}
Let $n$ be a positive integer and suppose $\nu$ is a partition with $|\nu|<n$.  For each matrix $\Delta\in \MM_n(\Fq),$
  \begin{align*}
    \sum_{\mu:|\mu|\leq n} (-1)^{\mu_1}q^{-\mu \cdot \nu+{\mu_1 \choose 2}}  \sigma(\mu,\Delta)=0,
  \end{align*}
  where the sum is taken over all partitions $\mu$ of size at most $n$ (including the empty partition) and $\mu\cdot \nu:=\sum_{j\geq 1}\mu_j\nu_j.$
\end{theorem}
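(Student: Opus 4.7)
The plan is to give a probabilistic/Möbius-type argument, with the $q$-binomial theorem producing the final cancellation. First I would rewrite the left-hand side as a single sum over subspaces of $\Fq^n$ by unpacking $\sigma(\mu,\Delta)$:
\begin{align*}
\sum_{\mu:|\mu|\leq n}(-1)^{\mu_1}q^{-\mu\cdot\nu+\binom{\mu_1}{2}}\sigma(\mu,\Delta)=\sum_{W\subseteq\Fq^n}(-1)^{\dim W}q^{\binom{\dim W}{2}}q^{-\mu(W)\cdot\nu},
\end{align*}
where $\mu(W)$ denotes the $\Delta$-profile of $W$. Writing $K_j(W):=W+\Delta W+\cdots+\Delta^{j-1}W$, so that $\mu_j(W)=\dim K_j(W)-\dim K_{j-1}(W)$, a short Abel-summation step gives
\begin{align*}
\mu(W)\cdot\nu=\sum_{j\geq 1}c_j\,\dim K_j(W),\qquad c_j:=\nu_j-\nu_{j+1}\geq 0,
\end{align*}
with $\sum_j c_j=\nu_1$ and, crucially, $\sum_j j\,c_j=|\nu|<n$.

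The core step is a probabilistic rewriting of $q^{-\mu(W)\cdot\nu}$. Since $\Pr[v\in K_j(W)^{\perp}]=q^{-\dim K_j(W)}$ for $v$ uniform in $\Fq^n$, independence yields
\begin{align*}
q^{-\mu(W)\cdot\nu}=\Pr_v\!\bigl[\,v_{j,k}\in K_j(W)^{\perp}\ \text{for all }(j,k)\,\bigr],
\end{align*}
where, for each $j\geq 1$, the $c_j$ vectors $v_{j,1},\ldots,v_{j,c_j}$ are drawn i.i.d.\ uniformly from $\Fq^n$. Because $v\in K_j(W)^{\perp}$ is equivalent to $(\Delta^{\top})^i v\in W^{\perp}$ for $0\leq i\leq j-1$, the joint event collapses to $U(v)\subseteq W^{\perp}$, that is $W\subseteq U(v)^{\perp}$, where
\begin{align*}
U(v):=\sum_{j,k}\operatorname{span}\!\bigl(v_{j,k},\,\Delta^{\top}v_{j,k},\,\ldots,\,(\Delta^{\top})^{j-1}v_{j,k}\bigr).
\end{align*}

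Next I would interchange the finite sum over $W$ with the expectation over $v$, turning the original expression into
\begin{align*}
\mathbb{E}_v\Bigl[\,\sum_{W\subseteq U(v)^{\perp}}(-1)^{\dim W}q^{\binom{\dim W}{2}}\Bigr].
\end{align*}
Specializing the $q$-binomial identity $\prod_{i=0}^{d-1}(1+q^i x)=\sum_{k=0}^{d}q^{\binom{k}{2}}\binom{d}{k}_q x^k$ at $x=-1$ identifies the inner sum with $\prod_{i=0}^{d-1}(1-q^i)$, where $d=\dim U(v)^{\perp}$; this product vanishes whenever $d\geq 1$ (from the $i=0$ factor). The dimension bound $\dim U(v)\leq\sum_{j,k}j=\sum_j j\,c_j=|\nu|<n$ gives $d\geq n-|\nu|>0$ for every realization of $v$, so the inner sum is identically zero and the whole expression vanishes.

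The delicate part is really the middle step: converting $q^{-\mu(W)\cdot\nu}$ into the probability of a conjunction of events requires the nonnegativity of the increments $c_j$ (i.e.\ the hypothesis that $\nu$ is a partition), and the subsequent dualization $K_j(W)^{\perp}\leftrightarrow (\Delta^{\top})^i W^{\perp}$ is what allows the two constraints on $(v,W)$ to be uncoupled into $W\subseteq U(v)^{\perp}$. Once that uncoupling is in place, the hypothesis $|\nu|<n$ enters precisely to ensure $\dim U(v)^{\perp}>0$, and the $q$-binomial identity finishes the argument.
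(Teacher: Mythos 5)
Your argument is correct and is essentially the proof this paper attributes to the original source: the paper states Theorem \ref{thm:bendereqns} as a citation to Bender--Coley--Robbins--Rumsey, describing their proof as ``a probabilistic argument involving M\"obius inversion on the lattice of subspaces,'' which is precisely what you have reconstructed (your inner sum $\sum_{W\subseteq V}(-1)^{\dim W}q^{\binom{\dim W}{2}}=\prod_{i=0}^{d-1}(1-q^i)$ is the M\"obius-function identity, and the dualization via $\Delta^{\top}$ together with the bound $\dim U(v)\leq\sum_j j\,c_j=|\nu|<n$ is the probabilistic decoupling). All steps check out, including the use of the fact that every subspace has a partition-valued profile of size at most $n$, so the reindexing of the sum over $\mu$ as a sum over $W$ is legitimate.
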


Bender, Coley, Robbins and Rumsey used Theorem \ref{thm:bendereqns} to derive an explicit formula for $\sigma(\mu,\Delta)$ in the cases where $\Delta$ is simple or regular nilpotent. Our approach is to use Theorem~\ref{thm:bendereqns} to solve for $\sigma(\mu,\Delta)$ for partitions satisfying $|\mu|<n$ using the formula already obtained for $\sigma(\mu,\Delta)$ in Theorem \ref{thm:fullprofiles} when $|\mu|=n$. The following theorem is our main result.

\begin{theorem}\label{thm:allprofiles}
For each partition $\mu$, 
\begin{align*}
 \sigma(\mu,\Delta)=\epsilon_{\mu'}\, q^{\sum_{j\geq 2}{\mu_j \choose 2}}\langle  F_\Delta,\widetilde{W}_{\mu}(x;q)\, h_{n-|\mu|} \rangle,
\end{align*}
for each prime power $q$ and each matrix $\Delta\in \MM_n(\Fq)$.
\end{theorem}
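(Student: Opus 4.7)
The plan is to reduce Theorem~\ref{thm:allprofiles} to the already-established case $|\mu|=n$ of Theorem~\ref{thm:fullprofiles1} by restricting to invariant subspaces, and to produce the extra factor $h_{n-|\mu|}$ via a symmetric-function adjoint identity rather than through the Bender-Coley-Robbins-Rumsey equations. The starting observation is that if $W\subseteq\Fq^n$ has $\Delta$-profile $\mu$, then the smallest $\Delta$-invariant subspace containing $W$, namely $U(W)=W+\Delta W+\Delta^2 W+\cdots$, has dimension exactly $|\mu|$; moreover, for each $\Delta$-invariant $U$ with $\dim U=|\mu|$, a subspace $W\subseteq U$ has $\Delta$-profile $\mu$ if and only if it has $\Delta|_U$-profile $\mu$. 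Partitioning the subspaces counted by $\sigma(\mu,\Delta)$ according to the value of $U(W)$ yields
\begin{align*}
\sigma(\mu,\Delta)=\sum_{\substack{U\ \Delta\text{-invariant}\\ \dim U=|\mu|}} \sigma(\mu,\Delta|_U),
\end{align*}
and Theorem~\ref{thm:fullprofiles1} evaluates each summand, since $|\mu|=\dim U$ there, as $\epsilon_{\mu'}q^{\sum_{j\geq 2}\binom{\mu_j}{2}}\langle F_{\Delta|_U},\widetilde{W}_\mu(x;q)\rangle$.

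The central step is then the symmetric-function identity
\begin{align*}
\sum_{\substack{U\ \Delta\text{-invariant}\\ \dim U=k}} F_{\Delta|_U}(x)\;=\;h_{n-k}^{\perp}F_\Delta(x),\qquad 0\le k\le n,
\end{align*}
where $h_{n-k}^{\perp}$ denotes the adjoint of multiplication by $h_{n-k}$ under the Hall scalar product. I would prove this by matching coefficients in the $m_\lambda$-basis. For $\lambda\vdash k$, the coefficient of $m_\lambda$ on the left is $\sum_U X_\lambda(\Delta|_U)$, which equals the number of $\Delta$-invariant flags in $\Fq^n$ of composition $(\lambda_1,\ldots,\lambda_\ell,n-k)$, since every such flag decomposes canonically as a penultimate dim-$k$ invariant subspace $U$ together with an invariant flag of composition $\lambda$ inside $U$. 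The coefficient of $m_\lambda$ on the right equals $\langle h_{n-k}^{\perp}F_\Delta,h_\lambda\rangle=\langle F_\Delta,h_\lambda h_{n-k}\rangle=X_{\lambda\cup(n-k)}(\Delta)$, and these two counts agree by Proposition~\ref{prop:sortparts}.

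With this identity in hand, substituting into the restriction decomposition and applying the adjoint relation one more time gives
\begin{align*}
\sigma(\mu,\Delta)=\epsilon_{\mu'}q^{\sum_{j\ge 2}\binom{\mu_j}{2}}\langle h_{n-|\mu|}^{\perp}F_\Delta,\widetilde{W}_\mu\rangle=\epsilon_{\mu'}q^{\sum_{j\ge 2}\binom{\mu_j}{2}}\langle F_\Delta,\widetilde{W}_\mu\,h_{n-|\mu|}\rangle,
\end{align*}
which is the claim. The main obstacle is the identity $\sum_U F_{\Delta|_U}=h_{n-k}^{\perp}F_\Delta$: once the flag-counting bijection (a dim-$k$ invariant $U$ together with an invariant flag in $U$, versus an invariant flag in $\Fq^n$ whose top step has dimension $n-k$) is isolated and Proposition~\ref{prop:sortparts} is invoked to identify $(\lambda,n-k)$ with $\lambda\cup(n-k)$, the remainder is formal Hall-scalar-product bookkeeping. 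An alternative would be to verify that the candidate right-hand side satisfies the BCRR system of Theorem~\ref{thm:bendereqns}, using Theorem~\ref{thm:fullprofiles1} as boundary data and then proving invertibility of that system, but the restriction approach seems cleaner and sidesteps the linear-algebraic uniqueness issue entirely.
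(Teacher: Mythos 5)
Your proof is correct, but it takes a genuinely different route from the paper. The paper establishes the result by substituting the ansatz of Lemma \ref{lem:symfunction} into the Bender--Coley--Robbins--Rumsey system (Theorem \ref{thm:bendereqns}), proving that system is nonsingular via the Buck--Coley--Robbins determinant evaluation, exhibiting the explicit solution $c_{\eta\mu}=\psi_{\eta/\mu}$ through a $q$-binomial induction (Lemma \ref{lem:psisum}), and finally recognizing $\sum_{\eta}\psi_{\eta/\mu}(t)\widetilde{W}_\eta$ as $\widetilde{W}_\mu h_{n-|\mu|}$ via the Hall--Littlewood Pieri rule. You instead observe that each $W$ of profile $\mu$ determines a unique invariant hull $U(W)=\sum_{i\geq 0}\Delta^iW$ of dimension $|\mu|$, so that
\begin{align*}
\sigma(\mu,\Delta)=\sum_{\substack{U\ \Delta\text{-invariant}\\ \dim U=|\mu|}}\sigma(\mu,\Delta|_U),
\end{align*}
reduce each summand to Theorem \ref{thm:fullprofiles1}, and then absorb the sum over $U$ into the symmetric-function identity $\sum_U F_{\Delta|_U}=h_{n-|\mu|}^{\perp}F_\Delta$, which you verify correctly by matching $m_\lambda$-coefficients against flag counts and invoking Proposition \ref{prop:sortparts}; the claim then follows by adjointness. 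I checked the delicate points: the hull $U(W)$ is indeed the unique invariant subspace of dimension $|\mu|$ containing $W$ (so the decomposition neither misses nor double-counts), the profile of $W\subseteq U$ is insensitive to whether it is computed in $U$ or in $\Fq^n$, and both sides of your adjoint identity are homogeneous of degree $|\mu|$ with coefficient of $m_\lambda$ equal to $X_{\lambda\cup(n-|\mu|)}(\Delta)$. Your argument buys a shorter, more conceptual derivation that bypasses the BCRR equations, the determinant nonsingularity argument, and Lemma \ref{lem:psisum} entirely (the Pieri combinatorics enters only implicitly through $h^{\perp}$); what it does not yield is the explicit solution of the BCRR system and the identity of Corollary \ref{cor:psisatisfies}, which the paper reuses in Section \ref{sec:partialprofiles} to handle partial profiles and anti-invariant subspaces.
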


\begin{proof}
  By Lemma \ref{lem:symfunction}, there is a unique symmetric function $C_{\mu}(x;t)$ of degree $n$ such that
  \begin{align}
     \sigma(\mu,\Delta)=(-1)^{\sum_{j\geq 2}\mu_j}q^{\sum_{j\geq 2}{\mu_j \choose 2}}\langle  F_\Delta,C_\mu(x;q) \rangle,\label{eq:sigform}
  \end{align}
for each prime power $q$ and each matrix $\Delta\in \MM_n(\Fq)$. We wish to show that $C_\mu(x;t)=\widetilde{W}_{\mu}(x;t)h_{n-|\mu|}$, where $\widetilde{W}_\lambda$ denotes the dual $q$-Whittaker function indexed by $\lambda$. Consider the coefficients $c_{\eta\mu}(t)$ defined by
  \begin{align}
    C_{\mu}(x;t)=\sum_{\eta\vdash n}c_{\eta\mu}(t)\widetilde{W}_{\eta}(x;t), \label{eq:coeffs}
  \end{align}
   When $\mu$ is a partition of $n$, it follows from Theorem \ref{thm:fullprofiles1} that $C_{\mu}=\widetilde{W}_{\mu}$; therefore
  \begin{align}
    c_{\eta\mu}(t)=\delta_{\eta\mu} \quad (|\mu|=n),\label{eq:cetamu}
  \end{align}
  where $\delta$ denotes the Kronecker delta. Substituting the expression \eqref{eq:sigform} for $\sigma(\mu,\Delta)$ into the equations in Theorem \ref{thm:bendereqns}, we obtain 
  \begin{align*}
        \sum_{\mu:|\mu|\leq n} (-1)^{|\mu|}q^{-\mu \cdot \nu+\sum_{j\geq 1}{\mu_j \choose 2}} \langle  F_\Delta,C_\mu(x;q)\rangle=0, 
  \end{align*}
  for each partition $|\nu|<n.$ Since the above equations hold for each prime power $q$ and each matrix $\Delta\in \MM_n(\Fq)$, it is easily seen that 
  \begin{align*}
      \sum_{\mu:|\mu|\leq n} (-1)^{|\mu|}t^{-\mu \cdot \nu+\sum_{j\geq 1}{\mu_j \choose 2}} C_\mu(x;t)=0 \quad (|\nu|<n).
  \end{align*}
By Equation \eqref{eq:coeffs}, the expression on the left hand side above can be written as
    \begin{align*}
      &\sum_{\mu:|\mu|\leq n} (-1)^{|\mu|}t^{-\mu \cdot \nu+\sum_{j\geq 1}{\mu_j \choose 2}} \sum_{\eta\vdash n}c_{\eta\mu}(t)\widetilde{W}_{\eta}(x;t)\\
      &\quad=\sum_{\eta\vdash n}\sum_{\mu:|\mu|\leq n} (-1)^{|\mu|}t^{-\mu \cdot \nu+\sum_{j\geq 1}{\mu_j \choose 2}} c_{\eta\mu}(t)\widetilde{W}_{\eta}(x;t).
  \end{align*}
  The linear independence of the dual $q$-Whittaker functions implies that
  \begin{align*}
    \sum_{\mu:|\mu|\leq n} (-1)^{|\mu|}t^{-\mu \cdot \nu+\sum_{j\geq 1}{\mu_j \choose 2}} c_{\eta\mu}(t)=0 \quad \mbox{ for }|\nu|<n, |\eta|=n.
  \end{align*}
 By Equation \eqref{eq:cetamu}, this is equivalent to
  \begin{align*}
    \sum_{\mu:|\mu|< n} (-1)^{|\mu|}t^{-\mu \cdot \nu+\sum_{j\geq 1}{\mu_j \choose 2}} c_{\eta\mu}(t)+ (-1)^{n}t^{-\eta \cdot \nu+\sum_{j\geq 1}{\mu_j \choose 2}} =0 \quad (|\nu|<n),
  \end{align*}
for each partition $\eta$ of $n$. The idea now is to solve the above system of equations for the $c_{\eta\mu}(t)$ with $|\mu|<n$. Note that the number of unknowns is equal to the number of equations. We claim that the system has a unique solution given by 
  \begin{align}
    c_{\eta\mu}(t)=\psi_{\eta/\mu}(t):=\prod_{i\geq 1}{\eta_i-\eta_{i+1}\brack \eta_i-\mu_i}_t.\label{eq:uniqsol}
  \end{align}
Observe that $\psi_{\eta/\mu}(t)$ vanishes unless $\eta/\mu$ is a horizontal strip. Moreover, it is easy to see that $\psi_{\eta/\mu}(t)=\delta_{\eta\mu}$ when $\mu$ is a partition of $n$, in accordance with Equation~\eqref{eq:cetamu}.  Therefore, to prove the claim, it suffices to show the following.
  \begin{enumerate}
  \item The determinant with entries $(-1)^{|\mu|}t^{-\mu \cdot \nu+\sum_{j\geq 1}{\mu_j \choose 2}}$ for $\mu,\nu$ varying over partitions of integers less than $n$ is nonzero.
  \item For each partition $\eta$ of $n$,
    \begin{align*}
          \sum_{\mu:|\mu|\leq n} (-1)^{|\mu|}t^{-\mu \cdot \nu+\sum_{j\geq 1}{\mu_j \choose 2}} \psi_{\eta/\mu}(t)=0,
    \end{align*}
    for each partition $\nu$ of size less than $n$.
  \end{enumerate}
  The last two statements above are proved in Corollaries \ref{cor:nonsingular} and \ref{cor:psisatisfies} respectively. From Equation \eqref{eq:coeffs}, we obtain
  \begin{align*}
    C_{\mu}(x;t)&=\sum_{\eta\vdash n}\psi_{\eta/\mu}(t)\widetilde{W}_{\eta}(x;t)=\widetilde{W}_{\mu}(x;t)h_{n-|\mu|},
  \end{align*}
where the last equality follows by writing the Pieri rule for Hall-Littlewood polynomials as $P_{\mu'}e_{n-|\mu|}=\sum_{\eta\vdash n}\psi_{\eta/\mu}(t)P_{\eta'}$ (Macdonald \cite[p. 340, Eq. 6.24]{MR1354144}) and applying $\omega$. The result now follows by Equation \eqref{eq:sigform}.
\end{proof}
We now prove the two claims made in the proof of Theorem \ref{thm:allprofiles}. We introduce some notation, following Buck, Coley and Robbins \cite{MR1226346}, to state the relevant results. Let $\mathcal{P}$ denote the set of all partitions of nonnegative integers. Given partitions $\lambda=(\lambda_1,\lambda_2,\ldots)$ and $\mu=(\mu_1,\mu_2,\ldots)$ in $\mathcal{P}$, write $\lambda\subseteq \mu$ to mean $\lambda_i\leq \mu_i$ for each $i\geq 1$. With respect to the ordering $\subseteq$, the set $\mathcal{P}$ forms a distributive lattice called Young's lattice, denoted $Y$ (Stanley \cite[Sec. 7.2]{MR1676282}). An order ideal $I$ in $Y$ is a subset $I\subseteq Y$ such that if $\mu\in I$ and $\lambda \subseteq \mu$ for some $\lambda \in Y$, then $\lambda \in I$. Given a partition $\lambda=(\lambda_1,\lambda_2,\ldots)$, the shift operator is defined by ${\rm sh}(\lambda):=(\lambda_2,\lambda_3,\ldots)$. To each pair $(i,j)$ of nonnegative integers, we associate an indeterminate $x_{i,j}$. Given a pair of partitions $\lambda$ and $\mu,$ associate a monomial by  
\begin{align*}
  x_\lambda^\mu=\prod_{k\geq 1}x_{k,\lambda_k}^{\mu_k}.
\end{align*}
For partitions $\lambda,\mu$, write $\lambda\prec \mu$ if $\lambda \subseteq \mu$ and the partitions $\lambda$ and $\mu$ differ in precisely one coordinate. If they differ in the $r$th coordinate, let $\delta x(\mu,\lambda)$ denote the difference $x_{r,\mu_r}-x_{r,\lambda_r}$. Buck, Coley and Robbins \cite[Thm.~2]{MR1226346} proved the following nice product formula for the determinant with entries $x^\mu_{\lambda}.$

\begin{lemma}\label{lem:vandermonde}
  If $I\subset Y$ is a finite order ideal, then
  \begin{align*}
    \det(x_\lambda^\mu)_{\lambda,\mu\in I}=\pm\prod_{\lambda \in I}x_\lambda^{{\rm sh}(\lambda)}\prod_{\substack{ \lambda,\mu\in I\\\lambda \prec \mu}} \delta x(\mu,\lambda). 
  \end{align*}
\end{lemma}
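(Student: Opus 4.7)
The plan is to treat $D := \det(x_\lambda^\mu)_{\lambda,\mu \in I}$ as a polynomial in the indeterminates $\{x_{i,j}\}$ and identify its factorization following the classical Vandermonde template: locate the linear factors by root specialization, isolate the monomial factor by induction on $|I|$, and conclude by matching total degrees.

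For the linear factors, fix a covering pair $\lambda \prec \mu$ in $I$, differing in a single coordinate $r$. Specializing $x_{r,\mu_r}\mapsto x_{r,\lambda_r}$ makes rows $\lambda$ and $\mu$ of the matrix coincide, since for every column $\nu \in I$ both entries collapse to $\prod_k x_{k,\lambda_k}^{\nu_k}$. The factor theorem then gives $\delta x(\mu,\lambda) = x_{r,\mu_r} - x_{r,\lambda_r}\mid D$, and an accounting of multiplicities (in case several covering pairs happen to produce the same linear factor, in which event the specialization kills several pairs of rows simultaneously) shows that the full product $\prod_{\lambda \prec \mu}\delta x(\mu,\lambda)$ divides $D$.

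The monomial factor $M := \prod_\lambda x_\lambda^{{\rm sh}(\lambda)}$ is more delicate, as it does not factor out of any single row. I would extract it by induction on $|I|$: choose a maximal $\mu^* \in I$ and set $I' := I\setminus\{\mu^*\}$, still an order ideal. Over $\QQ(x_{i,j})$ solve for coefficients $c_\lambda$ ($\lambda \in I'$) so that $\mathrm{row}(\mu^*) - \sum_{\lambda \in I'} c_\lambda\,\mathrm{row}(\lambda)$ has a single nonzero entry in column $\mu^*$; by Cramer's rule, that entry equals $\det(M_I)/\det(M_{I'})$. Using the linear-factor step above together with the inductive hypothesis for $I'$, I would verify that this quotient has the required form $\pm\, x_{\mu^*}^{{\rm sh}(\mu^*)}\prod_{\lambda \prec \mu^*}\delta x(\mu^*,\lambda)$, which supplies both the new $\delta x$ factors involving $\mu^*$ and the monomial contribution $x_{\mu^*}^{{\rm sh}(\mu^*)}$. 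A degree check corroborates the formula: $D$ has total degree $\sum_\mu|\mu|$, while the RHS has degree $\sum_\lambda(|\lambda|-\lambda_1)$ from $M$ plus one per covering pair; a telescoping argument shows that, for each $\mu \in I$, the number of $\lambda \in I$ with $\lambda \prec \mu$ equals $\sum_r(\mu_r - \mu_{r+1}) = \mu_1$, so the covering pairs contribute $\sum_\mu \mu_1$, matching the shortfall $\sum_\lambda\lambda_1$ exactly. Hence $M\cdot \prod\delta x$ and $D$ have the same total degree, so their quotient is a nonzero constant, which is $\pm 1$ by comparing the coefficient of the diagonal monomial $\prod_\lambda x_\lambda^\lambda$ on both sides.

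The main obstacle is establishing that $M$ divides $D$: the linear factors emerge transparently from row coincidences under specializations, whereas $M$ is invisible at the level of individual rows and must be extracted either by the inductive/Cramer argument above (whose delicate point is verifying that the reduced entry has the claimed factored form) or by a direct monomial-by-monomial analysis of the determinant expansion exploiting the order-ideal property of $I$.
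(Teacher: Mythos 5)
The paper offers no proof of this lemma at all --- it is quoted verbatim from Buck, Coley and Robbins --- so your attempt has to stand on its own. The overall Vandermonde-style strategy is reasonable, and two of your ingredients do check out: the specialization $x_{r,\mu_r}\mapsto x_{r,\lambda_r}$ really does make rows $\lambda$ and $\mu$ coincide, and your degree count is correct --- for $\mu\in I$ the number of $\lambda\in I$ with $\lambda\prec\mu$ is $\sum_r(\mu_r-\mu_{r+1})=\mu_1$, precisely because $I$ is an order ideal. Be aware, though, that $\prec$ here is \emph{not} the covering relation of Young's lattice ($\mu_r-\lambda_r$ may exceed $1$); your count only comes out to $\mu_1$ under this coarser reading, so ``covering pair'' is a misnomer you have fortunately not acted on.

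There remain two genuine gaps. First, the multiplicity accounting for the linear factors is asserted, not proved, and it is needed: if, say, $(2)$ and $(2,1)$ both lie in $I$, then $(1)\prec(2)$ and $(1,1)\prec(2,1)$ each contribute the factor $x_{1,2}-x_{1,1}$. Making $m$ disjoint pairs of rows coincide only shows the determinant \emph{vanishes}; to get multiplicity $m$ you must observe that the rank of the specialized matrix drops by $m$ and invoke the standard fact that $\operatorname{rank}A(0)\le |I|-m$ forces $t^m\mid\det A(t)$. Second, and more seriously, the extraction of the monomial $M=\prod_\lambda x_\lambda^{{\rm sh}(\lambda)}$ --- which you yourself flag as the main obstacle --- is never actually performed. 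The Cramer's-rule setup only records the tautology $D_I=(\text{reduced entry})\cdot D_{I'}$; the claim that this reduced entry equals $\pm\, x_{\mu^*}^{{\rm sh}(\mu^*)}\prod_{\lambda\prec\mu^*}\delta x(\mu^*,\lambda)$ \emph{is} the inductive step of the lemma and is left unproved. Without it the degree count cannot close the argument: knowing that $\prod\delta x$ divides $D$ and that $\deg D=\deg(M\prod\delta x)$ only tells you that $D/\prod\delta x$ is \emph{some} polynomial of the same degree as $M$, not that it is $\pm M$. You need a genuine computation of that reduced entry (or an independent proof that $M\mid D$); the final normalization also needs care, since one must check that the monomial $\prod_\lambda x_\lambda^\lambda$ is contributed by a unique permutation before reading off the constant $\pm1$.
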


\begin{corollary}\label{cor:nonsingular}
 Let $I\subset Y$ denote the order ideal consisting of all partitions of integers less than $n$. The determinant with entries $(-1)^{|\mu|}t^{-\mu \cdot \nu+\sum_{j\geq 1}{\mu_j \choose 2}}$ for $\mu,\nu\in I$ is nonzero. 
\end{corollary}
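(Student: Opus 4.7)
The plan for Corollary \ref{cor:nonsingular} is to reduce it to a direct application of Lemma \ref{lem:vandermonde}. The main obstacle is that the given entry $(-1)^{|\mu|}t^{-\mu\cdot\nu + \sum_j \binom{\mu_j}{2}}$ does not immediately fit the product form $x_\nu^\mu = \prod_k x_{k,\nu_k}^{\mu_k}$ required by the lemma, because the $\binom{\mu_j}{2}$ contribution to the exponent is quadratic in $\mu_j$ rather than linear and hence cannot be absorbed into any substitution of that shape.

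To get around this, I would observe that $(-1)^{|\mu|}t^{\sum_j \binom{\mu_j}{2}}$ depends only on the row index $\mu$ and is a nonzero element of $\mathbb{Q}(t)$, so pulling it out of each row only changes the determinant by a nonzero scalar. What remains is the matrix with entries $t^{-\mu\cdot\nu} = \prod_{k\geq 1}(t^{-\nu_k})^{\mu_k}$, and this now has exactly the required form under the substitution $x_{k,j} := t^{-j}$, for which $x_\nu^\mu = t^{-\mu\cdot\nu}$.

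Having done this, I would apply Lemma \ref{lem:vandermonde} to the finite order ideal $I\subset Y$ consisting of all partitions of integers less than $n$ (closure under $\subseteq$ is immediate since $\lambda\subseteq\mu$ implies $|\lambda|\leq|\mu|$). The lemma gives the factorization $\pm\prod_{\lambda\in I}x_\lambda^{{\rm sh}(\lambda)}\prod_{\lambda\prec\mu}\delta x(\mu,\lambda)$. It remains to confirm each factor is nonzero in $\mathbb{Q}(t)$: each monomial $x_\lambda^{{\rm sh}(\lambda)}=t^{-\sum_k \lambda_k\lambda_{k+1}}$ is clearly nonzero, and for a covering pair $\lambda\prec\mu$ differing in coordinate $r$ we have $\delta x(\mu,\lambda)=t^{-\mu_r}-t^{-\lambda_r}$, which is a nonzero Laurent polynomial because $\mu_r\neq\lambda_r$. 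Since $\mathbb{Q}(t)$ is an integral domain, the determinant is nonzero, as required.
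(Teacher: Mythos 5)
Your proof is correct and follows essentially the same route as the paper: factor out the row-dependent term $(-1)^{|\mu|}t^{\sum_j\binom{\mu_j}{2}}$, reduce to $\det(t^{-\mu\cdot\nu})$, and apply Lemma \ref{lem:vandermonde} with the substitution $x_{k,j}=t^{-j}$, checking that each factor in the resulting product is a nonzero element of $\QQ(t)$. The only difference is that you spell out the row-factor extraction and the nonvanishing of $x_\lambda^{{\rm sh}(\lambda)}$ explicitly, which the paper leaves implicit.
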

\begin{proof}
It suffices to prove that the determinant $\det(t^{-\mu\cdot \nu})_{\mu,\nu\in I}$ is nonzero. If we set $x_{i,j}=t^{-j}$  in Lemma \ref{lem:vandermonde}, then $x_{\nu}^\mu=t^{-\mu\cdot \nu}$ which is nonzero. For $\mu \prec \nu$, the quantity $\delta x(\mu,\nu)$ is also nonzero, being a difference of distinct integer powers of $t$. The result follows from these observations.
\end{proof}

\begin{lemma}\label{lem:psisum}
For $\psi_{\eta/\mu}(t)$ defined in Equation \eqref{eq:uniqsol}, we have
  \begin{align*}
    &\sum_{\mu}(-1)^{|\mu|}t^{-\mu \cdot \nu+\sum_{j\geq 1}{\mu_j \choose 2}}\psi_{\eta/\mu}(t)\\
    &\qquad\qquad\qquad=(-1)^{|\eta|}(1-t)^{\eta_1}t^{-\eta\cdot \nu+\sum_{j \geq 1}{\eta_j \choose 2}}\prod_{i\geq 1}{\nu_i-\eta_{i+1} \brack \eta_i-\eta_{i+1}}_t[\eta_i-\eta_{i+1}]_t!,
  \end{align*}
  where the sum is taken over all partitions $\mu$ such that $\eta/\mu$ is a horizontal strip. 
\end{lemma}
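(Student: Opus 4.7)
The plan is to convert the sum over partitions $\mu$ into a product of single-variable sums and apply the $q$-binomial theorem to each factor. Reparameterize by $k_i := \mu_i - \eta_{i+1}$ and $a_i := \eta_i - \eta_{i+1}$; the horizontal-strip condition $\eta_{i+1} \leq \mu_i \leq \eta_i$ becomes $0 \leq k_i \leq a_i$, any such choice of $k_i$'s yields a partition, and $\psi_{\eta/\mu}(t) = \prod_i {a_i \brack k_i}_t$. Expanding ${\mu_j \choose 2} = {\eta_{j+1} \choose 2} + \eta_{j+1} k_j + {k_j \choose 2}$ and $\mu\cdot\nu = \sum_i \eta_{i+1}\nu_i + \sum_i k_i \nu_i$ shows that the summand factors completely across $i$, so the left-hand side becomes
$$(-1)^{|\eta|-\eta_1}\, t^{\sum_{j\geq 2}{\eta_j \choose 2} - \sum_i \eta_{i+1}\nu_i} \prod_i S_i(t),\quad S_i(t) := \sum_{k=0}^{a_i}(-1)^{k}\, t^{k \choose 2}{a_i \brack k}_t\, z_i^{k},$$
with $z_i := t^{\eta_{i+1}-\nu_i}$.

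The next step is to recognize each $S_i$ as an instance of the $q$-binomial identity $\sum_{k=0}^{a}(-1)^k t^{k\choose 2}{a \brack k}_t z^k = \prod_{j=0}^{a-1}(1 - zt^j)$, giving $S_i(t) = \prod_{j=0}^{a_i-1}(1 - t^{j-b_i})$ with $b_i := \nu_i - \eta_{i+1}$. When $b_i \geq a_i$, every exponent $j - b_i$ is negative; applying $1 - t^{-m} = -t^{-m}(1-t)[m]_t$ termwise and collapsing the resulting product of $q$-integers into a $q$-factorial yields
$$S_i(t) = (-1)^{a_i}\, t^{{a_i \choose 2} - a_i b_i}\, (1-t)^{a_i}\, {b_i \brack a_i}_t\, [a_i]_t!,$$
which is precisely the $i$-th factor on the right-hand side of the lemma.

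What remains is arithmetic bookkeeping plus a boundary argument. Multiplying the $S_i$ together and combining with the prefactor, the signs combine via $(-1)^{|\eta|-\eta_1}(-1)^{\sum_i a_i} = (-1)^{|\eta|}$, the power of $1-t$ is $\sum_i a_i = \eta_1$, and the residual $t$-exponent reduces to $-\eta\cdot\nu + \sum_j {\eta_j \choose 2}$ via the telescoping identity ${\eta_i \choose 2} - {\eta_{i+1} \choose 2} = {a_i \choose 2} + a_i\eta_{i+1}$. The main obstacle is the case $b_i < a_i$ for some $i$, where the manipulation above is inapplicable. If $\eta_{i+1} \leq \nu_i < \eta_i$, then $b_i \in \{0,\ldots,a_i-1\}$, so the product defining $S_i$ contains the factor $1 - t^0 = 0$, and the corresponding factor ${\nu_i - \eta_{i+1} \brack \eta_i - \eta_{i+1}}_t$ on the right-hand side also vanishes. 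In the remaining subcase $\nu_i < \eta_{i+1}$, picking the largest index $i_0$ with $\nu_{i_0} < \eta_{i_0}$ and using that $\nu$ is a partition yields $\nu_{i_0} \geq \nu_{i_0+1} \geq \eta_{i_0+1}$, returning to the previous situation at $i = i_0$; both sides vanish here as well.
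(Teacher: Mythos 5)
Your proof is correct and is essentially the paper's argument in a different packaging: the paper proves the identity by induction on $\ell(\eta)$, peeling off the first part of $\mu$ and applying the $q$-binomial theorem at each step, which is exactly your complete factorization of the summand into the single-row sums $S_i$ carried out one factor at a time. Your version is arguably cleaner in that it makes the product structure explicit and, unlike the paper, carefully treats the degenerate cases $b_i<a_i$ where the $q$-factorial manipulation breaks down, verifying that both sides vanish there.
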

\begin{proof}
  Let $s(\eta,\nu)$ denote the sum in the statement of the lemma. We proceed by induction on the length $\ell(\eta)$ of $\eta$. When $\eta$ is the empty partition, it is easily seen that both sides of the identity reduce to 1 and the result holds in this case. Now suppose the result is true for $\ell(\eta)=\ell-1$ and let $\eta$ be a partition with $\ell$ parts. By conditioning on the first part $k$ of $\mu$, write the sum $s(\eta,\nu)$ as
  \begin{align*}
    s(\eta,\nu)&=\sum_{\mu}(-1)^{|\mu|}t^{-\mu \cdot \nu+\sum_{j\geq 1}{\mu_j \choose 2}}\prod_{i\geq 1}{\eta_i-\eta_{i+1}\brack \eta_i-\mu_i}_t\\
    &=\sum_{k=\eta_2}^{\eta_1}(-1)^k t^{-k\nu_1+{k \choose 2}}{\eta_1-\eta_2\brack \eta_1-k}_t\sum_{\tilde{\mu}}(-1)^{|\tilde{\mu}|}t^{-\tilde{\mu} \cdot \tilde{\nu}+\sum_{j\geq 1}{\tilde{\mu}_j \choose 2}}\psi_{\tilde{\eta}/\tilde{\mu}},
  \end{align*}
  where $\tilde{\lambda}$ denotes the partition obtained by removing the first part of $\lambda$.   The inner sum above is equal to $s(\tilde{\eta},\tilde{\nu})$. Therefore,
  \begin{align*}
    s(\eta,\nu)=    s(\tilde{\eta},\tilde{\nu})\sum_{k=\eta_2}^{\eta_1}(-1)^k t^{-k\nu_1+{k \choose 2}}{\eta_1-\eta_2\brack \eta_1-k}_t.
  \end{align*}
  With $j=k-\eta_2$, we obtain
  \begin{align}
    s(\eta,\nu)&=(-1)^{\eta_2}s(\tilde{\eta},\tilde{\nu})\sum_{j=0}^{\eta_1-\eta_2}(-1)^j  t^{-(j+\eta_2)\nu_1+{j+\eta_2 \choose 2}}{\eta_1-\eta_2\brack j}_t \notag \\
    &=(-1)^{\eta_2}t^{-\eta_2\nu_1+{\eta_2\choose 2}}s(\tilde{\eta},\tilde{\nu})\sum_{j=0}^{\eta_1-\eta_2}(-1)^j  t^{j(\eta_2-\nu_1)+{j \choose 2}}{\eta_1-\eta_2\brack j}_t. \label{eq:setanu}
  \end{align}
By applying the $q$-binomial theorem \cite[Eq. 1.87]{Stanley2012}
  \begin{align*}
    (1-x)(1-tx)\cdots (1-t^{n-1}x)=\sum_{j=0}^n (-1)^jt^{j \choose 2} {n \brack k}_tx^k,
  \end{align*}
  with $n=\eta_1-\eta_2$ and $x=t^{-(\nu_1-\eta_2)}$, the sum in Equation \eqref{eq:setanu} above becomes
  \begin{align*}
    &\left(1-\frac{1}{t^{\nu_1-\eta_2}}\right)\left(1-\frac{1}{t^{\nu_1-\eta_2-1}}\right)\cdots \left(1-\frac{1}{t^{\nu_1-\eta_1+1}}\right)\\
    &\quad =\frac{(t-1)^{\eta_1-\eta_2}}{t^{(\nu_1-\eta_2)(\eta_1-\eta_2)-{\eta_1-\eta_2 \choose 2}}}{\nu_1-\eta_2 \brack \eta_1-\eta_2}_t[\eta_1-\eta_2]_t!.
  \end{align*}
  By the inductive hypothesis,
  \begin{align*}
    s(\tilde{\eta},\tilde{\nu})=(-1)^{|\tilde{\eta}|}(1-t)^{\eta_2}t^{-\tilde{\eta}\cdot \tilde{\nu}+\sum_{j \geq 2}{{\eta}_j \choose 2}}\prod_{i\geq 2}{\nu_i-{\eta}_{i+1} \brack{\eta}_i-{\eta}_{i+1}}_t[{\eta}_i-{\eta}_{i+1}]_t!.\label{eq:tildas}
  \end{align*}
By substituting the above expression for $s(\tilde{\eta},\tilde{\nu})$ into Equation \eqref{eq:setanu}, a straightforward calculation shows that
  \begin{align*}
    s(\eta,\nu)&=(-1)^{\eta_2}t^{-\eta_2\nu_1+{\eta_2\choose 2}}s(\tilde{\eta},\tilde{\nu})\frac{(t-1)^{\eta_1-\eta_2}}{t^{(\nu_1-\eta_2)(\eta_1-\eta_2)-{\eta_1-\eta_2 \choose 2}}}{\nu_1-\eta_2 \brack \eta_1-\eta_2}_t[\eta_1-\eta_2]_t!\\
           &=      (-1)^{|\eta|}(1-t)^{\eta_1}t^{-\eta\cdot \nu+\sum_{j \geq 1}{\eta_j \choose 2}}\prod_{i\geq 1}{\nu_i-\eta_{i+1} \brack \eta_i-\eta_{i+1}}_t[\eta_i-\eta_{i+1}]_t!,
  \end{align*}
completing the inductive step and the proof.
\end{proof}

\begin{corollary}\label{cor:psisatisfies}
 For partitions $\nu,\eta$ satisfying  $|\nu|<|\eta|=n$, we have
  $$
  \sum_{\mu:|\mu|\leq n}(-1)^{|\mu|}t^{-\mu \cdot \nu+\sum_{j\geq 1}{\mu_j \choose 2}}\psi_{\eta/\mu}(t)=0.  
  $$
\end{corollary}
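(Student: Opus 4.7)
The plan is to derive the corollary as a direct consequence of Lemma \ref{lem:psisum}. First, I would note that $\psi_{\eta/\mu}(t)=\prod_{i\geq 1}{\eta_i-\eta_{i+1}\brack \eta_i-\mu_i}_t$ vanishes unless $\eta/\mu$ is a horizontal strip, since the factor ${\eta_i-\eta_{i+1}\brack \eta_i-\mu_i}_t$ is zero unless $\eta_{i+1}\leq \mu_i\leq \eta_i$. In particular every nonzero term in the sum has $\mu\subseteq \eta$, hence $|\mu|\leq n$, so the sum in the corollary coincides with the sum in Lemma \ref{lem:psisum}. Applying that lemma, the left-hand side of the corollary equals
\[
(-1)^{|\eta|}(1-t)^{\eta_1}t^{-\eta\cdot \nu+\sum_{j \geq 1}{\eta_j \choose 2}}\prod_{i\geq 1}{\nu_i-\eta_{i+1} \brack \eta_i-\eta_{i+1}}_t[\eta_i-\eta_{i+1}]_t!.
\]

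Next, I would show that this expression vanishes whenever $|\nu|<|\eta|$. Since each $[\eta_i-\eta_{i+1}]_t!$ is a nonzero $t$-factorial, it suffices to exhibit some index $i$ for which the $t$-binomial coefficient ${\nu_i-\eta_{i+1} \brack \eta_i-\eta_{i+1}}_t$ is zero. Under the standard convention ${a\brack b}_t=0$ whenever $b>a$ (including the case $a<0$), this coefficient vanishes precisely when $\nu_i-\eta_{i+1}<\eta_i-\eta_{i+1}$, that is, when $\nu_i<\eta_i$ (this is valid both when $\eta_i>\eta_{i+1}$ and when $\eta_i=\eta_{i+1}$, the latter giving ${\nu_i-\eta_i\brack 0}_t$ which equals $1$ if $\nu_i\geq \eta_i$ and $0$ otherwise).

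Finally, I would invoke the pigeonhole observation: if $\nu_i\geq \eta_i$ for every $i\geq 1$ (extending $\nu$ by zeros if needed), then $|\nu|=\sum_i \nu_i\geq \sum_i \eta_i=|\eta|$, contradicting the hypothesis $|\nu|<|\eta|$. Hence there exists some $i$ with $\nu_i<\eta_i$, the corresponding $t$-binomial in the product vanishes, and the whole expression is zero, completing the proof.

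There is no real obstacle here: once Lemma \ref{lem:psisum} is in hand, the argument reduces to checking that the explicit product evaluated in that lemma contains a vanishing $t$-binomial factor, and the only subtle point is the cosmetic bookkeeping with the convention ${a\brack b}_t=0$ for $a<b$ to handle both $\eta_i>\eta_{i+1}$ and $\eta_i=\eta_{i+1}$ uniformly.
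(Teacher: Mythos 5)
Your proposal is correct and follows exactly the paper's route: apply Lemma \ref{lem:psisum} and observe that the factor $\prod_{i\geq 1}{\nu_i-\eta_{i+1} \brack \eta_i-\eta_{i+1}}_t$ vanishes unless $\nu_i\geq\eta_i$ for all $i$, which is impossible when $|\nu|<|\eta|$. You merely spell out the details (the horizontal-strip support of $\psi_{\eta/\mu}$ and the pigeonhole step) that the paper leaves implicit.
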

\begin{proof}
  Follows from Lemma \ref{lem:psisum} since the product $\prod_{i\geq 1}{\nu_i-\eta_{i+1} \brack \eta_i-\eta_{i+1}}_t$ 
  vanishes unless $\nu_i\geq \eta_i$ for each $i\geq 1$.
\end{proof}
\section{Partial profiles and anti-invariant subspaces}\label{sec:partialprofiles}
\begin{definition}
Given a matrix $\Delta \in \MM_n(\Fq)$, a subspace $W\subseteq \Fq^n$ has partial $\Delta$-profile $\rho=(\rho_1,\rho_2,\ldots,\rho_r)$ if  
\begin{align*}
  \dim (W+\Delta W+\cdots +\Delta^{j-1}W)=\rho_1+\rho_2+\cdots+\rho_{j} \mbox{ for }1\leq j\leq r.
\end{align*}
\end{definition}
Let $\pi(\rho,\Delta)$ denote the number of subspaces with partial $\Delta$-profile $\rho$. 

\begin{example}
We have $\pi((m),\Delta)={n \brack m}_q$ for each $\Delta\in \MM_n(\Fq)$ and $m\geq 0$.
\end{example}
Note that $\pi((m,0),\Delta)$ is the number of $m$ dimensional $\Delta$-invariant subspaces which is distinct from $\pi((m),\Delta)$ in general. Therefore, we think of $\rho$ as a tuple rather than an integer partition. It is evident that the number of subspaces with $\Delta$-profile $\mu=(\mu_1,\ldots,\mu_k)$ equals the number of subspaces with partial $\Delta$-profile $(\mu_1,\ldots,\mu_k,0)$.  

\begin{definition}\cite[p. 1]{MR2013452}
  Given $\Delta\in \MM_n(\Fq)$ and a positive integer $t$, a subspace $W$ of $\Fq^n$ is said to be $t$-fold $\Delta$-anti-invariant if
  \begin{align*}
    \dim(W+\Delta W+\cdots+\Delta^t W)=(t+1)\dim W.
  \end{align*}
\end{definition}
Thus an $m$-dimensional $\Delta$-anti-invariant subspace is precisely one with partial $\Delta$-profile $(m^{t+1})$. Anti-invariant subspaces were originally defined with $t=1$ by Barría and Halmos~\cite{MR748946}, motivated by earlier work of Hadwin, Nordgren, Radjavi and Rosenthal \cite{MR537082} on the weak density of certain sets of operators on Banach spaces. Barría and Halmos determined the maximum possible dimension of an anti-invariant subspace. Another proof of their result was given by Sourour \cite{MR822138}. This result was extended to arbitrary $t$ by Knüppel and Nielsen \cite{MR2013452}.  

\begin{theorem}
  Given $\rho=(\rho_1,\ldots,\rho_r)$ with $\rho_r\neq 0$ and $\Delta\in \MM_n(\Fq)$, the number of subspaces with partial $\Delta$-profile $\rho$ is given by
  \begin{align*}
    \pi(\rho,\Delta)=\langle \omega F_\Delta, G_\rho\rangle,
  \end{align*}
  where
  \begin{align*}
    G_\rho=(-1)^{\sum_{j\geq 2}\rho_j}q^{\sum_{j\geq 2}{\rho_j\choose 2}}\sum_{\substack{\eta \vdash n\\ \ell(\eta)=r}}\psi_{\eta/\rho}(q)P_{\eta'}(x;q).
  \end{align*}
\end{theorem}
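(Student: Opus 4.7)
The plan is to reduce $\pi(\rho,\Delta)$ to the full-profile formula of Theorem~\ref{thm:allprofiles} by summing over all completions of $\rho$, and then to collapse the resulting double sum using the specialization $\nu=\emptyset$ of Lemma~\ref{lem:psisum}.

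Since $\rho_r\neq 0$, a subspace has partial $\Delta$-profile $\rho=(\rho_1,\ldots,\rho_r)$ if and only if its full $\Delta$-profile $\mu$ satisfies $\mu_i=\rho_i$ for $1\leq i\leq r$ (the bound $\mu_{r+1}\leq \rho_r$ being automatic from $\mu$ being a partition), so I would write $\pi(\rho,\Delta)=\sum_\mu \sigma(\mu,\Delta)$ with $\mu$ ranging over all such extensions. Plugging in Theorem~\ref{thm:allprofiles}, using $\epsilon_{\mu'}=(-1)^{\sum_{j\geq 2}\mu_j}$, and splitting both the sign and the power of $q$ into a $\rho$-part (which is exactly the prefactor $c_\rho=(-1)^{\sum_{j\geq 2}\rho_j}q^{\sum_{j\geq 2}\binom{\rho_j}{2}}$ appearing in $G_\rho$) and a tail part, I would then apply the Pieri-type expansion $\widetilde{W}_\mu h_{n-|\mu|}=\sum_{\eta\vdash n}\psi_{\eta/\mu}(q)\widetilde{W}_\eta$ used in the proof of Theorem~\ref{thm:allprofiles} and swap the order of summation to arrive at
\[
  \pi(\rho,\Delta)=c_\rho\sum_{\eta\vdash n}\langle F_\Delta,\widetilde{W}_\eta\rangle\, S(\eta,\rho),\qquad S(\eta,\rho):=\sum_{\mu}(-1)^{\sum_{j>r}\mu_j}q^{\sum_{j>r}\binom{\mu_j}{2}}\psi_{\eta/\mu}(q).
\]

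The heart of the argument is evaluating $S(\eta,\rho)$. Since $\psi_{\eta/\mu}(q)=\prod_{i\geq 1}\genfrac{[}{]}{0pt}{}{\eta_i-\eta_{i+1}}{\eta_i-\mu_i}_q$, it factors as a prefix $\prod_{i=1}^{r}\genfrac{[}{]}{0pt}{}{\eta_i-\eta_{i+1}}{\eta_i-\rho_i}_q$ depending only on $\rho$ times $\psi_{\tilde\eta/\tilde\mu}(q)$, where $\tilde\eta=(\eta_{r+1},\eta_{r+2},\ldots)$ and $\tilde\mu=(\mu_{r+1},\mu_{r+2},\ldots)$. The boundary constraint $\tilde\mu_1\leq \rho_r$ inherited from $\mu$ being a partition is either automatic (when $\eta_{r+1}\leq\rho_r$, since then $\tilde\mu_1\leq\tilde\eta_1\leq \rho_r$) or the prefix already vanishes (when $\eta_{r+1}>\rho_r$, the factor $\genfrac{[}{]}{0pt}{}{\eta_r-\eta_{r+1}}{\eta_r-\rho_r}_q$ is zero), so the tail sum over $\tilde\mu$ ranges freely. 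This tail sum is exactly the $\nu=\emptyset$ specialization of Lemma~\ref{lem:psisum}, and the resulting closed form contains the product $\prod_{i\geq 1}\genfrac{[}{]}{0pt}{}{-\tilde\eta_{i+1}}{\tilde\eta_i-\tilde\eta_{i+1}}_q$, in which the factor at $i=\ell(\tilde\eta)$ is $\genfrac{[}{]}{0pt}{}{0}{\tilde\eta_{\ell(\tilde\eta)}}_q=0$ whenever $\tilde\eta\neq\emptyset$. Hence the tail equals $1$ when $\ell(\eta)\leq r$ and $0$ otherwise, so $S(\eta,\rho)=\psi_{\eta/\rho}(q)$ when $\ell(\eta)=r$ and $S(\eta,\rho)=0$ in all other cases (noting that $\ell(\eta)<r$ already forces the prefix, and $\psi_{\eta/\rho}(q)$ itself, to vanish because $\rho_r>0=\eta_r$).

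Finally, the self-adjointness of $\omega$ with respect to the Hall scalar product together with $\omega P_{\eta'}=\widetilde{W}_\eta$ from Equation~\eqref{eq:wisomegap} allows me to rewrite
\[
  c_\rho\sum_{\substack{\eta\vdash n\\ \ell(\eta)=r}}\psi_{\eta/\rho}(q)\,\langle F_\Delta,\widetilde{W}_\eta\rangle=\Big\langle \omega F_\Delta,\,c_\rho\sum_{\substack{\eta\vdash n\\ \ell(\eta)=r}}\psi_{\eta/\rho}(q)\,P_{\eta'}\Big\rangle=\langle \omega F_\Delta,G_\rho\rangle,
\]
yielding the claimed formula. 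The main obstacle is the tail evaluation: once one recognizes the tail sum as an instance of Lemma~\ref{lem:psisum} at the empty partition, the combinatorics fall into place, but doing so requires the careful check above that the partition boundary constraint $\tilde\mu_1\leq \rho_r$ never actually restricts the range of summation on the cases where the prefix is nonzero.
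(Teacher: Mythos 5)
Your proposal is correct and follows essentially the same route as the paper: sum $\sigma(\mu,\Delta)$ over all full profiles extending $\rho$, substitute the formula of Theorem~\ref{thm:allprofiles}, expand via the Pieri rule, swap summation, and kill the tail sum using the $\nu=\emptyset$ case of Lemma~\ref{lem:psisum} (the paper invokes Corollary~\ref{cor:psisatisfies} for this, and applies $\omega$ at the outset rather than at the end). Your explicit verification that the constraint $\mu_{r+1}\leq\rho_r$ never restricts the tail summation on terms where the prefix is nonzero is a point the paper passes over silently, and is a welcome addition.
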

\begin{proof}
We have
  \begin{align*}
    \pi(\rho,\Delta)&=\sum_{\substack{\mu:|\mu|\leq n\\ \mu_i=\rho_i (1\leq i\leq r)}}\sigma(\mu,\Delta)\\
    &=\sum_{\substack{\mu:|\mu|\leq n\\ \mu_i=\rho_i (1\leq i\leq r)}}(-1)^{\sum_{j\geq 2}\mu_j}\, q^{\sum_{j\geq 2}{\mu_j \choose 2}}\langle \omega  F_\Delta,P_{\mu'}(x;q)\, e_{n-|\mu|} \rangle,
  \end{align*}
  by Theorem \ref{thm:allprofiles}. Therefore $\pi(\rho,\Delta)=\langle \omega F_\Delta,G_\rho\rangle,$ where
  \begin{align*}
    G_\rho=\sum_{\substack{\mu:|\mu|\leq n\\ \mu_i=\rho_i (1\leq i\leq r)}}(-1)^{\sum_{j\geq 2}\mu_j}\, q^{\sum_{j\geq 2}{\mu_j \choose 2}}P_{\mu'}(x;q)\, e_{n-|\mu|}.
  \end{align*}
  By the Pieri rule for Hall-Littlewood polynomials, we have $P_{\mu'}e_{n-|\mu|}=\sum_{\eta}\psi_{\eta/\mu}P_{\eta'}.$ It follows that
  \begin{align*}
    G_\rho&=\sum_{\substack{\mu:|\mu|\leq n\\ \mu_i=\rho_i (1\leq i\leq r)}}(-1)^{\sum_{j\geq 2}\mu_j}\, q^{\sum_{j\geq 2}{\mu_j \choose 2}} \sum_{\eta\vdash n}\psi_{\eta/\mu}(q)P_{\eta'}(x;q)\\
    &=\sum_{\eta\vdash n}\sum_{\substack{\mu:|\mu|\leq n\\ \mu_i=\rho_i (1\leq i\leq r)}}(-1)^{\sum_{j\geq 2}\mu_j}\, q^{\sum_{j\geq 2}{\mu_j \choose 2}} \psi_{\eta/\mu}(q) P_{\eta'}(x;q).
  \end{align*}
  For each partition $\lambda$, define $\hat{\lambda}=(\lambda_1,\ldots,\lambda_r)$ and $\bar{\lambda}=(\lambda_{r+1},\lambda_{r+2},\ldots).$ Then the coefficient of $P_{\eta'}(x;q)$ in the sum above equals
  \begin{align*}
    &\sum_{\substack{\mu:|\mu|\leq n\\ \mu_i=\rho_i (1\leq i\leq r)}}(-1)^{\sum_{j\geq 2}\mu_j}\, q^{\sum_{j\geq 2}{\mu_j \choose 2}} \psi_{\eta/\mu}(q) \\
    &\qquad=(-1)^{\sum_{j\geq 2}\rho_j}\, q^{\sum_{j\geq 2}{\rho_j \choose 2}}\psi_{\hat{\eta}/\rho}(q)    \sum_{\bar{\mu}:|\bar{\mu}|\leq n-|\rho|}(-1)^{\sum_{j\geq 1}\bar{\mu}_j}\, q^{\sum_{j\geq 1}{\bar{\mu}_j \choose 2}} \psi_{\bar{\eta}/\bar{\mu}}(q).
  \end{align*}
  If $\bar{\eta}$ is not the empty partition, then the sum in the last expression above vanishes by Corollary \ref{cor:psisatisfies} with $\nu=\emptyset$. On the other hand, if $\ell(\eta)=r$, then it is easily seen that the inner sum above equals 1. It follows that
  \begin{align*}
     G_\rho=(-1)^{\sum_{j\geq 2}\rho_j}q^{\sum_{j\geq 2}{\rho_j\choose 2}}\sum_{\substack{\eta \vdash n\\ \ell(\eta)=r}}\psi_{\eta/\rho}(q)P_{\eta'}(x;q),
  \end{align*}
  proving the result.
\end{proof}
\begin{corollary}
  Given $\Delta\in \MM_n(\Fq)$ and a positive integer $t$, the number of $t$-fold $\Delta$-anti-invariant subspaces of dimension $m$ is given by
  \begin{align*}
(-1)^{mt}q^{t{m \choose 2}} \langle \omega F_\Delta,P_{((t+1)^m,1^{n-m(t+1)})}(x;q) \rangle,
  \end{align*}
  for $m(t+1)\leq n$.
\end{corollary}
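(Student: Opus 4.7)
The plan is to apply the preceding theorem to the tuple $\rho=(m^{t+1})$ and show that the sum defining $G_\rho$ collapses to a single term. By definition, an $m$-dimensional $t$-fold $\Delta$-anti-invariant subspace is exactly one with partial $\Delta$-profile $\rho=(m,m,\ldots,m)$ (with $t+1$ entries), since $\dim(W+\Delta W+\cdots+\Delta^j W)=(j+1)m$ for $0\leq j\leq t$. Hence the count is $\pi(\rho,\Delta)=\langle \omega F_\Delta,G_\rho\rangle$, and the prefactor immediately yields $(-1)^{\sum_{j\geq 2}\rho_j}=(-1)^{mt}$ and $q^{\sum_{j\geq 2}\binom{\rho_j}{2}}=q^{t\binom{m}{2}}$.

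Next I would identify the partitions $\eta\vdash n$ of length exactly $t+1$ with $\eta/\rho$ a horizontal strip. The horizontal strip condition $\eta_i\geq\rho_i\geq\eta_{i+1}$ combined with $\rho_i=m$ for $1\leq i\leq t+1$ and $\rho_i=0$ for $i\geq t+2$ forces $\eta_i=m$ for $2\leq i\leq t+1$ and $\eta_i=0$ for $i\geq t+2$. The constraint $\ell(\eta)=t+1$ is then automatic. Since $|\eta|=n$, this determines $\eta_1=n-tm$ uniquely, so $\eta=(n-tm,m^t)$.

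Conjugating: for $1\leq j\leq m$ all $t+1$ parts are $\geq j$, so $\eta'_j=t+1$; for $m<j\leq n-tm$ only the first part is $\geq j$, so $\eta'_j=1$. Therefore $\eta'=((t+1)^m,1^{n-m(t+1)})$, matching the partition in the corollary. A quick check of
\begin{align*}
\psi_{\eta/\rho}(q)=\prod_{i\geq 1}{\eta_i-\eta_{i+1}\brack \eta_i-\rho_i}_q
\end{align*}
shows that every factor is $1$: the $i=1$ factor is $\binom{n-(t+1)m}{n-(t+1)m}_q=1$, the factors for $2\leq i\leq t$ are $\binom{0}{0}_q$, the factor for $i=t+1$ is $\binom{m}{0}_q=1$, and all later factors are trivial.

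The only mild obstacle is the combinatorial uniqueness argument in the previous paragraph, which must be done carefully to rule out stray $\eta$. Putting the pieces together,
\begin{align*}
G_\rho&=(-1)^{mt}q^{t\binom{m}{2}}\,P_{((t+1)^m,1^{n-m(t+1)})}(x;q),
\end{align*}
and hence
\begin{align*}
\pi((m^{t+1}),\Delta)&=(-1)^{mt}q^{t\binom{m}{2}}\langle \omega F_\Delta,P_{((t+1)^m,1^{n-m(t+1)})}(x;q)\rangle,
\end{align*}
as claimed.
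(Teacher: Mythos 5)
Your proposal is correct and follows the same route as the paper: apply the partial-profile theorem to $\rho=(m^{t+1})$, observe that $\eta=(n-tm,m^t)$ is the unique admissible partition, verify $\eta'=((t+1)^m,1^{n-m(t+1)})$, and check $\psi_{\eta/\rho}(q)=1$. One minor logical inversion: the horizontal-strip condition by itself only forces $\eta_i=0$ for $i\geq t+3$ (it still allows $0\leq \eta_{t+2}\leq m$); it is the restriction $\ell(\eta)=t+1$ in the sum defining $G_\rho$ that rules out $\eta_{t+2}>0$, rather than that restriction being automatic.
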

\begin{proof}
  For $\rho=(m^{t+1})$, there is a unique partition $\eta$ of $n$ with $t+1$ parts such that $\eta/\rho$ is a horizontal strip, namely $\eta=( n-mt,m^t)$. In this case it is straightforward to verify that $\psi_{\eta/\rho}(q)=1$ and the result follows.
\end{proof}
\section{Application to Krylov subspace methods} 
\label{sec:krylov}
Let $\Delta\in \MM_n(\Fq)$ and consider a subset $S=\{v_1,\ldots,v_k\}$ of column vectors in $\Fq^n$. The \emph{truncated Krylov subspace} of order $\ell$ generated by $S$ is defined by
\begin{align*}
  {\rm Kry}(\Delta,S,\ell):=\left\{\sum_{i=1}^kf_i(\Delta)v_i:f_i(x)\in \Fq[x] \mbox{ and }\deg f_i<\ell \right\}.
\end{align*}
Let $\psi_{k,\ell}(\Delta)$ denote the probability of selecting a $k$-tuple of vectors uniformly at random from $\Fq^n$ such that the truncated Krylov subspace of order $\ell$ spanned by them is all of $\Fq^n$. Thus
\begin{equation} \label{eq:defpsi}
  \psi_{k,\ell}(\Delta):=\frac{1}{q^{nk}}|\{(v_1,\ldots,v_k)\in (\Fq^n)^k : {\rm Kry}(\Delta,\{v_1,\ldots,v_k\};\ell)=\Fq^n\}|.
\end{equation} 
 Computing $\psi_{k,\ell}(\Delta)$ is essential for analyzing a class of algorithms that solve large, sparse linear systems over finite fields, commonly encountered in number theory and computer algebra (Watkins \cite{MR2383888}). These algorithms, collectively referred to as Krylov subspace methods, have origins that can be traced back to contributions by Lagrange, Euler, Gauss, Hilbert and von Neumann, among others (Liesen and Strakoš \cite[p.\ 8]{MR3024841}). For instance, the linear algebra step in the Number Field Sieve, a well-known algorithm for large integer factorization, depends on Krylov subspace methods (Lenstra, Lenstra, Manasse and Pollard \cite{MR1321219}). Another example is Wiedemann’s algorithm, which is employed to determine the minimal polynomials of large matrices over finite fields (Liesen and Strakoš \cite[p.\ 19]{MR3024841}).  The quantity $\psi_{k,\ell}(\Delta)$ plays a key role in evaluating the effectiveness of these algorithms, and determining bounds on this probability represents a challenging and crucial task in the field (Brent, Gao and Lauder \cite[p.\ 277]{MR1982139}). We give an explicit formula for this probability as a scalar product of the flag generating function $F_{\Delta}$ with a suitably defined symmetric function.

\begin{theorem}\label{thm:krylov}
 For each matrix $\Delta\in \MM_n(\Fq)$, we have $\psi_{k,\ell}(\Delta)=\langle  F_\Delta,G(n,k,\ell)\rangle,$ where
  \begin{align*}
G(n,k,\ell)=    q^{-nk}\sum_{\substack{\mu\vdash n \\ \ell(\mu)\leq \ell }} (-1)^{n-\mu_1}(q-1)^{\mu_1}q^{\sum_{j\geq 1}{\mu_j \choose 2}}{k \brack \mu_1}_q [\mu_1]_q! \widetilde{W}_{\mu}(x;q).
  \end{align*}
\end{theorem}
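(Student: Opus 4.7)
The plan is to reduce the counting of generating $k$-tuples to the counting of subspaces with a prescribed $\Delta$-profile and then invoke Theorem \ref{thm:allprofiles}.

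First, I observe that the truncated Krylov subspace $\mathrm{Kry}(\Delta,S,\ell)$ depends on $S=\{v_1,\ldots,v_k\}$ only through its linear span $W=\mathrm{span}(v_1,\ldots,v_k)$. Indeed, $\mathrm{Kry}(\Delta,S,\ell)=W+\Delta W+\cdots+\Delta^{\ell-1}W$. If $W$ has $\Delta$-profile $\mu=(\mu_1,\mu_2,\ldots)$, then this sum has dimension $\mu_1+\cdots+\mu_\ell$. Hence the Krylov subspace is the full space $\Fq^n$ if and only if $|\mu|=n$ and $\ell(\mu)\le \ell$, in which case $\mu_1=\dim W$.

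Second, I would group the $k$-tuples by the subspace $W$ they span. The number of $k$-tuples $(v_1,\ldots,v_k)\in(\Fq^n)^k$ whose span equals a fixed $d$-dimensional subspace $W$ is the number of surjective $\Fq$-linear maps $\Fq^k\twoheadrightarrow W$, which equals $\prod_{i=0}^{d-1}(q^k-q^i)$. Writing $q^k-q^i=q^i(q^{k-i}-1)$ and using $q^{m}-1=(q-1)[m]_q$, this factors as
\begin{equation*}
\prod_{i=0}^{d-1}(q^k-q^i)=q^{\binom{d}{2}}(q-1)^{d}[d]_q!\,{k\brack d}_q.
\end{equation*}
Putting these two steps together yields
\begin{equation*}
q^{nk}\psi_{k,\ell}(\Delta)=\sum_{\substack{\mu\vdash n\\ \ell(\mu)\le \ell}}\sigma(\mu,\Delta)\,q^{\binom{\mu_1}{2}}(q-1)^{\mu_1}[\mu_1]_q!\,{k\brack \mu_1}_q.
\end{equation*}

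Third, I apply Theorem \ref{thm:allprofiles}. Since every $\mu$ appearing in the sum is a partition of $n$, we have $h_{n-|\mu|}=h_0=1$ and $\epsilon_{\mu'}=(-1)^{|\mu|-\ell(\mu')}=(-1)^{n-\mu_1}$, giving
\begin{equation*}
\sigma(\mu,\Delta)=(-1)^{n-\mu_1}q^{\sum_{j\ge 2}\binom{\mu_j}{2}}\,\langle F_\Delta,\widetilde{W}_\mu(x;q)\rangle.
\end{equation*}
Combining the exponent $q^{\binom{\mu_1}{2}}$ from the previous step with $q^{\sum_{j\ge 2}\binom{\mu_j}{2}}$ yields $q^{\sum_{j\ge 1}\binom{\mu_j}{2}}$, and dividing by $q^{nk}$ gives exactly $\langle F_\Delta,G(n,k,\ell)\rangle$ with the stated $G(n,k,\ell)$.

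There is no real obstacle here beyond identifying the correct reduction to subspace profiles; the main conceptual point is that the Krylov-spanning condition translates cleanly into the single condition that $W$ has a full profile of length at most $\ell$. The only bookkeeping care needed is in the surjection count and in matching the exponents of $q$ and $(q-1)$ with the formula for $G(n,k,\ell)$.
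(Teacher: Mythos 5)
Your proposal is correct and follows essentially the same route as the paper: group the $k$-tuples by their span $W$, count the surjections $\Fq^k\twoheadrightarrow W$ as $q^{\binom{d}{2}}(q-1)^d[d]_q!{k\brack d}_q$, observe that the Krylov condition is exactly that $W$ has a full profile with at most $\ell$ parts, and apply the main profile formula (your use of Theorem \ref{thm:allprofiles} with $h_0=1$ is the same as the paper's direct appeal to Theorem \ref{thm:fullprofiles1}). The bookkeeping of signs and $q$-powers matches the stated $G(n,k,\ell)$.
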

\begin{proof}
  The number of tuples $(v_1,\ldots,v_k)$ of vectors in $\Fq^n$ which span a fixed subspace of dimension $m$ is given by 
  \begin{align*}
    (q^k-1)(q^k-q)\cdots (q^k-q^{m-1})={k \brack m}_q(q-1)^mq^{m \choose 2}[m]_q!.
  \end{align*}
  If $W$ denotes the subspace spanned by $(v_1,\ldots,v_k)$, then ${\rm Kry}(\Delta,\{v_1,\ldots,v_k\},\ell)=\Fq^n$ precisely when the $\Delta$-profile of $W$ is a partition of $n$ with at most $\ell$ parts. It follows that
  \begin{align*}
    \psi_{k,\ell}(\Delta)&= q^{-nk}\sum_{m=0}^k {k \brack m}_q(q-1)^m q^{m \choose 2}[m]_q!\sum_{\substack{\mu\vdash n \\ \mu_1=m\\ \ell(\mu)\leq \ell }} \sigma(\mu,\Delta)\\
    &=q^{-nk}\sum_{m=0}^k {k \brack m}_q(q-1)^m q^{m \choose 2}[m]_q!\sum_{\substack{\mu\vdash n \\ \mu_1=m\\ \ell(\mu)\leq \ell }} \epsilon_{\mu'}q^{\sum_{j\geq 2}{\mu_j \choose 2}}\langle  F_\Delta,\widetilde{W}_{\mu}(x;q)\rangle,
  \end{align*}
by Theorem \ref{thm:fullprofiles1}. Therefore, $\psi_{k,\ell}(\Delta)=\langle F_{\Delta},G(n,k,\ell)\rangle,$ where
  \begin{align*}
    G(n,k,\ell)&=q^{-nk}\sum_{m=0}^k {k \brack m}_q(q-1)^m q^{m \choose 2}[m]_q!\sum_{\substack{\mu\vdash n \\ \mu_1=m\\ \ell(\mu)\leq \ell }} \epsilon_{\mu'}q^{\sum_{j\geq 2}{\mu_j \choose 2}}\widetilde{W}_{\mu}(x;q)\\
    &=q^{-nk}\sum_{\substack{\mu\vdash n \\ \ell(\mu)\leq \ell }} (-1)^{n-\mu_1}(q-1)^{\mu_1}q^{\sum_{j\geq 1}{\mu_j \choose 2}}{k \brack \mu_1}_q [\mu_1]_q! \widetilde{W}_{\mu}(x;q).\qedhere
  \end{align*}
\end{proof}
\section{Recent developments}
Denote by $[n]$ the set of the first $n$ positive integers. A Hessenberg function is a weakly increasing function $\mm:[n]\to [n]$ satisfying  $\mm(i)\geq i$ for each $ i\in [n]$. For a linear operator $\Delta$ on $\Fq^n,$ the Hessenberg variety is defined by
\begin{align*}
  \he(\mm,\Delta):= \{\text{complete flags } V_1\subseteq V_2\subseteq \cdots \subseteq V_n=\Fq^n: \Delta V_i\subseteq V_{\mm(i)}\text{ for } i\in [n]\}.
\end{align*}
Drawing on results in this paper, the following theorem was proved in joint work with Abreu and Nigro \cite{abreu2024}. 
\begin{theorem}
For each operator $\Delta$ on $\Fq^n$, the number of $\Fq$-rational points on the Hessenberg variety $\he(\mm,\Delta)$ is given by
  $$
|\he(\mm,\Delta)|=\langle F_\Delta, \omega X_{G(\mm)}(x;q)\rangle,
$$
where $X_{G(\mm)}(x;t)$ denotes the chromatic quasisymmetric function of the unit interval graph $G(\mm)$ associated to $\mm$.
\end{theorem}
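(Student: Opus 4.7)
The plan is to reduce the $\Fq$-point count of $\he(\mm,\Delta)$ to a weighted sum over flags of $\Delta$-invariant subspaces, and then to match those weights with the $h$-expansion of $\omega X_{G(\mm)}(x;q)$. In spirit this extends to arbitrary $\Delta$ the Shareshian--Wachs correspondence (a theorem of Brosnan--Chow and Guay--Paquet), which relates the chromatic quasisymmetric function to the regular semisimple Hessenberg variety.

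The first step is to stratify $\he(\mm,\Delta)$ by associating to each complete Hessenberg flag $V_\bullet = (V_1\subseteq \cdots \subseteq V_n)$ a canonical flag $W_\bullet$ of $\Delta$-invariant subspaces, together with the weak composition $\alpha$ given by $\alpha_i = \dim W_i - \dim W_{i-1}$. A natural construction is to let $W_i$ be built inductively from $W_{i-1}$ and $V_i$ by iterated application of $\Delta$, truncated consistently with the Hessenberg constraint $\Delta V_j \subseteq V_{\mm(j)}$. The crucial claim is that, once an invariant flag $W_\bullet$ of shape $\alpha$ is fixed, the fibre $\{V_\bullet \in \he(\mm,\Delta) : W(V_\bullet) = W_\bullet\}$ has cardinality $c_{\mm,\alpha}(q)$ depending only on $\mm$ and $\alpha$ (and not on $\Delta$ or on the particular $W_\bullet$). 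Summing over all invariant flags, and sorting by shape via Proposition~\ref{prop:sortparts}, one then obtains
\begin{equation*}
|\he(\mm,\Delta)| \;=\; \sum_{\lambda}\, c_{\mm,\lambda}(q)\, X_\lambda(\Delta).
\end{equation*}

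Since $F_\Delta = \sum_\lambda X_\lambda(\Delta)\, m_\lambda$ and $\langle m_\lambda, h_\mu\rangle = \delta_{\lambda\mu}$, the theorem is then equivalent to the $\Delta$-free symmetric function identity
\begin{equation*}
\omega X_{G(\mm)}(x;q) \;=\; \sum_\lambda c_{\mm,\lambda}(q)\, h_\lambda, \qquad \text{equivalently,} \qquad [e_\lambda]\, X_{G(\mm)}(x;q) \;=\; c_{\mm,\lambda}(q).
\end{equation*}
This last step is the main obstacle: one must match the local Hessenberg completion counts with the $e$-expansion coefficients of the chromatic quasisymmetric function (the key object in the Stanley--Stembridge conjecture). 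I would attack this using an explicit combinatorial formula for the $e$-expansion of $X_{G(\mm)}(x;q)$ (such as those of Athanasiadis, Shareshian--Wachs, or Abreu--Nigro), building an explicit bijection between the combinatorial data enumerating these coefficients and the local completions of an invariant flag of shape $\lambda$ to a Hessenberg flag.

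Two limiting cases provide useful sanity checks. When $\mm$ is the identity, the graph $G(\mm)$ has no edges, $X_{G(\mm)}(x;q) = p_1^n = h_{(1^n)}$, and the formula collapses to $X_{(1^n)}(\Delta)$, the number of complete $\Delta$-invariant flags; this is exactly $|\he(\mm,\Delta)|$ since the Hessenberg condition becomes $\Delta V_i \subseteq V_i$ for all $i$. At the opposite extreme $\mm(i) = n$, the graph is $K_n$, a short calculation gives $X_{K_n}(x;q) = [n]_q!\, e_n$, and the formula yields $[n]_q! \cdot X_{(n)}(\Delta) = [n]_q!$, matching the point count of the full flag variety.
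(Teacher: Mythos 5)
Your two limiting-case checks are correct, and the reduction of the theorem to the $\Delta$-free identity $[e_\lambda]X_{G(\mm)}(x;q)=c_{\mm,\lambda}(q)$ via $F_\Delta=\sum_\lambda X_\lambda(\Delta)m_\lambda$ and $\langle m_\lambda,h_\mu\rangle=\delta_{\lambda\mu}$ is the right way to read the statement. Note, however, that this paper contains no proof of the theorem (it is quoted from the companion work with Abreu and Nigro), so your plan must stand on its own, and as it stands both of its substantive steps are missing. The stratification step is a wish rather than a construction: you never define the map $V_\bullet\mapsto W_\bullet$, and the ``crucial claim'' that the fibre over an invariant flag of shape $\alpha$ has cardinality $c_{\mm,\alpha}(q)$ independent of $\Delta$ and of the particular flag is asserted without argument. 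That claim is where essentially all the content lies: for a general $\Delta$ the intervals $[W_{i-1},W_i]$ in $\mathcal{L}(\Delta)$ with a fixed quotient dimension $\alpha_i$ need not be isomorphic to one another (compare a simple primary component with a nilpotent one), so there is no a priori reason the number of Hessenberg completions should depend only on $\alpha$. Proposition \ref{prop:sortparts} only permutes the shape of an invariant flag; it gives no control over these fibres.

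The deferred final step is also not a routine verification. Matching $c_{\mm,\lambda}(q)$ with $[e_\lambda]X_{G(\mm)}(x;q)$ by ``an explicit combinatorial formula for the $e$-expansion'' has no available target: producing a manifestly nonnegative combinatorial formula for these coefficients is precisely the Shareshian--Wachs refinement of the Stanley--Stembridge problem, open for general $q$; the known formulas (e.g.\ via acyclic orientations with prescribed sinks) are either signed or determine only sums of coefficients over $\lambda$ of fixed length. A successful execution of your plan would in particular produce such a positive formula at prime powers, which strongly suggests the direct fibre-counting route is not viable. The proof in the cited work proceeds differently: it uses the Guay-Paquet/Abreu--Nigro characterization of $\mm\mapsto X_{G(\mm)}(x;q)$ by linear ``modular law'' relations together with multiplicativity over connected components, and shows that $\mm\mapsto|\he(\mm,\Delta)|$, packaged through $F_\Delta$ exactly as in your reduction, satisfies the same relations --- thereby never needing the $e$-coefficients explicitly. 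To salvage your approach you would either need to prove those relations directly for your counts (making the stratification unnecessary) or restrict to operators, such as diagonal or regular nilpotent ones, whose invariant-subspace lattices are rigid enough for the fibre claim to be verified by hand.
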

This result entails an expression for the Poincaré polynomials of complex Hessenberg varieties involving modified Hall-Littlewood polynomials.  
  
\section{Acknowledgments}
I thank Per Alexandersson, Amritanshu Prasad and Michael Schlosser for some helpful discussions. This research was partially supported by a MATRICS grant MTR/2017/000794 awarded by the Science and Engineering Research Board and an Indo-Russian project DST/INT/RUS/RSF/P41/2021.  
\printbibliography  
\end{document}